\newcounter{noindnum}[subsection]
\renewcommand{\thenoindnum}{\roman{noindnum}}
\newcommand{\noindstep}{\refstepcounter{noindnum}{{\rm(}\thenoindnum}\/{\rm)} }
\newcommand{\stepzero}{\setcounter{noindnum}{0}}
\newcommand{\EuD}{\EuScript D}
\newcommand{\For}{\mathrm{For}}
\newcommand{\Fib}{\mathrm{Fib}}
\newcommand{\Rat}{\mathrm{Rat}}
\renewcommand{\phi}{\varphi}
\renewcommand{\epsilon}{\varepsilon}
\renewcommand{\emptyset}{\varnothing}
\newcommand{\C}{\mathbb C}
\newcommand{\A}{\mathbb A}
\newcommand{\Q}{\mathbb Q}
\newcommand{\Z}{\mathbb{Z}}
\newcommand{\twext}{\mathrel{\tilde\boxtimes}}
\DeclareMathOperator{\Gr}{Gr}
\DeclareMathOperator{\gr}{gr}
\DeclareMathOperator{\supp}{supp}
\DeclareMathOperator{\Id}{Id}
\DeclareMathOperator{\Rep}{Rep}
\DeclareMathOperator{\MHM}{MHM}
\DeclareMathOperator{\Tate}{Tate}
\DeclareMathOperator{\MH}{MH}
\DeclareMathOperator{\FM}{FM}
\DeclareMathOperator{\VHS}{VHS}
\DeclareMathOperator{\DR}{DR}
\DeclareMathOperator{\Coh}{Coh}
\DeclareMathOperator{\Vect}{Vect}
\DeclareMathOperator{\Perv}{Perv}
\DeclareMathOperator{\Spec}{Spec}
\DeclareMathOperator{\Hom}{Hom}
\DeclareMathOperator{\St}{St}
\DeclareMathOperator{\Ext}{Ext}
\newcommand{\IC}{\mathrm{IC}}
\newcommand{\gm}[1]{\mathbf{G_{m,#1}}}
\newcommand{\cC}{\mathcal C}
\newcommand{\cE}{\mathcal E}
\newcommand{\cH}{\mathcal H}
\newcommand{\cM}{\mathcal M}
\newcommand{\cO}{\mathcal O}
\newcommand{\cK}{\mathcal K}
\newcommand{\cS}{\mathcal S}
\newcommand{\cF}{\mathcal F}
\newcommand{\cG}{\mathcal G}
\newcommand{\cV}{\mathcal V}
\theoremstyle{plain}
\newtheorem{theorem}{Theorem}
\newtheorem{proposition}{Proposition}[section]
\newtheorem{lemma}[proposition]{Lemma}
\theoremstyle{definition}
\newtheorem{definition}{Definition}
\theoremstyle{remark}
\newtheorem*{remark}{Remark}
\title[Satake equivalence for Hodge modules]{Satake equivalence for Hodge modules on affine Grassmannians}
\author{Roman~Fedorov}
\address{University of Pittsburgh, USA}
\email{fedorov@pitt.edu}
\begin{document}

\begin{abstract}
For a reductive group $G$ we equip the category of $G_\cO$-equivariant polarizable pure Hodge modules on the affine Grassmannian $\Gr_G$ with a structure of neutral Tannakian category. We show that it is equivalent to a twisted tensor product of the category of representations of the Langlands dual group and the category of pure polarizable Hodge structures.
\end{abstract}

\maketitle

\section{Introduction and Main Results}
We fix a complex reductive group $G$. Consider the affine Grassmannian $\Gr_G$ defined as the \'etale sheafification of the functor sending a $\C$-algebra $R$ to the quotient $G\bigl(R((t))\bigr)/G(R[[t]])$, where $R((t)):=R[[t]](t^{-1})$. Let $G_\cO$ be the algebraic group of infinite type representing the functor $R\mapsto G(R[[t]])$, that is, the group of jets. Clearly, $G_\cO$ acts on~$\Gr_G$.

It is well-known that the affine Grassmannian is a strict ind-projective ind-scheme, that is, we can write
\begin{equation}\label{eq:GrIndSch}
    \Gr_G=\lim_{l\to\infty}\Gr_G^{(l)},
\end{equation}
where $(\Gr_G^{(l)})$ is an inductive systems of projective schemes with morphisms being closed embeddings. We may assume that the schemes $\Gr_G^{(l)}$ are $G_\cO$-equivariant.

\subsection{Recollection on the Geometric Satake Equivalence} The first work in the direction of geometrizing the Satake isomorphism was Lusztig's paper~\cite{Lusztig1983singularities}. Independently, Drinfeld understood that geometrizing the Satake isomorphism is crucial for the geometric Langlands correspondence. Following Drinfeld's suggestion, Ginzburg in~\cite{Ginzburg95Satake} proved the characteristic zero case of the Geometric Satake equivalence. We follow~\cite{MirkovicVilonen}, where Mirkovic and Vilonen generalized the above results.

In more details, Mirkovic and Vilonen equip the category $\Perv_\Q^{G_\cO}(\Gr_G)$ with a convolution product and a commutativity constraint for this product, making it into a symmetric monoidal category. They also construct a fiber functor making $\Perv_\Q^{G_\cO}(\Gr_G)$ into a neutral Tannakian category. Thus, according to Tannakian formalism (see for example~\cite[Thm.~2.11]{DeligneMilneTannakian}), there is an affine group $\check G_\Q$ over $\Q$ such that we have an equivalence of categories
\[
    \Perv_\Q^{G_\cO}(\Gr_G)=\Rep_\Q(\check G_\Q).
\]
Here $\Rep_\Q$ stands for the category of finite dimensional representations on $\Q$-vector spaces. The main result of~\cite{MirkovicVilonen} shows that $\check G_\Q$ is split reductive and is Langlands dual to $G$. The main goal of our paper is to ``upgrade'' this theorem from perverse sheaves to pure polarizable Hodge modules.

Recently the geometric Satake equivalence was upgraded to the motivic setup in~\cite{RicharzScholbach2021motivicSatake}. It seems plausible that some of our results can be obtained from Hodge realization of the results in loc.~cit. It is, however, important to have a direct construction. Our result is also, in a sense, simpler, as it does not use the theory of motives.

\subsection{Geometric Satake Equivalence for pure polarizable Hodge modules}
For a complex variety~$Y$ (=reduced separated scheme of finite type over $\C$) one has the category $\MH(Y,n)=\MH(Y,\Q,n)$ of pure polarizable rational Hodge modules of weight $n$ (cf.~\cite{SaitoPolarizable,SaitoMixed}). In~Section~\ref{sect:BlBox} we extend the notion of Hodge modules to ind-schemes and to the equivariant setting so that we have the category $\MH^{G_\cO}(\Gr_G,n)$ of $G_\cO$-equivariant pure polarizable Hodge modules of weight $n$ on $\Gr_G$. This is a semisimple abelian $\Q$-linear category. Consider the graded category
\[
    \MH^{G_\cO}(\Gr_G):=\bigoplus_{n\in\Z}\MH^{G_\cO}(\Gr_G,n).
\]
We equip this category with a structure of neutral Tannakian category in Section~\ref{sect:tannakian}.

Let $\MH(pt,n)$ stand for the category of polarizable Hodge structures of weight~$n$ (which is the same as the polarizable Hodge modules of weight $n$ on a point, thus the notation; see Section~\ref{sect:HodgeStrict}) and set
\[
    \MH(pt):=\bigoplus_{n\in\Z}\MH(pt,n).
\]
Recall that we have the Tate twist, which is a degree $-2$ autoequivalence of $\MH(pt)$.

Our main goal in this paper is to identify the category $\MH^{G_\cO}(\Gr_G)$ as a ``twisted'' tensor product. We will need two constructions.
\subsubsection{Adding the square root of the Tate twist}\label{sect:UntwistTate} Let $\cC$ be a category with an auto-equivalence $T$. We define the new category $\cC[\sqrt T]:=\cC^+\times\cC^-$ to be the product of two copies of $\cC$. This decomposition is viewed as a $\Z/2\Z$ grading. If $\cF$ is an object of $\cC$, we write $\cF^\pm$ for $\cF$ viewed as an object of $\cC^\pm\subset\cC[\sqrt T]$.

We define an autoequivalence $\sqrt T$ of $\cC[\sqrt T]$ by
\[
    \sqrt T(\cF^+)=\cF^-,\quad \sqrt T(\cF^-)=T(\cF)^+.
\]
Note that $T=(\sqrt T)^2$. We will also write $\cF(n/2)$ for $\sqrt T^n(\cF)$.

Assume that $\cC=\bigoplus_{i\in\Z}\cC^i$ is a graded additive category and that $T$ has degree $-2$. Then the category $\cC[\sqrt T]$ also has a $\Z$-grading defined by $\cC[\sqrt T]^i:=(\cC^+)^i\times(\cC^-)^{i+1}$.
Note that $\sqrt T$ has degree $-1$.

Assume now that $\cC$ is a monoidal category and $T$ is compatible with the monoidal structure in the sense that we have natural isomorphisms
\[
    T(\cF\otimes\cG)=T(\cF)\otimes\cG=\cF\otimes T(\cG)
\]
for any objects $\cF$ and $\cG$. Then we define a monoidal structure on $\cC[\sqrt T]$ by
\[
    \cF^+\otimes\cG^+:=(\cF\otimes\cG)^+,\quad
    \cF^+\otimes\cG^-:=(\cF\otimes\cG)^-,\quad
    \cF^-\otimes\cG^+:=(\cF\otimes\cG)^-,\quad
    \cF^-\otimes\cG^-:=(T(\cF\otimes\cG))^+
\]
The product is compatible with $\sqrt T$. If $\cC$ is $\Z$-graded, then this product is compatible with gradings.

\subsubsection{Tensor product of semisimple categories}
Recall that for abelian semisimple $\Q$-linear categories $\cC_1$ and $\cC_2$ one can define their tensor product $\cC_1\boxtimes\cC_2$ (see, e.g.~\cite[Def.~1.1.15]{KirillovBakalov}). The objects of this category are formal expressions $\oplus_{i=1}^n V_{1,i}\boxtimes V_{2,i}$, where $V_{j,i}$ are objects of $\cC_j$; the morphisms are given by
\[
    \Hom(\oplus_{i=1}^n V_{1,i}\boxtimes V_{2,i},\oplus_{j=1}^m V'_{1,j}\boxtimes V'_{2,j}):=
    \oplus_{i,j}\Hom(V_{1,i},V'_{1,j})\otimes_\Q\Hom(V_{2,i},V'_{2,j}).
\]
The tensor product of neutral Tannakian categories is again a neutral Tannakian category.

\subsubsection{} Recall that $\check G_\Q$ is the split reductive group over $\Q$ Langlands dual to $G$. Fix a Borel subgroup $\check B_\Q\subset\check G_\Q$ and a split maximal torus $\check T_\Q\subset\check B_\Q$. The sum of all positive coroots of $\check G_\Q$ with respect to $\check T_\Q\subset\check B_\Q$ is a cocharacter $2\rho\colon\gm\Q\hookrightarrow\check G_\Q$.

The simple objects of $\Rep_\Q\check G_\Q$ correspond to the dominant weights of $\check G_\Q$. If $\lambda$ is such a weight, we call the corresponding representation even or odd depending on the parity of $2\rho(\lambda)$. Thus, we equip the category $\Rep_\Q\check G_\Q$ with a $\Z/2\Z$ grading. Now we can formulate our first main result.
\begin{theorem}\label{th:HodgeSatake}
The category $\MH^{G_\cO}(\Gr_G)$ has a natural structure of a neutral Tannakian category. There is a canonical equivalence of Tannakian categories compatible with the gradings and the fiber functors
  \[
    \MH^{G_\cO}(\Gr_G)=\left(\Rep_\Q\check G_\Q\boxtimes\MH(pt)[\sqrt T]\right)^+,
  \]
  where $\MH(pt)[\sqrt T]$ is an instance of the construction defined in Section~\ref{sect:UntwistTate}, $T$ is the Tate twist; the superscript $+$ stands for the even component of the $\Z/2\Z$-graded category.
\end{theorem}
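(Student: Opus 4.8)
The plan is to deduce the theorem from the Mirkovic--Vilonen equivalence for perverse sheaves, combined with the classification of $G_\cO$-equivariant pure Hodge modules on $\Gr_G$ and with the purity of the intersection cohomology sheaves $\IC_\lambda$; the ``$\sqrt T$'' construction enters precisely to absorb the half-integral Tate twists carried by the $\IC_\lambda$ with $\langle 2\rho,\lambda\rangle$ odd. First I would record that $\MH^{G_\cO}(\Gr_G)$ is a semisimple abelian $\Q$-linear category whose simple objects are $\IC_\lambda\otimes V:=\IC(\Gr_G^\lambda,V)$, the intersection-cohomology extension of the $G_\cO$-equivariant pure Hodge module $V\otimes\Q_{\Gr_G^\lambda}[\dim\Gr_G^\lambda]$, with $\lambda$ ranging over dominant coweights and $V$ over simple polarizable Hodge structures; this rests on Saito's semisimplicity and decomposition theorems together with the fact that each $\Gr_G^\lambda$ is a simply connected $G_\cO$-homogeneous space. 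The convolution $M_1\star M_2:=m_*(M_1\twext M_2)$, with $m$ the proper convolution morphism and $\twext$ the twisted external product of Section~\ref{sect:BlBox}, is again a single pure Hodge module (its underlying perverse complex is perverse by Mirkovic--Vilonen, and proper pushforward of a pure Hodge module is a sum of shifted pure Hodge modules), and it preserves weights; since $\IC_\lambda$ has weight $d_\lambda:=\langle 2\rho,\lambda\rangle=\dim\Gr_G^\lambda$, combining this with the Mirkovic--Vilonen identification of the underlying perverse sheaf gives $\IC_\lambda\star\IC_\mu=\bigoplus_\nu\IC_\nu\otimes M^\nu_{\lambda\mu}$ with each $M^\nu_{\lambda\mu}$ a polarizable Hodge structure, pure of weight $d_\lambda+d_\mu-d_\nu$, of dimension $m^\nu_{\lambda\mu}:=\dim\Hom_{\check G_\Q}(V_\nu,V_\lambda\otimes V_\mu)$.

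The key geometric input is the purity of $\IC_\lambda$ on $\Gr_G$: $H^\bullet(\Gr_G,\IC_\lambda)$ is a sum of Tate Hodge structures, with $H^k$ pure of weight $d_\lambda+k$ and nonzero only for $k\equiv d_\lambda\pmod2$ (stratification of $\overline{\Gr_G^\lambda}$ by affine spaces, together with parity vanishing). Applying the monoidal total cohomology functor to the decomposition above shows that $M^\nu_{\lambda\mu}\otimes H^\bullet(\Gr_G,\IC_\nu)$ is Tate; as $H^\bullet(\Gr_G,\IC_\nu)$ is a nonzero Tate Hodge structure, $M^\nu_{\lambda\mu}$ is Tate, so $M^\nu_{\lambda\mu}\cong\Q\bigl(-(d_\lambda+d_\mu-d_\nu)/2\bigr)^{\oplus m^\nu_{\lambda\mu}}$, the twist being integral because $d_\bullet\bmod2$ is additive on the coweight lattice modulo the coroot lattice of $\check G_\Q$. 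I would then invoke the neutral Tannakian structure on $\MH^{G_\cO}(\Gr_G)$ of Section~\ref{sect:tannakian}: the associativity and commutativity constraints (the latter coming from the fusion product, lifted to Hodge modules via Saito's nearby-cycle formalism) and the fiber functor $\omega:=\bigoplus_k H^k(\Gr_G,-)$ with values in $\Vect_\Q$.

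Next I would define $\Psi\colon\MH^{G_\cO}(\Gr_G)\to\bigl(\Rep_\Q\check G_\Q\boxtimes\MH(pt)[\sqrt T]\bigr)^+$ on simple objects by $\IC_\lambda\otimes V\mapsto V_\lambda\boxtimes V(-d_\lambda/2)$, where $V(-d_\lambda/2):=\sqrt T^{-d_\lambda}(V^+)$ lies in the $+$-component if $d_\lambda$ is even and in the $-$-component if $d_\lambda$ is odd; since the parity of $V_\lambda$ is that of $d_\lambda$, this object lies in the even part. Extended $\Q$-linearly, $\Psi$ is an equivalence of abelian categories: it is a bijection on isomorphism classes of simple objects and an isomorphism on Hom-spaces, as Tate twists are autoequivalences of $\MH(pt)$. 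Using the decomposition above, the compatibility of $\sqrt T$ with the twisted monoidal structure on $\cC[\sqrt T]$ (encoded in the rule $\cF^-\otimes\cG^-=(T(\cF\otimes\cG))^+$ when both half-twists are odd), and the definition of $\boxtimes$, one checks that $\Psi$ sends $\star$ to $\otimes$: the half-twists attached to $\Psi(\IC_\lambda)$ and $\Psi(\IC_\mu)$ recombine into an integral twist, yielding $\Psi(\IC_\lambda\star\IC_\mu)=(V_\lambda\otimes V_\mu)\boxtimes\Q\bigl(-(d_\lambda+d_\mu)/2\bigr)=\Psi(\IC_\lambda)\otimes\Psi(\IC_\mu)$, and similarly for the unit $\IC_0$ and the associativity isomorphisms. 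Compatibility with the $\Z$-gradings is immediate from $\mathrm{wt}(\IC_\lambda\otimes V)=d_\lambda+\mathrm{wt}(V)$ and the grading on $\cC[\sqrt T]$, and compatibility of fiber functors follows from $H^\bullet(\Gr_G,\IC_\lambda)\cong\underline{V_\lambda}$ (Mirkovic--Vilonen) together with the K\"unneth formula; rigidity is then automatic for a faithful exact symmetric monoidal functor.

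The step I expect to be the main obstacle is the compatibility of the symmetry constraints. On the left the constraint is the fusion (or Ginzburg-type) one, built so that $\omega$ is symmetric with values in $\Vect_\Q$; on the right it is the symmetry of $\boxtimes$, i.e.\ the tensor product of the standard symmetries on $\Rep_\Q\check G_\Q$ and on $\MH(pt)[\sqrt T]$ twisted by the Koszul sign of the two $\Z/2\Z$-gradings (the parity of $2\rho(\lambda)$, and the $\pm$-grading). One must show that the sign by which the geometric commutativity constraint differs from the naive external product---governed on cohomology by the parity of the cohomological degree, hence by the cocharacter $2\rho$, exactly as in Mirkovic--Vilonen---is absorbed precisely by passage to $\cC[\sqrt T]$ and restriction to the even part. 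Carrying out this sign bookkeeping (which is also where the half-Tate twist becomes unavoidable) finishes the proof that $\Psi$ is an equivalence of neutral Tannakian categories compatible with the gradings and the fiber functors, and in particular confirms the Tannakian structure asserted in the statement.
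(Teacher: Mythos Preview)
Your proposal is correct and shares the paper's core strategy (classification of simples as $\IC_\lambda(\cH)$, the Tate property of the convolution of the $\IC_\lambda(\cH_0)$, half-Tate twists to normalize weights, and Mirkovi\'c--Vilonen), but differs in execution in two ways. First, the paper goes in the opposite direction: it defines $F\colon\Perv_\Q^{G_\cO}(\Gr_G)\to\MH^{G_\cO}(\Gr_G)[\sqrt T]$ by $\IC_\lambda\mapsto\IC_\lambda(\cH_0)^+(\tfrac12\dim\Gr^\lambda)$, pairs it with the embedding $G\colon\MH(pt)[\sqrt T]\hookrightarrow\MH^{G_\cO}(\Gr_G)[\sqrt T]$ coming from Lemma~\ref{lm:star}, shows $F\boxtimes G$ is an equivalence on the $\sqrt T$-extended categories, and only then restricts to the even part. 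Working \emph{into} $\MH^{G_\cO}(\Gr_G)[\sqrt T]$ rather than out of it lets one use $\Rat$ to pin down the monoidal coherence data, whereas your $\Psi$ requires assembling the non-canonical isomorphisms $M^\nu_{\lambda\mu}\cong\Q(-(d_\lambda+d_\mu-d_\nu)/2)^{m^\nu_{\lambda\mu}}$ coherently---doable in a semisimple setting, but it needs an extra word. Second, and more substantively, your justification for the Tate-ness of $H^\bullet(\Gr_G,\IC_\lambda)$ via ``stratification of $\overline{\Gr_G^\lambda}$ by affine spaces, together with parity vanishing'' is loose: the $G_\cO$-orbits are not affine spaces, and even the Iwahori paving does not by itself yield Tate IC cohomology without pointwise purity of the stalks. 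The paper's route (Lemma~\ref{lm:StandardSheaf} and Proposition~\ref{pr:ClosedUnderStar}) is more self-contained: it computes $a_*$ of the $!$-extension $H^0((j_\lambda)_!\,a_\lambda^\dagger\Q^{\mathrm{Hdg}})$ via the semi-infinite cells $S_\nu\cap\Gr^\lambda$, where only top compactly supported cohomology enters (which is always Tate), and then extracts $\IC_\lambda(\cH_0)$ as a direct summand of the weight-zero graded piece. Your extended discussion of the commutativity constraint is more explicit than the paper's, which disposes of it once in Section~\ref{sect:tannakian} via fusion and faithfulness of $\Rat$ and does not revisit it in the proof of the theorem.
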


We note that if $-1$ is in the kernel of $2\rho\colon\gm\Q\hookrightarrow\check G_\Q$, then the $\Z/2\Z$-grading on $\Rep_\Q\check G_\Q$ is trivial, and the previous theorem becomes
\[
    \MH^{G_\cO}(\Gr_G)=\Rep_\Q\check G_\Q\boxtimes\MH(pt).
\]

\subsection{Geometric Satake Equivalence for Tate's Hodge modules}
Let $\Tate^{G_\cO}(\Gr_G)$ be the strictly full subcategory of $\MH^{G_\cO}(\Gr_G)$ whose objects are isomorphic to direct sums of Tate twists of IC Hodge modules on the $G_\cO$-orbits in $\Gr_G$. This is a semisimple monoidal subcategory of $\MH^{G_\cO}(\Gr_G)$, see Proposition~\ref{pr:ClosedUnderStar} below. We can give a more direct Tannakian description of this category.

We can form a twisted product
\[
   \check G_\Q\times^{\mu_2}\gm\Q:=(\check G_\Q\times\gm\Q)/((2\rho\times\iota)(\mu_2)),
\]
where $\iota\colon\mu_2\to\gm\Q$ is the standard embedding. This is Deligne's modified Langlands dual group.

\begin{theorem}\label{th:TateSatake}
There is a canonical equivalence of Tannakian categories compatible with the gradings and the fiber functors
    \[
        \Tate^{G_\cO}(\Gr_G)\simeq\Rep_\Q(\check G_\Q\times^{\mu_2}\gm\Q).
    \]
\end{theorem}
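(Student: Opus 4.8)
The plan is to deduce Theorem~\ref{th:TateSatake} from Theorem~\ref{th:HodgeSatake} by restricting the latter equivalence to a suitable subcategory and then identifying that subcategory Tannakianly. First I would note that under the equivalence of Theorem~\ref{th:HodgeSatake}, the subcategory $\Tate^{G_\cO}(\Gr_G)$, being generated under direct sums by Tate twists of the $\IC$-sheaves on $G_\cO$-orbits, corresponds to the strictly full subcategory of $\left(\Rep_\Q\check G_\Q\boxtimes\MH(pt)[\sqrt T]\right)^+$ generated by objects of the form $V_\lambda\boxtimes\Q(m/2)^{\pm}$, where $V_\lambda$ is the irreducible representation of highest weight $\lambda$ and $\Q(m/2)$ ranges over half-integral Tate \emph{Hodge structures} (rather than arbitrary Hodge structures). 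This uses that the $\IC$-sheaf on the orbit $\overline{\Gr_G^\lambda}$ corresponds, after the Satake equivalence, to $V_\lambda$ tensored with a Tate Hodge structure whose weight/parity is governed by $2\rho(\lambda) = \langle 2\rho,\lambda\rangle$, exactly as in the geometric origin of the $\Z/2\Z$-grading described before Theorem~\ref{th:HodgeSatake}.

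Next I would make the Tannakian identification on the representation-theoretic side. Let $\Tate(pt)\subset\MH(pt)$ be the full subcategory of Tate Hodge structures, i.e. direct sums of $\Q(m)$'s; this is the Tannakian category $\Rep_\Q\gm\Q$, with $\Q(m)$ the character $z\mapsto z^{-m}$ (or $z^m$, fixing a convention). Then $\Tate(pt)[\sqrt T] = \Rep_\Q\gm\Q[\sqrt T]$, and the half-Tate twist $\Q(1/2)$ is precisely the character of $\gm\Q$ that does not descend to the quotient by squares; concretely $\Rep_\Q\gm\Q[\sqrt T]\simeq\Rep_\Q\gm\Q$ with the doubled lattice, the new $\gm\Q$ recording half-integer Tate weights. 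The subcategory of $\left(\Rep_\Q\check G_\Q\boxtimes\Rep_\Q\gm\Q[\sqrt T]\right)^+$ picked out in the previous paragraph is then, by the Tannakian description of tensor products of Tannakian categories (the tensor product corresponds to the direct product of groups) together with the ``even part'' operation, exactly $\Rep_\Q$ of the quotient group cut out by the relation identifying the parity character $2\rho\bmod 2$ of $\check G_\Q$ with the half-integrality defect $\iota$ of the doubled $\gm\Q$. That quotient is $\check G_\Q\times^{\mu_2}\gm\Q = (\check G_\Q\times\gm\Q)/(2\rho\times\iota)(\mu_2)$. So the statement reduces to a purely group-theoretic bookkeeping lemma: for a split reductive $H$ and a character $\chi\colon H\to\gm{}$, one has $\left(\Rep_\Q H\boxtimes\Rep_\Q\gm\Q[\sqrt T]\right)^{+,\chi} \simeq \Rep_\Q\bigl((H\times\gm\Q)/(\chi^\vee\times\iota)(\mu_2)\bigr)$, applied with $H=\check G_\Q$, $\chi$ corresponding to $2\rho$.

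To make this precise I would unwind the construction of Section~\ref{sect:UntwistTate}: objects of $\cC[\sqrt T]$ are pairs $(\cF^+,\cF^-)$ with the $\Z/2\Z$-grading, and the even part of $\Rep_\Q\check G_\Q\boxtimes\Rep_\Q\gm\Q[\sqrt T]$ consists of sums $\bigoplus_i V_i\boxtimes W_i$ in which $V_i$ has parity $p(V_i)$ (under $2\rho\bmod 2$) and $W_i$ lives in the $p(V_i)$-component of $\Rep_\Q\gm\Q[\sqrt T]$. One checks that the assignment sending $V_\lambda\boxtimes\Q(m/2)$ to the $(\check G_\Q\times\gm\Q)$-representation $V_\lambda\otimes(\text{weight }m)$ (which is well-defined on the quotient precisely because the parity matching kills the nontrivial element of $(2\rho\times\iota)(\mu_2)$) is an equivalence of abelian categories, is monoidal for the twisted product of Section~\ref{sect:UntwistTate} (the ``$T$'' in the $\cF^-\otimes\cG^-$ formula is exactly the relation $\Q(1/2)^{\otimes 2}=\Q(1)$ on the torus side), and is compatible with the $\Z$-gradings and with the fiber functors (both are ``take underlying $\Q$-vector space''). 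Composing with the restriction of the equivalence of Theorem~\ref{th:HodgeSatake} to $\Tate^{G_\cO}(\Gr_G)$ then gives the desired equivalence with $\Rep_\Q(\check G_\Q\times^{\mu_2}\gm\Q)$.

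The main obstacle, I expect, is not any single hard step but getting all the conventions to line up simultaneously: the sign/parity in the definition of even vs.\ odd representations must match the half-integer Tate weight appearing on the geometric $\IC$-sheaves (so that the $\mu_2$ being quotiented is the \emph{anti-diagonal} $(2\rho\times\iota)(\mu_2)$ and not the diagonal), and the monoidal compatibility must be checked against the precise twisted product formulas $\cF^-\otimes\cG^-=(T(\cF\otimes\cG))^+$ of Section~\ref{sect:UntwistTate}, including the hexagon/commutativity constraints, so that the resulting functor is genuinely a tensor equivalence and not merely an equivalence of abelian categories. Verifying that $\Tate^{G_\cO}(\Gr_G)$ really is closed under convolution (cited as Proposition~\ref{pr:ClosedUnderStar}) and that the restricted fiber functor still lands in finite-dimensional $\Q$-vector spaces is routine given the earlier sections, but it is the piece that makes the Tannakian reconstruction of the quotient group applicable.
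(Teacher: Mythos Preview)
Your proposal is correct, but the route differs from the paper's in a small organizational way worth noting. You deduce Theorem~\ref{th:TateSatake} \emph{from} Theorem~\ref{th:HodgeSatake} by restricting the established equivalence to the Tate subcategory and then identifying $(\Rep_\Q\check G_\Q\boxtimes\Tate(pt)[\sqrt T])^+$ with $\Rep_\Q(\check G_\Q\times^{\mu_2}\gm\Q)$. The paper instead proves Theorem~\ref{th:TateSatake} in parallel to, rather than as a corollary of, Theorem~\ref{th:HodgeSatake}: it constructs explicit monoidal functors $F\colon\Perv_\Q^{G_\cO}(\Gr_G)\to\Tate^{G_\cO}(\Gr_G)[\sqrt T]$ (sending $\IC_\lambda\mapsto\IC_\lambda(\cH_0)^+(\tfrac12\dim\Gr^\lambda)$) and $G\colon\Rep_\Q\gm\Q\to\Tate^{G_\cO}(\Gr_G)[\sqrt T]$ (sending the weight-$i$ character to $\IC_0(\cH_0)^+(i/2)$), shows $F\boxtimes G$ is an equivalence, and only then passes to even parts and identifies the resulting $\mu_2$. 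The underlying ingredients are identical---Proposition~\ref{pr:ClosedUnderStar}, the formula $\IC_\lambda(\cH)=\IC_0(\cH)\star\IC_\lambda(\cH_0)$, and the parity bookkeeping matching $2\rho(\lambda)$ with the half-integer Tate twist---so neither approach is materially harder; yours is a bit cleaner conceptually once Theorem~\ref{th:HodgeSatake} is in hand, while the paper's direct construction makes the equivalence of Theorem~\ref{th:TateSatake} logically independent of the full Hodge-structure statement.
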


This should be compared with~\cite[Thm.~C]{RicharzScholbach2021motivicSatake}.

\subsection{Geometric Satake equivalence for mixed Hodge modules} Similarly to the above, one can define the category $\MHM^{G_\cO}(\Gr_G)$ of $G_\cO$-equivariant mixed Hodge modules on $\Gr_G$. In Section~\ref{sect:Mixed} we sketch the proof of the following theorem.

\begin{theorem}\label{th:MixedSatake}
The category $\MHM^{G_\cO}(\Gr_G)$ has a natural structure of a neutral Tannakian category. There is a canonical equivalence of Tannakian categories compatible with the fiber functors
    \[
        \MHM^{G_\cO}(\Gr_G)\simeq\left(\Rep_\Q\check G_\Q\boxtimes\MHM(pt)[\sqrt T]\right)^+,
    \]
where, as before, $T$ is the Tate twist and the superscript $+$ stands for the even component of the $\Z/2\Z$-graded category.
\end{theorem}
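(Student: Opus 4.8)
The plan is to follow the proof of Theorem~\ref{th:HodgeSatake} step by step, isolating the two places where the mixed setting needs genuinely new input: endowing $\MHM^{G_\cO}(\Gr_G)$ with a neutral Tannakian structure, and computing its (no longer semisimple) abelian structure.

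\emph{Tannakian structure.} The convolution $\cF_1\star\cF_2:=m_*(\cF_1\twext\cF_2)$ is defined by Saito's proper pushforward and the $G_\cO$-equivariant twisted external product, which make sense on the relevant finite-dimensional $G_\cO$-stable pieces of $\Gr_G$ exactly as in the pure case. To see that $\cF_1\star\cF_2$ lies in degree $0$, I would apply the realization functor $\mathrm{rat}$ to underlying perverse sheaves: $\mathrm{rat}(\cF_1\star\cF_2)=\mathrm{rat}(\cF_1)\star\mathrm{rat}(\cF_2)$ is perverse because the convolution morphism is semismall on supports, and $\mathrm{rat}\colon D^b\MHM(\Gr_G)\to D^b_c(\Gr_G)$ is $t$-exact and conservative, so $\cF_1\star\cF_2$, having perverse realization in a single degree, is itself in degree $0$. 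Associativity and the commutativity constraint come from the Beilinson--Drinfeld Grassmannian and from nearby cycles, all of which preserve mixed Hodge modules by Saito's theory; as in Theorem~\ref{th:HodgeSatake}, the sign in the commutativity constraint is governed by the parity of $\langle2\rho,-\rangle$, which is what forces the bracket $[\sqrt T]$ and the passage to the even part $(-)^+$. The fiber functor is $\omega:=\bigoplus_n H^n(\Gr_G,-)$; each $H^n$ of a mixed Hodge module on a projective $\Gr_G^{(l)}$ is a mixed Hodge structure, so $\omega$ takes values in $\MHM(pt)$, and then, after the forgetful functor, in $\Q$-vector spaces. Exactness of $\omega$ follows because a short exact sequence of mixed Hodge modules maps under $\mathrm{rat}$ to a short exact sequence in the semisimple category $\Perv^{G_\cO}_\Q(\Gr_G)$, so the connecting maps in the long exact cohomology sequence vanish after $\mathrm{rat}$ and hence vanish; faithfulness follows from faithfulness of $\mathrm{rat}$; the tensor structure on $\omega$ is the Künneth isomorphism for the convolution diagram, compatible with Hodge structures by Saito's Künneth formula. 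This yields $\MHM^{G_\cO}(\Gr_G)=\Rep_\Q\check G'$ for a pro-algebraic $\Q$-group $\check G'$ equipped with a weight cocharacter.

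\emph{Abelian structure (the crux).} I would prove that, as abelian categories,
\[
\MHM^{G_\cO}(\Gr_G)\ \cong\ \bigoplus_{\lambda}\MHM(pt),
\]
$\lambda$ running over dominant coweights, the $\lambda$-summand embedded by $H\mapsto \IC_\lambda\otimes H$ (up to the half-Tate-twist dictated by the parity of $d_\lambda:=\langle2\rho,\lambda\rangle=\dim\Gr_G^\lambda$). Since the orbits $\Gr_G^\lambda$ are simply connected and $G_\cO$ is connected, the simple objects are exactly the $\IC_\lambda\otimes V$ with $V$ a simple polarizable Hodge structure. For $\lambda\neq\mu$ one shows $\Ext^1_{\MHM^{G_\cO}(\Gr_G)}(\IC_\lambda\otimes V,\IC_\mu\otimes V')=0$ by working on the union of the two orbit closures and using the adjunctions $i^*\dashv i_*$ and $i_*\dashv i^!$ for the closed embedding $i$ of the smaller orbit closure, together with the strict support and cosupport conditions $i^*\IC\in{}^pD^{\le -1}$, $i^!\IC\in{}^pD^{\ge 1}$; after these reductions the only potentially nonzero term is a $\Hom$ involving ${}^pH^{-1}(i^*\IC)$ or ${}^pH^{1}(i^!\IC)$, and these vanish because the perverse cohomologies of $i^*\IC$ and $i^!\IC$ on $\Gr_G$ are concentrated in \emph{even} degrees --- this is where I use that $d_\mu\equiv d_\nu\pmod 2$ whenever $\Gr_G^\nu\subseteq\Gr_G^{\le\mu}$, since $\mu-\nu$ is a nonnegative integral combination of simple coroots and $\langle2\rho,\alpha^\vee\rangle$ is even for every positive coroot $\alpha^\vee$. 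For $\lambda=\mu$, restriction $j^*$ to the open orbit $\Gr_G^\lambda$ is fully faithful on objects equal to $j_{!*}j^*$ of themselves --- a property enjoyed by every extension of $\IC_\lambda\otimes V$ by $\IC_\lambda\otimes V'$, since no subquotient of such an extension can be supported on $\Gr_G^{<\lambda}$ --- and, $G_\cO$-equivariant variations of mixed Hodge structure on the simply connected $\Gr_G^\lambda$ being constant, this identifies $\Ext^1_{\MHM^{G_\cO}(\Gr_G)}(\IC_\lambda\otimes V,\IC_\lambda\otimes V')$ with $\Ext^1_{\MHM(pt)}(V,V')$, the isomorphism realized by $H\mapsto H\boxtimes\IC_\lambda$. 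Iterating extensions, every object decomposes as $\bigoplus_\lambda\IC_\lambda\otimes H_\lambda$, and together with the vanishing of $\Hom$'s between distinct $\lambda$'s this gives the displayed equivalence; by construction it is compatible with weight filtrations, $\gr^W$ corresponding under Theorem~\ref{th:HodgeSatake} to the inclusion of pure objects.

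\emph{Identification of $\check G'$, and the main obstacle.} The splitting $\MH^{G_\cO}(\Gr_G)\hookrightarrow\MHM^{G_\cO}(\Gr_G)\xrightarrow{\ \gr^W\ }\MH^{G_\cO}(\Gr_G)$ exhibits $\check G'=\check G^{\mathrm H}\ltimes U$ with $U$ pro-unipotent, where $\check G^{\mathrm H}:=\pi_1\bigl(\MH^{G_\cO}(\Gr_G)\bigr)=\pi_1\bigl((\Rep_\Q\check G_\Q\boxtimes\MH(pt)[\sqrt T])^+\bigr)$ by Theorem~\ref{th:HodgeSatake}. The full tensor embedding $i_*\colon\MHM(pt)\hookrightarrow\MHM^{G_\cO}(\Gr_G)$ (skyscraper at the base point), whose image is closed under subquotients, gives a surjection $\check G'\twoheadrightarrow\pi_1\bigl(\MHM(pt)\bigr)$ which, by the $\Ext^1$ computation, restricts to an isomorphism of $U$ with the pro-unipotent radical $U_0$ of $\pi_1\bigl(\MHM(pt)\bigr)$, the conjugation action of $\check G^{\mathrm H}$ on $U_0$ factoring through the Hodge quotient $\pi_1\bigl(\MH(pt)\bigr)$. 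Exactly the same data --- the reductive part from Theorem~\ref{th:HodgeSatake}, the unipotent part $U_0$ with the same action --- presents $\pi_1\bigl((\Rep_\Q\check G_\Q\boxtimes\MHM(pt)[\sqrt T])^+\bigr)$, so $\check G'$ is canonically isomorphic to it; tracking simple objects ($\IC_\lambda\otimes V\leftrightarrow V_\lambda\boxtimes V$, with the parity bookkeeping) and using exactness shows the equivalence respects the fiber functors. The main obstacle is the foundational one underlying everything above: setting up Saito's six operations, external products and nearby cycles in the $G_\cO$-equivariant setting on the ind-scheme $\Gr_G$ so that these manipulations are legitimate (this is largely the content of the earlier sections, reused here); granting that, the one genuinely new computation is the even-degree concentration of $i^*\IC$ and $i^!\IC$ that kills the off-diagonal $\Ext^1$'s.
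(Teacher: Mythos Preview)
Your argument is correct, and up through the abelian structure it coincides with the paper's: the paper's Proposition~\ref{pr:MHM_DirectSum} is exactly your off-diagonal $\Ext^1$-vanishing, proved by the same parity observation (there phrased as ``$j^*\IC_\lambda$ lives in perverse degrees $\le-2$'' with a reference to~\cite[Sect.~2.1.3]{GaitsgoryCentral}). Your $\lambda=\mu$ computation, giving the full abelian splitting $\MHM^{G_\cO}(\Gr_G)\simeq\bigoplus_\lambda\MHM(pt)$, is a little more than the paper states but is implicit in its conclusion.

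The genuine divergence is in the final identification. The paper does \emph{not} compute $\check G'$ or invoke a Levi decomposition; it simply reuses the explicit tensor functor from the proof of Theorem~\ref{th:HodgeSatake}. Namely, $F\colon\Perv_\Q^{G_\cO}(\Gr_G)\to\MHM^{G_\cO}(\Gr_G)[\sqrt T]$, $\IC_\lambda\mapsto\IC_\lambda(\cH_0)^+(\tfrac12 d_\lambda)$, is monoidal because the Tate subcategory is closed under convolution (Proposition~\ref{pr:ClosedUnderStar}); together with the obvious $G\colon\MHM(pt)[\sqrt T]\hookrightarrow\MHM^{G_\cO}(\Gr_G)[\sqrt T]$, the functor $F\boxtimes G$ is checked to be an equivalence using only the object decomposition of Proposition~\ref{pr:MHM_DirectSum} and the $\Hom$-computation on a single orbit. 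This bypasses any analysis of $\check G'$ entirely. Your route---identifying $\check G'=\check G^{\mathrm H}\ltimes U$ via $\gr^W$, then using the tensor embedding $i_*\colon\MHM(pt)\hookrightarrow\MHM^{G_\cO}(\Gr_G)$ to get a group surjection $q$ onto $\pi_1(\MHM(pt))$ which restricts to an isomorphism $U\xrightarrow{\sim}U_0$---is also valid; the point you leave implicit (that the $\check G^{\mathrm H}$-action on $U$ agrees with the standard one on $U_0$) follows because $q$ is a group homomorphism, so conjugation by $g\in\check G^{\mathrm H}$ on $U$ corresponds to conjugation by $q(g)\in\pi_1(\MH(pt))$ on $U_0$. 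What your approach buys is a conceptual explanation of where the non-semisimplicity lives (entirely in the $\MHM(pt)$ factor); what the paper's buys is that it needs no Tannakian group theory at all beyond the pure case, and in particular never has to name $\check G'$. A small aside: your remark that the sign in the commutativity constraint ``forces'' $[\sqrt T]$ conflates two independent things---the half-Tate twist is there to make $F$ land in weight zero, not to repair commutativity.
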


We note that the category $\MHM(pt)$ is not semisimple. However, the tensor product of two $\Q$-linear categories also makes sense if only one of the categories is semisimple. Thus, the right hand side of the above equivalence of categories makes sense.

\subsection{Future directions} Consider the cotangent stack
\[
    \St_G:=T^*(G_\cO\backslash\Gr_G),
\]
which we call the \emph{Steinberg Stack}. Here the stack $G_\cO\backslash\Gr_G$ is the quotient of $\Gr_G$ by the action of $G_\cO$. Note that $\St_G$  is \emph{not} an ind-algebraic stack, since some points have infinite-dimensional automorphism groups. However, one can show that it is a quotient of an ind-scheme by $G_\cO$. In a subsequent work we will construct
a functor
\[
    \Rep_\Q(\check G_\Q\times^{\mu_2}\gm\Q)\to\Coh(\St_G),
\]
where $\Coh(\bullet)$ is the abelian category of coherent sheaves. We will equip $\Coh(\St_G)$ with a monoidal structure and show that the functor is uniquely defined by certain axioms, provided that the quotient of $G$ by its radical is of adjoint type. We call this functor the \emph{semi-classical Satake functor}. We expect that this functor will play a crucial role in the local Langlands duality for Hitchin systems. Let us sketch the construction of this functor.

Assume first that $Y$ is a separated scheme smooth over $\C$. Then $\MH(Y)$ is a strictly full subcategory of the category of $\EuD$-modules on $Y$ with good filtrations (see Section~\ref{sect:HodgeOnSch} below). Since the associated graded of a filtered $\EuD$-module is a coherent sheaf on the cotangent bundle, we get a functor $\gr\colon\MH(Y)\to\Coh(T^*Y)$. This can be extended to ind-schemes and to the equivariant setting so that we get a functor
\[
    \gr\colon\MH^{G_\cO}(\Gr_G)\to\Coh(\St_G).
\]
The semi-classical Satake functor is defined as the composition
\[
    \Rep_\Q(\check G_\Q\times^{\mu_2}\gm\Q)\xrightarrow{\simeq}\Tate^{G_\cO}(\Gr_G)\hookrightarrow\MH^{G_\cO}(\Gr_G)
    \xrightarrow{\gr}\Coh(\St_G).
\]
The biggest difficulty in defining this functor is that $\Gr_G$ is not a limit of smooth projective schemes.

\subsection{Notation and conventions} We will be working with left $\EuD$-modules. We will freely work with perverse sheaves on ind-schemes. The category of perverse sheaves of vector spaces over $\Q$ on $Y$, where $\Q$ is a field, is denoted by $\Perv_\Q(Y)$.

We call an affine group scheme over $\Q$ a group for brevity.

We write $\simeq$ for an isomorphism and simply $=$ for a canonical isomorphism.

\subsection{Organization of the paper} In Section~\ref{sect:BlBox} we recall all the statements we need about Hodge modules. We extend the definition of Hodge modules to the case of ind-schemes and to the equivariant setup.

In Section~\ref{sect:tannakian} we equip the category $\MH^{G_\cO}(\Gr_G)$ with a structure of neutral Tannakian category. In Section~\ref{Sect:proof} we prove Theorems~\ref{th:HodgeSatake} and~\ref{th:TateSatake}. Finally, in Section~\ref{sect:Mixed} we sketch a proof of Theorem~\ref{th:MixedSatake}.

\subsection{Acknowledgements} A large part of this paper was written while the author visited Max Planck Institute for Mathematics in Bonn (July 2018) and also Research Institute for Mathematical Sciences in Kyoto (December 2018). The author would like to thank Claude Sabbah for answering numerous questions about Hodge modules. The author also benefited from talks with Roman Bezrukavnikov. The author is thankful to Dennis Gaitsgory for the idea of the proof of Proposition~\ref{pr:ClosedUnderStar} and to George Lusztig for correcting a historical mistake. Of great importance was advice from Dima Arinkin. The paper would not have been written without his help.

The author is partially supported by NSF grants DMS--1406532, DMS--2001516.

\section{Hodge modules on ind-schemes}\label{sect:BlBox} The categories of Hodge modules on varieties have been defined and studied by M.~Saito in~\cite{SaitoPolarizable,SaitoMixed}. In this section we extend this to the case of ind-separated ind-schemes with `nice' actions of affine groups. We formulate all the statements we will need later and prove most of them (leaving a few simpler statements to the reader). It seems that some of the results about pure polarizable Hodge modules (especially the results related to the inverse images) can only be proved with the help of mixed Hodge modules, so we will use mixed Hodge modules in the proofs as well.

\subsection{Hodge modules on schemes} We briefly recall M.~Saito's theory of Hodge modules.
\subsubsection{Filtered $\EuD$-modules with rational structures on smooth schemes} Let $Y$ be a separated scheme smooth over $\C$. Let $\EuD_Y$ denote the sheaf of differential operators on $Y$ with the usual filtration $\EuD^{\le p}_Y$ by the degree. Let $M$ be a $\EuD_Y$-module. Recall that an increasing filtration $F_\bullet$ of $M$ by $\cO_Y$-coherent submodules is \emph{good\/} if it is separated and exhaustive, it satisfies $\EuD^{\le p}_Y\cdot F_qM\subset F_{p+q}M$, and there is $q_0\in\Z$ such that for all $p\ge0$ and $q\ge q_0$ we have $\EuD^{\le p}_Y\cdot F_qM=F_{p+q}M$.

Let $\FM_{rs}(\EuD_Y)$ be the category of quadruples $(M,F_\bullet,\cM,\alpha)$, where $M$ is a holonomic $\EuD_Y$-module with regular singularities, $F_\bullet$ is a good filtration on $M$, $\cM\in\Perv_\Q(Y)$, and $\alpha$ is an isomorphism $\alpha\colon\DR(M)\xrightarrow{\simeq}\cM\otimes_\Q\C$ ($\DR$ is the de Rham functor).

Since $\DR$ is an equivalence of categories, we see that $\FM_{rs}(\EuD_Y)$ can be defined equivalently as the category of pairs $(\cM,F_\bullet)$, where $\cM\in\Perv_\Q(Y)$, $F_\bullet$ is good filtration of $\DR^{-1}(\cM\otimes_\Q\C$). It is clear that $\FM_{rs}(\EuD_Y)$ is an additive $\Q$-linear category. We define the functor $\Rat\colon\FM_{rs}(\EuD_Y)\to\Perv_\Q(Y)$ by $\Rat(M,F_\bullet,\cM,\alpha)=\cM$. We see that $\Rat$ is faithful.


If $j\colon U\to Y$ is an open embedding, we have an obvious restriction functor
\begin{equation}\label{eq:resopen}
    j^*\colon\FM_{rs}(\EuD_Y)\to\FM_{rs}(\EuD_U).
\end{equation}
On the other hand, let $i\colon Y'\to Y$ be a closed embedding of smooth varieties with connected $Y'$. Following~\cite[Intro.]{SaitoPolarizable}, we define the direct image functor
\begin{equation}\label{eq:closedembedding}
    i_*\colon\FM_{rs}(\EuD_{Y'})\to\FM_{rs}(\EuD_Y)
\end{equation}
by
\[
    i_*(M,F_\bullet,\cM,\alpha):=(i_*M,{'\!F}^\bullet,i_*\cM,i_*(\alpha)).
\]
We are using the fact that $\DR$ commutes with $i_*$. The filtration $'\!F_\bullet$ is defined as follows. Recall that $i_*M=i_{ab,*}(\EuD_{Y\leftarrow Y'}\otimes_{\EuD_{Y'}}M)$, where $i_{ab,*}$ is the direct image for sheaves of abelian groups.  Then
\[
    {'\!F}^pi_*M:=i_{ab,*}\left(\sum_{q+\nu\le p-n}F_\nu\EuD_{Y\leftarrow Y'}\otimes F_qM\right),
\]
where $n$ is the codimension of $Y'$ in $Y$. It is clear that the restriction functor for the composition of open embeddings is the composition of restriction functors, and the direct image functor for the composition of closed embeddings is the composition of direct image functors. It is also clear that $i_*$ commutes with restriction to open subsets (this is a particular case of the base change). We define the support of a filtered $\EuD_Y$-module with a rational structure by
\[
    \supp(M,F_\bullet,\cM,\alpha):=\supp(M)=\supp(\cM).
\]
Thus $\supp(M,F_\bullet,\cM,\alpha)$ is a reduced closed subscheme of $Y$.

\subsubsection{Polarizable Hodge modules on separated schemes of finite type}\label{sect:HodgeOnSch}
In~\cite{SaitoPolarizable} for every smooth over $\C$ separated scheme $Y$ Saito defines the category $\MH(Y,\Q,n)^p$ of polarizable Hodge module of weight $n$. This is a strictly full subcategory of $\FM_{rs}(\EuD_Y)$. More precisely, Saito works with an analogue of $\FM_{rs}(\EuD_Y)$, where the $\EuD_Y$-modules are only assumed to be holonomic. However, in retrospect all $\EuD_Y$-modules underlying polarizable Hodge modules have regular singularities~\cite[p.~858 and Rem.~5.1.18]{SaitoPolarizable}.

\emph{Convention}. We will only work with rational polarizable Hodge modules, so we denote $\MH(Y,\Q,n)^p$ by $\MH(Y,n)$ and sometimes skip the adjective `polarizable'.

Let now $Y$ be a not necessarily smooth reduced separated scheme of finite type over $\C$. In~\cite[Sect.~5.3.12]{SaitoPolarizable} (see also~\cite[Sect.~2.1]{SaitoMixed}) Saito extends the definition of $\MH(Y,n)$ to this case as follows. Let $Y=\bigcup_i U_i$ be a Zariski cover and $U_i\to V_i$ be closed embeddings into smooth schemes. Then a Hodge module on $Y$ is a collection $\cF_i$ of Hodge modules on $V_i$ such that $\supp(\cF_i)\subset U_i$ together with isomorphisms on intersections satisfying the cocycle condition. The category $\MH(Y,n)$ does not depend on the cover and the embeddings because Hodge modules on smooth separated schemes satisfy the Zariski descent and Kashiwara's equivalence.

\begin{remark}
Polarizable Hodge modules satisfy Kashiwara's equivalence according to~\cite[Lemme~5.1.9, Lemme~5.2.11]{SaitoPolarizable}. It follows implicitly from~\cite[Sect.~5.3.12]{SaitoPolarizable} that they satisfy Zariski descent. The reason is, however, not quite clear. In fact, Zariski descent follows from definitions for Hodge modules, but it is not obvious that a locally polarizable module is polarizable. This, however, follows from~\cite[Thm.~3.21]{SaitoMixed} after reducing to the case of a module with strict support (cf.~also Proposition~\ref{pr:Saito3.21} below).
\end{remark}

To summarize, for a reduced separated $\C$-scheme of finite type, we have the category of  Hodge modules; the Hodge modules satisfy the Zariski descent and Kashiwara's equivalence. In particular, the restriction functors~\eqref{eq:resopen} to open subsets of smooth schemes preserve the subcategories of Hodge modules and the definition of these functors easily extends to the case of singular schemes. The same applies to the direct image functors~\eqref{eq:closedembedding}.

We can easily extend the definition of Hodge modules to non-reduced schemes by setting $\MH(Y,n):=\MH(Y_{red},n)$. Kashiwara's equivalence and the Zariski locality extend to this case. One also extends the definition of the functor $\Rat\colon\MH(Y,n)\to\Perv_\Q(Y)$ to not necessarily reduced schemes. Note that the support of a Hodge module is always reduced.

\begin{remark}
We can also extend the definition of Hodge modules to not necessarily separated schemes locally of finite type over $\C$ using Zariski locality. However, we will not need it. In fact, all our schemes will be quasi-projective. Thus, we leave the generalization to the reader.
\end{remark}

\subsubsection{Hodge modules with strict support and polarizable variations of Hodge structures}\label{sect:HodgeStrict}
Let $Y$ be any separated scheme of finite type over $\C$. Let $Z\subset Y$ be an integral closed subscheme. Recall from~\cite[Sect.~2.1]{SaitoMixed} that $\cF\in\MH(Y,n)$ \emph{has strict support\/} $Z$ if $\supp\cF=Z$ and $\Rat(\cF)$ is an IC-sheaf (that is, $\Rat(\cF)$ has neither quotient objects nor subobjects supported on a proper subscheme of $Z$, cf.~also~\cite[p.~3]{SaitoPolarizable}). Denote the full subcategory of Hodge modules of weight $n$ with strict support $Z$ by $\MH_Z(Y,n)$. If $Z$ is an integral separated scheme of finite type over $\C$ we set $\MH_Z(n):=\MH_Z(Z,n)$.

Let $U$ be an integral smooth scheme. Recall that a rational variation of Hodge structures (VHS for short) of weight $n$ on $U$ is a pair $(\cS,F_\bullet)$, where $\cS$ is a locally constant sheaf of $\Q$-vector spaces, $F_\bullet$ is a filtration of $\cS\otimes_\Q\cO_U$ satisfying Griffiths transversality condition and such that for all $u\in U$ the pair $(\cS_u,(F_\bullet)_u)$ is a Hodge structure of weight $n$. We note that in this case $\cS[\dim U]$ is a perverse sheaf and $\DR^{-1}(\cS[\dim U]\otimes_\Q\C)=(\cS\otimes_\Q\cO_U,\nabla)$, where $\nabla$ is the connection whose sheaf of flat sections is $\cS\otimes_\Q\C$. Then $F_\bullet$ is a good filtration of $(\cS\otimes_\Q\cO_U,\nabla)$ and we see that a VHS is a particular case of a filtered $\EuD$-module with rational structure.

Denote by $\VHS(U,n)$ the category of polarizable variations of Hodge structure of weight $n$ with coefficients in $\Q$ with quasi-unipotent local monodromies. If $W\subset U$ is an open subscheme, then we have a restriction functor
\[
    \VHS(U,n)\to\VHS(W,n).
\]
For an integral separated scheme $Z$ of finite type over $\C$, let $\VHS_{gen}(Z,n)$ denote the direct limit of categories $\VHS(U,n)$, where $U$ ranges over smooth open non-empty subschemes of $Z$.

\begin{proposition}\label{pr:Saito3.21} Let $Z$ be an integral separated scheme of finite type over $\C$. Then there is a canonical equivalence of categories
\[
    \VHS_{gen}(Z,n-\dim Z)\xrightarrow{\simeq}\MH_Z(n)
\]
satisfying the following compatibilities\\
(i) If $Z$ is smooth, then this equivalence associates to a VHS on $Z$ the corresponding filtered $\EuD$-module with rational structure.\\
(ii) If $j\colon Z'\subset Z$ is an open embedding of a non-empty subscheme, then we have a canonical commutative diagram  of functors
\[
\begin{CD}
    \VHS_{gen}(Z,n-\dim Z)@>\simeq>>\MH_Z(n)\\
    @| @VV j^* V\\
    \VHS_{gen}(Z',n-\dim Z')@>\simeq>>\MH_{Z'}(n)
\end{CD}
\]
In particular, $j^*$ is an equivalence of categories.
\end{proposition}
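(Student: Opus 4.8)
The plan is to build the equivalence first in the smooth case and then propagate it to the general case via the restriction functors, which forces the weight shift by $\dim Z$. Concretely, I would first treat the case where $Z$ is smooth. Here Saito's structure theorem for pure Hodge modules with strict support (\cite[Thm.~3.21]{SaitoMixed}, together with the results of~\cite[Sect.~5]{SaitoPolarizable}) identifies $\MH_Z(Z,n)$ with the category of polarizable VHS on $Z$ of weight $n-\dim Z$ and quasi-unipotent local monodromy, the identification sending a VHS $(\cS,F_\bullet)$ to the filtered $\EuD$-module with rational structure $\bigl(\cS\otimes_\Q\cO_Z,\nabla,\cS[\dim Z],\mathrm{can}\bigr)$ as spelled out in Section~\ref{sect:HodgeStrict}. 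This is exactly compatibility~(i), so in the smooth case the proposition is essentially a citation. For a general $U\subset Z$ open, smooth, nonempty, Kashiwara's equivalence and Zariski descent (recalled in Section~\ref{sect:HodgeOnSch}) give a restriction functor $j_U^*\colon\MH_Z(Z,n)\to\MH_U(U,n)$ at the level of Hodge modules.

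Next I would show that for smooth open $U\subset Z$ the restriction $j_U^*\colon\MH_Z(n)\to\MH_U(n)$ is \emph{fully faithful}. Faithfulness is immediate since $\Rat$ is faithful and restriction of an IC-sheaf to an open dense subset is injective on $\Hom$'s (a nonzero map of IC-sheaves that vanishes on a dense open has image supported on a proper closed subset, contradicting strict support). For fullness, given $\cF,\cG\in\MH_Z(n)$ and a morphism $\phi\colon j_U^*\cF\to j_U^*\cG$ in $\MH_U(U,n)$, the underlying perverse-sheaf morphism extends uniquely to a morphism $\Rat(\cF)\to\Rat(\cG)$ by the IC extension property; one then checks this extension is a morphism of Hodge modules using that $\MH_Z(Z,n)$ is a full subcategory of $\FM_{rs}(\EuD_Z)$ and that the filtration condition can be checked on the dense open $U$ (the filtered pieces are $\cO$-coherent, hence determined by their generic behaviour on the integral scheme $Z$; alternatively one invokes that $\MH$ is a full abelian subcategory stable under subquotients, so the image of $\phi$, extended to $\Rat$, underlies a sub-Hodge-module, which by strict support must be the graph of a genuine morphism). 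Passing to the colimit over smooth open $U$, the functors $j_U^*$ assemble into a functor $\MH_Z(n)\to\VHS_{gen}(Z,n-\dim Z)$ — here I use that $\dim U=\dim Z$ so the weight bookkeeping is consistent — and this functor is fully faithful by the above.

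It then remains to prove \emph{essential surjectivity} onto $\MH_Z(n)$: every $\cF\in\MH_Z(n)$ arises from some VHS on a smooth open $U$. This is exactly the content of Saito's theorem that a pure Hodge module with strict support $Z$ is the intermediate (IC) extension of a polarizable VHS on a smooth Zariski-open locus of $Z$ — one takes $U$ inside the smooth locus of $Z$ over which $\Rat(\cF)$ restricts to a (shifted) local system, applies the smooth case to get the VHS, and invokes that the minimal extension of that VHS reconstructs $\cF$ (uniqueness of the IC extension on the Hodge side). Defining the desired equivalence as the quasi-inverse of the fully faithful functor just constructed, compatibility~(i) holds by the smooth case, and compatibility~(ii) — the commuting square with $j\colon Z'\hookrightarrow Z$ — is then automatic, because on the VHS side the vertical arrow is the identity on germs over any common smaller open $W\subset Z'$, while on the Hodge side $j^*$ is, up to the identifications just built, literally the transition functor in the colimit; that $j^*$ is an equivalence follows formally since both categories are identified with $\VHS_{gen}$ of the same scheme (shrinking to a dense open does not change the generic category). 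The main obstacle I anticipate is the fullness step: checking that a morphism of VHS on a dense open genuinely extends to a morphism of \emph{filtered} $\EuD$-modules (not merely of perverse sheaves) over all of $Z$, which is where Saito's strictness results and the stability of $\MH$ under subobjects in $\FM_{rs}$ must be used carefully, and where the hypothesis of quasi-unipotent local monodromy enters to guarantee the extended object is again a (polarizable) Hodge module rather than just a filtered holonomic module.
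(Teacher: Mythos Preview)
Your proposal is correct in outline and ultimately reaches the same conclusion, but it does far more work than the paper. The paper's proof is a two-line citation: the equivalence is precisely \cite[Thm.~3.21]{SaitoMixed}, which already treats arbitrary integral $Z$ (not only the smooth case), and the compatibility with open restriction is read off from Saito's construction. You instead reconstruct the content of Saito's theorem by bootstrapping from the smooth case via the colimit over smooth dense opens; this is essentially how Saito's proof is organized, so your route is not genuinely different, just an unpacking of the cited reference.

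One technical point in your fullness argument deserves care. You write that one checks the extended morphism is a morphism of Hodge modules ``using that $\MH_Z(Z,n)$ is a full subcategory of $\FM_{rs}(\EuD_Z)$'', but $\FM_{rs}(\EuD_Z)$ is only defined when $Z$ is smooth; for singular $Z$ the category $\MH_Z(Z,n)$ is defined via local closed embeddings into smooth ambient varieties (Section~\ref{sect:HodgeOnSch}), so the filtration lives on an ambient $\EuD$-module and your ``coherent sheaves are determined generically'' heuristic does not apply directly. Your alternative route---using that $\MH$ is stable under subquotients, so the graph of the extended perverse-sheaf morphism underlies a Hodge submodule, and strict support forces this to be a genuine morphism---is the correct one and works uniformly. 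The obstacle you flag at the end (extending a morphism of VHS to a morphism of filtered $\EuD$-modules across the boundary) is exactly what Saito's strictness and polarizability results handle, and is why the paper simply defers to \cite{SaitoMixed} rather than redoing the argument.
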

\begin{proof}
  The construction of this equivalence of categories is~\cite[Thm.~3.21]{SaitoMixed}. The compatibility with restriction to open subsets is clear from the proof.
\end{proof}

In particular, $\MH(pt,n)$ is the category of rational polarizable Hodge structures of weight~$n$. Note that the functor $\Rat$ associates to a Hodge structure, viewed as an object of $\MH(pt,n)$, the underlying vector space.


\subsection{Hodge modules on ind-schemes} All the ind-schemes we consider are strict ind-separated ind-schemes of ind-finite type over $\C$. That is, we work with \'etale sheaves $Y\colon\mathrm{Aff}/\C\to\mathrm{Sets}^{op}$ representable as $Y=\lim_{l\to\infty} Y^{(l)}$, where $Y^{(l)}$ are separated schemes of finite type over $\C$, and for $l<l'$ the morphism $Y^{(l)}\to Y^{(l')}$ is a closed embedding. Recall that any morphism $Z\to Y$, where $Z$ is a scheme of finite type over $\C$, factors through some $Y^{(l)}$.

We define
\[
    \MH(Y,n):=\lim_{l\to\infty}\MH(Y^{(l)},n),
\]
where the direct limit is with respect to the system of direct image functors corresponding to closed embeddings $Y^{(l)}\to Y^{(l')}$, $l<l'$. By Kashiwara's equivalence, this category does not depend on the presentation of $Y$ as a limit of schemes. Explicitly, a Hodge module on $Y$ is a pair $(l,\cF)$, where $\cF$ is a Hodge module on $Y^{(l)}$.


We note that the definitions of the functor of restriction to an open subscheme and of the direct image under a closed embedding extend easily to the case of ind-schemes. One also extends the definition of the functor $\Rat$ to the case of ind-schemes.

If $Y$ is a scheme viewed as an ind-scheme, then $\MH(Y,n)$ defined above is the same as $\MH(Y,n)$ discussed in Section~\ref{sect:HodgeOnSch}.

We easily extend the notion of support to Hodge modules on ind-schemes. We emphasize that the support of a Hodge module is a reduced subscheme of $Y$ (not an ind-subscheme). The notion of a Hodge module with strict support also extends to the case of ind-schemes. Kashiwara's equivalence gives a canonical identification $\MH_Z(Y,n)=\MH_Z(n)$, whenever $Z$ is an integral subscheme of $Y$. We have the following decomposition
\begin{equation}\label{eq:support_decomposition}
  \MH(Y,n)=\bigoplus_{Z\subset Y}\MH_Z(Y,n)\simeq\bigoplus_{Z\subset Y}\MH_Z(n),
\end{equation}
where the direct sum is over integral closed subschemes of $Y$. Indeed, if $Y$ is a smooth scheme, then this follows from~\cite[p.~852--853]{SaitoPolarizable}. If $Y$ can be embedded into a smooth scheme, this follows from Kashiwara's equivalence. Thus, if $Y$ is any reduced scheme, then the decomposition exists locally. But the decomposition of a Hodge module into the direct sum of modules with strict support is unique because there are no morphisms between Hodge modules with different strict supports. Thus the global decomposition follows from the local one. Finally, the case of ind-schemes is reduced to the case of reduced schemes using again Kashiwara's equivalence.

\begin{proposition}\label{pr:HM_basic_properties}
Let $Y$ be a strict ind-scheme ind-separated and of ind-finite type over $\C$. Then

\stepzero\noindstep\label{basic:i} $\MH(Y,n)$ is a $\Q$-linear abelian semisimple category.

\noindstep\label{basic:ii} The functor $\Rat\colon\MH(Y,n)\to\Perv_\Q(Y)$ is exact, faithful, and conservative. It is compatible with the direct image functor for closed embeddings and with restrictions to open sub-ind-schemes.
\end{proposition}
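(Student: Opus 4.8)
The plan is to reduce both assertions to the case of separated schemes of finite type, where they are part of Saito's theory \cite{SaitoPolarizable}, and then to pass to the filtered colimit $\MH(Y,n)=\varinjlim_l\MH(Y^{(l)},n)$. The key structural fact needed about the transition functors is that, for a closed embedding $i\colon Y^{(l)}\hookrightarrow Y^{(l')}$, the direct image $i_*\colon\MH(Y^{(l)},n)\to\MH(Y^{(l')},n)$ is exact and fully faithful and identifies the source with the full subcategory of $\MH(Y^{(l')},n)$ of objects supported on $Y^{(l)}$; by the support decomposition~\eqref{eq:support_decomposition} this subcategory is closed under subobjects and quotients, i.e.\ it is a Serre subcategory. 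Full faithfulness and the description of the essential image are Kashiwara's equivalence; exactness follows from the exactness of the direct image of $\EuD$-modules (resp.\ of perverse sheaves) under a closed embedding, together with the fact that $\Rat$ on a finite-type scheme is exact and faithful and hence reflects exact sequences.

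For assertion~\ref{basic:i}, I first recall that for a separated scheme $Y^{(l)}$ of finite type the category $\MH(Y^{(l)},n)$ is $\Q$-linear, abelian and semisimple \cite{SaitoPolarizable}, and that every object has finite length: since $\Rat(\cF)$ lies in the finite-length category $\Perv_\Q(Y^{(l)})$ and $\Rat$ is faithful and exact, length is additive along short exact sequences in $\MH(Y^{(l)},n)$ and bounded by the length on the perverse side. In a filtered colimit of abelian categories along exact transition functors the colimit is again abelian, with kernels, cokernels and finite biproducts computed at a finite level; since a morphism of $\MH(Y,n)$ together with its kernel and cokernel can be represented inside a single $\MH(Y^{(l)},n)$, the category $\MH(Y,n)$ is $\Q$-linear and abelian. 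Writing an object as $(l,\cF)$ and decomposing $\cF$ into simples in $\MH(Y^{(l)},n)$ exhibits $(l,\cF)$ as a finite direct sum of objects $(l,S)$ with $S$ simple; each such $(l,S)$ is simple in $\MH(Y,n)$ because any nonzero subobject is represented by a monomorphism $T\hookrightarrow i_*S$ in some $\MH(Y^{(l')},n)$, whose source is supported on $Y^{(l)}$ by the Serre property, hence of the form $i_*T'$ with $0\ne T'\subseteq S$, forcing $T'=S$. Thus $\MH(Y,n)$ is semisimple; independence of the presentation of $Y$ as a limit of schemes is again Kashiwara's equivalence, as already remarked.

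For assertion~\ref{basic:ii}, recall that $\Rat$ on $Y$ is defined level-wise, that $\Perv_\Q(Y)=\varinjlim_l\Perv_\Q(Y^{(l)})$ along the (exact, fully faithful) direct images under the closed embeddings, and that these presentations are compatible with $\Rat$ on each $Y^{(l)}$, where $\Rat$ is exact and faithful by Saito's construction. Since morphisms of $\MH(Y,n)$, and their kernels and cokernels, are computed at a finite level, and $\Rat$ intertwines the closed-embedding direct images on the two sides, $\Rat$ on $Y$ is exact and faithful. Conservativity is then automatic: a faithful additive functor reflects zero objects (if $\Rat(X)=0$ then the identity morphism of $X$ is sent to $0$, forcing $X=0$), hence if $\Rat(f)$ is an isomorphism then $\Rat(\ker f)=\ker\Rat(f)=0$ and likewise $\Rat(\coker f)=0$, so $f$ is an isomorphism; alternatively this is immediate from semisimplicity. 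Finally, compatibility of $\Rat$ with direct images under closed embeddings of ind-schemes and with restrictions to open sub-ind-schemes is checked at finite level, where it reduces to the known compatibility of $\Rat$ with $i_*$ and $j^*$ on schemes, that is, to the fact that the de Rham functor commutes with these operations.

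The only real difficulty is the bookkeeping around the filtered colimit: one must verify carefully that via $i_*$ each $\MH(Y^{(l)},n)$ sits inside $\MH(Y^{(l')},n)$ as a Serre subcategory (so that simplicity, and hence semisimplicity, survives in the colimit) and that $\Rat$ is compatible with these identifications on both the Hodge-theoretic and the perverse side. Once this is granted, everything else is formal, and no Hodge-theoretic input beyond Saito's results for finite-type schemes and Kashiwara's equivalence enters.
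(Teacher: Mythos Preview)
Your argument is correct, but it is organised differently from the paper's proof. For part~\eqref{basic:i} the paper does not unwind the filtered colimit at all: it invokes the support decomposition~\eqref{eq:support_decomposition}, which has already been established for ind-schemes, and then quotes \cite[Lemme~5]{SaitoPolarizable} to the effect that each $\MH_Z(n)$ is abelian semisimple; semisimplicity of the direct sum is then immediate. Your approach instead verifies directly that the transition functors $i_*$ realise each $\MH(Y^{(l)},n)$ as a Serre subcategory of the next, and then checks by hand that simple objects stay simple in the colimit. Both routes work; the paper's is shorter because the bookkeeping you describe as ``the only real difficulty'' has in effect been absorbed into the proof of~\eqref{eq:support_decomposition}. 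For part~\eqref{basic:ii} the paper also takes a slightly different line on exactness: rather than reducing to finite level, it observes that any additive functor out of a semisimple abelian category is automatically exact, using the just-proved part~\eqref{basic:i}. For faithfulness the paper reduces to the case of a smooth scheme and uses that $\MH(Y,n)$ sits as a full subcategory of $\FM_{rs}(\EuD_Y)$, whereas you pass through the colimit; both are fine. Your conservativity argument (faithful exact $\Rightarrow$ reflects zero objects $\Rightarrow$ reflects isomorphisms) is essentially the same as the paper's remark that faithful functors between abelian categories are conservative because abelian categories are balanced.
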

\begin{proof}
\eqref{basic:i} $\Q$-linearity is obvious from the definition. Let $Z$ be a closed integral subscheme of $Y$. Then according to~\cite[Lemme~5]{SaitoPolarizable} $\MH_Z(n)$ is abelian and semisimple. It remains to use~\eqref{eq:support_decomposition}.

\eqref{basic:ii} The compatibility with closed and open embeddings is clear. It is exact because every additive functor from a semisimple category is exact. To show that the functor is faithful, we first reduce the statement to the case when $Y$ is a smooth scheme and then use the fact that $\MH(Y,n)$ is a full subcategory of $\FM_{rs}(\EuD_Y)$. Finally, any faithful functor between abelian categories is conservative, since abelian categories are balanced.
\end{proof}

\subsection{Derived categories of polarizable Hodge modules}\label{sect:DerHodge} For a reduced separated scheme $Y$ of finite type over $\C$ Saito defines the abelian category of mixed polarizable Hodge modules $\MHM(Y)$ (\cite[(4.2.3)]{SaitoMixed}) and its bounded derived category $D^b\MHM(Y)$. By~\cite[2.17.5]{SaitoMixed} mixed Hodge modules satisfy Kashiwara's equivalence. By~\cite[4.2.10]{SaitoMixed} the derived category $D^b\MHM(Y)$ satisfies Kashiwara's equivalence as well. Thus we can extend the definitions of the categories to the case, when $Y$ is an ind-scheme. We still have a faithful exact functor $\Rat\colon\MHM(Y)\to\Perv_\Q(Y)$.

Every mixed Hodge module has a weight filtration whose associated graded is a pure Hodge module. The category $\MH(Y,n)$ is a full subcategory of $\MHM(Y)$.


We define the derived category $D^b\MH(Y,n)$ of pure Hodge modules of weight $n$ as the full subcategory of $D^b\MHM(Y)$ consisting of complexes $\cF^\bullet$ such that for all $i\in\Z$ we have $H^i(\cF^\bullet)\in\MH(Y,n+i)$. We identify $\MH(Y,n)$ with the full subcategory of $D^b\MH(Y,n)$ consisting of complexes whose cohomology is concentrated in degree~0.

\begin{proposition}\label{pr:KashDb}
(i) The functor
\begin{equation}\label{eq:KashDb}
    \bigoplus_{i\in\Z}\MH(Y,n+i)\to D^b\MH(Y,n)\colon\bigoplus_{i\in\Z}\cF_i\mapsto\bigoplus_{i\in\Z}\cF_i[-i]
\end{equation}
is faithful and essentially surjective. A right inverse functor is given by
\[
    D^b\MH(Y,n)\to\bigoplus_{i\in\Z}\MH(Y,n+i)\colon\cF^\bullet\mapsto\bigoplus_{i\in\Z} H^i(\cF^\bullet).
\]
(ii) Let $i\colon Y\to Y'$ be a closed embedding. Then there is a fully faithful canonical functor $i_*\colon D^b\MH(Y,n)\to D^b\MH(Y',n)$ compatible with functors from part~(i) whose essential image consists of complexes whose cohomology are supported on $Y$.
\end{proposition}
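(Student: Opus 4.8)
The plan is to deduce everything from the corresponding facts about $D^b\MHM(Y)$ together with the semisimplicity of each $\MH(Y,m)$ established in Proposition~\ref{pr:HM_basic_properties}\eqref{basic:i}. For part~(i), I would first treat the case when $Y$ is a reduced separated scheme of finite type, and then reduce the ind-scheme case to it. Over such a $Y$, the key point is that for a complex $\cF^\bullet\in D^b\MH(Y,n)$ the cohomology objects $H^i(\cF^\bullet)$ lie in $\MH(Y,n+i)$, and these categories are semisimple with $\Hom(\MH(Y,m),\MH(Y,m'))=0$ for $m\ne m'$ (no nonzero morphisms between pure Hodge modules of different weights, by strictness/rigidity of the weight filtration). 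I would argue that the standard $t$-structure on $D^b\MHM(Y)$ restricts to one on $D^b\MH(Y,n)$ whose heart is $\bigoplus_i\MH(Y,n+i)$ placed in the appropriate degrees, and that this heart is semisimple; a bounded complex over a semisimple abelian category is (non-canonically) isomorphic to the direct sum of its shifted cohomology objects. This gives essential surjectivity of~\eqref{eq:KashDb} and exhibits $\cF^\bullet\mapsto\bigoplus_i H^i(\cF^\bullet)$ as a right inverse; faithfulness of~\eqref{eq:KashDb} is immediate since on each summand it is just a shift, hence fully faithful into $D^b\MHM(Y)$, and the summands map to complexes concentrated in distinct degrees.

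The one genuinely delicate point in part~(i) is that the splitting of a bounded complex into its cohomology is \emph{not} canonical, so one must check that the stated ``right inverse'' really is a right inverse on the nose, i.e.\ that applying~\eqref{eq:KashDb} and then $\bigoplus H^i$ returns the original object of $\bigoplus_i\MH(Y,n+i)$ (which is clear, since $H^j(\cF_i[-i])=\cF_i$ if $j=i$ and $0$ otherwise, using again the weight disjointness to see no cross terms appear). I expect this to be the main thing to state carefully, since it is the only place where the direction of ``inverse'' matters.

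For part~(ii), I would invoke that $i_*\colon D^b\MHM(Y)\to D^b\MHM(Y')$ is fully faithful with essential image the complexes whose cohomology sheaves are supported on $Y$ --- this is Kashiwara's equivalence in the derived setting, \cite[4.2.10]{SaitoMixed}, already cited in Section~\ref{sect:DerHodge}. Since $i_*$ is $t$-exact for the natural $t$-structures (the direct image under a closed embedding is exact on Hodge modules and preserves weights, as recorded after~\eqref{eq:closedembedding} and in Proposition~\ref{pr:HM_basic_properties}\eqref{basic:ii}), it carries $D^b\MH(Y,n)$ into $D^b\MH(Y',n)$ and is compatible with the functors of part~(i) via $i_*(\cF_i[-i])=(i_*\cF_i)[-i]$. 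Full faithfulness is inherited from full faithfulness on $D^b\MHM$. For the essential image: a complex in $D^b\MH(Y',n)$ with cohomology supported on $Y$ lies, as an object of $D^b\MHM(Y')$, in the essential image of $i_*$; writing it as $i_*\cG^\bullet$ and taking cohomology, $H^j(\cG^\bullet)$ maps under the weight-preserving exact functor $i_*$ to $H^j(i_*\cG^\bullet)\in\MH(Y',n+j)$, and since $i_*$ reflects weights (it is fully faithful and exact, so an isomorphism onto its image of hearts) we get $H^j(\cG^\bullet)\in\MH(Y,n+j)$, i.e.\ $\cG^\bullet\in D^b\MH(Y,n)$. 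Finally, the passage to ind-schemes $Y=\lim Y^{(l)}$ is routine: both sides of~\eqref{eq:KashDb} are defined as direct limits along the closed-embedding transition functors $Y^{(l)}\to Y^{(l')}$, part~(ii) says precisely that these transition functors are fully faithful and identify $D^b\MH(Y^{(l)},n)$ with a full subcategory of $D^b\MH(Y^{(l')},n)$, so the limit statements follow by passing to the colimit of categories, using Kashiwara's equivalence to see independence of the presentation exactly as in the definition of $\MH(Y,n)$ for ind-schemes.
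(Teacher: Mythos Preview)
Your argument for part~(ii) and the ind-scheme reduction is fine and matches what the paper has in mind (``the rest follows from the definitions'', using~\cite[4.2.10]{SaitoMixed}).

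The gap is in your argument for essential surjectivity in part~(i). You write that ``a bounded complex over a semisimple abelian category is (non-canonically) isomorphic to the direct sum of its shifted cohomology objects'', invoking semisimplicity of $\bigoplus_i\MH(Y,n+i)$. But $D^b\MH(Y,n)$ is by definition a full subcategory of $D^b\MHM(Y)$, \emph{not} the bounded derived category of its heart; the ambient category $\MHM(Y)$ is not semisimple. Concretely, to split the truncation triangles in $D^b\MHM(Y)$ you need $\Ext^{j-i+1}_{\MHM(Y)}(H^j,H^i)=0$ for $j>i$, where $H^j$ is pure of weight $n+j$ and $H^i$ of weight $n+i$. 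Semisimplicity of each $\MH(Y,m)$ tells you nothing about these higher Ext groups in $\MHM(Y)$; indeed the Remark immediately following the proposition exhibits a nonzero $\Ext^1$ between pure modules of adjacent weights, showing explicitly that $D^b\MH(Y,n)$ has more morphisms than the derived category of the semisimple heart would. (Relatedly, $D^b\MH(Y,n)$ is not even a triangulated subcategory of $D^b\MHM(Y)$: it is not closed under cones or shifts, so speaking of a $t$-structure on it is not quite right.)

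What actually makes the splitting work is a purity statement: Saito proves in~\cite[(4.5.4)]{SaitoMixed} that a complex in $D^b\MHM(Y)$ whose $i$-th cohomology is pure of weight $n+i$ decomposes non-canonically as the sum of its shifted cohomology objects. This is a genuine weight-filtration argument, the mixed-Hodge-module analogue of Deligne's decomposition, and it is exactly what the paper cites. Once you invoke that, the rest of your outline goes through.
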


\begin{proof}
  The essential surjectivity in part~(i) follows easily from~\cite[(4.5.4)]{SaitoMixed} 
  in the case when $Y$ is a reduced scheme. It easily extends to the case of ind-schemes. The rest of the proposition follows from the definitions.
\end{proof}

\begin{remark}
Note that functor~\eqref{eq:KashDb} is not full. Indeed, let a mixed Hodge module $\cF$ on $Y$ be an extension of a pure Hodge module $\cF^1$ of weight 1 by a pure Hodge module $\cF^0$ of weight $0$. Then we have in $D^b\MH(Y,1)$: $\Hom(\cF^1,\cF^0[1])=\Ext^1(\cF^1,\cF^0)\ne0$.
\end{remark}

\begin{proposition}\label{pr:DerRat}
  There is a canonical functor $\Rat\colon D^b\MH(Y,n)\to D^b\Perv_\Q(Y)$. This functor is exact and conservative. It is also compatible with the cohomology functor and $\Rat\colon\MH(Y,n)\to\Perv_\Q(Y)$.
\end{proposition}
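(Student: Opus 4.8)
The plan is to construct $\Rat$ on $D^b\MH(Y,n)$ by simply restricting the derived functor of the exact functor $\Rat\colon\MHM(Y)\to\Perv_\Q(Y)$ recalled in Section~\ref{sect:DerHodge}, and then to read off every asserted property from the corresponding property on the abelian level. So the statement is essentially a bookkeeping lemma.

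Concretely, the first step is to observe that, since $\Rat\colon\MHM(Y)\to\Perv_\Q(Y)$ is exact, applying it termwise to complexes preserves quasi-isomorphisms and hence descends to a triangulated functor $D^b\MHM(Y)\to D^b\Perv_\Q(Y)$ (there is no derived-functor subtlety precisely because $\Rat$ is exact). Being defined termwise, this functor commutes on the nose with the cohomology functors $H^i$ and restricts on the hearts to $\Rat\colon\MHM(Y)\to\Perv_\Q(Y)$, in particular to $\Rat\colon\MH(Y,n)\to\Perv_\Q(Y)$. The second step is to recall that $D^b\MH(Y,n)$ is, by definition, a full subcategory of $D^b\MHM(Y)$, and to define the functor of the proposition as the restriction of the triangulated functor just constructed. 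Exactness (it is the restriction of a triangulated functor) and compatibility with the cohomology functor are then immediate, using that by the very definition of $D^b\MH(Y,n)$ one has $H^i(\cF^\bullet)\in\MH(Y,n+i)$ for $\cF^\bullet\in D^b\MH(Y,n)$, so that $H^i$ does land in $\MH(Y,n+i)$; and compatibility with $\Rat$ on $\MH(Y,n)$ holds because, under the identification of $\MH(Y,n)$ with the complexes concentrated in degree $0$, the termwise functor is literally $\Rat$ in degree $0$.

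The only assertion requiring a short argument is conservativity, which I would deduce from the faithfulness of $\Rat$ on $\MHM(Y)$. Given a morphism $f$ in $D^b\MH(Y,n)$ with $\Rat(f)$ invertible in $D^b\Perv_\Q(Y)$, compatibility with $H^i$ shows that each $H^i(\Rat(f))=\Rat(H^i(f))$ is invertible; since a faithful exact functor between abelian categories is conservative (abelian categories being balanced), each $H^i(f)$ is invertible in $\MHM(Y)$; and a morphism in the bounded derived category of an abelian category inducing isomorphisms on all cohomology objects is itself an isomorphism, so $f$ is invertible in $D^b\MHM(Y)$, hence in the full subcategory $D^b\MH(Y,n)$.

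I do not anticipate a genuine obstacle: all inputs — exactness and faithfulness of $\Rat$ on $\MHM(Y)$, and the description of $D^b\MH(Y,n)$ from Proposition~\ref{pr:KashDb} — are already in place. The one point to treat with a little care is the passage to ind-schemes: one must check that the termwise $\Rat$ commutes with the direct image functors along the closed embeddings $Y^{(l)}\to Y^{(l')}$, so that it passes to the direct limit defining $D^b\MHM(Y)$ and $D^b\MH(Y,n)$. This commutation is the derived-category analogue of the compatibility of $\Rat$ with closed embeddings already recorded for Hodge modules, and so presents no real difficulty.
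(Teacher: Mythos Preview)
Your proposal is correct and follows essentially the same route as the paper: restrict the exact derived functor $\Rat\colon D^b\MHM(Y)\to D^b\Perv_\Q(Y)$ to the full subcategory $D^b\MH(Y,n)$, read off exactness and compatibility with $H^i$ from termwise exactness, and deduce conservativity by passing to cohomology and using that $\Rat$ is faithful (hence conservative) on the abelian level. The only cosmetic difference is that the paper cites Saito's~\cite[Thm.~0.1]{SaitoMixed} for the derived functor and invokes Proposition~\ref{pr:HM_basic_properties}\eqref{basic:ii} (conservativity of $\Rat$ on $\MH(Y,n+i)$) rather than on $\MHM(Y)$, but the argument is the same.
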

\begin{proof}
  The proposition is easily reduced to the case, when $Y$ is a reduced scheme. We define the functor by restricting the exact functor $\Rat\colon D^b\MHM(Y)\to D^b\Perv_\Q(Y)$ from~\cite[Thm.~0.1]{SaitoMixed}.

  Let $f$ be a morphism in $D^b\MH(Y)$ such that $\Rat(f)$ is an isomorphism. Then $\Rat(H^i(f))=H^i(\Rat(f))$ is an isomorphism for all $i$. By Proposition~\ref{pr:HM_basic_properties}\eqref{basic:ii} $H^i(f)$ is an isomorphism. Thus $f$ is an isomorphism. We see that $\Rat$ is conservative. The compatibility with cohomology follows from definitions.
\end{proof}

\subsection{The intermediate extension functor}
Let $j\colon Y'\to Y$ be a locally closed embedding and let $Z\subset Y'$ be an integral closed subscheme. Then we have by Kashiwara's equivalence and Proposition~\ref{pr:Saito3.21}(ii) an equivalence
\begin{equation*}
    j_{!*}\colon\MH_{Z}(Y',n)\xrightarrow{\simeq}\MH_Z(n)
    \xrightarrow{\simeq}\MH_{\overline Z}(n)\xrightarrow{\simeq}\MH_{\overline Z}(Y,n),
\end{equation*}
where $\overline Z$ is the Zariski closure of $Z$ in $Y$. We extend $j_{!*}$ to a fully faithful functor called \emph{intermediate extension}
\[
    j_{!*}\colon\MH(Y',n)\to\MH(Y,n)
\]
using~\eqref{eq:support_decomposition}.

The next lemma follows from definitions and Proposition~\ref{pr:Saito3.21}.
\begin{lemma}\label{lm:j*composition}
(i) If $j\colon Y_1\to Y_2$ is an open embedding, then $j^*$ is left inverse to $j_{!*}$.

(ii) If $j\colon Y_1\to Y_2$ is a closed embedding, then $j_{!*}=j_*$.

(iii) Let $j\colon Y_1\to Y_2$ and $j'\colon Y_2\to Y_3$ be locally closed embeddings. Then $(j'\circ j)_{!*}=j'_{!*}\circ j_{!*}$.
\end{lemma}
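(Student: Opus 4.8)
The plan is to reduce each of the three assertions to a single strict-support summand and then unwind the construction of $j_{!*}$. By the decomposition~\eqref{eq:support_decomposition}, every Hodge module on $Y'$ is a finite direct sum of modules with strict support, and $j_{!*}$ is additive and respects this decomposition, so it suffices to work on $\MH_Z(Y',n)$ for an integral closed subscheme $Z\subset Y'$. On this summand $j_{!*}$ is, by construction, the composite
\[
    \MH_Z(Y',n)\xrightarrow{\simeq}\MH_Z(n)\xrightarrow{\simeq}\MH_{\overline Z}(n)\xrightarrow{\simeq}\MH_{\overline Z}(Y,n),
\]
where the two outer arrows are instances of Kashiwara's equivalence --- equivalently, of the direct image~\eqref{eq:closedembedding} along the closed embeddings $Z\hookrightarrow Y'$ and $\overline Z\hookrightarrow Y$, which is how Kashiwara's equivalence is normalized in Section~\ref{sect:HodgeOnSch} --- and the middle arrow is the inverse of the restriction equivalence attached by Proposition~\ref{pr:Saito3.21}(ii) to the dense open embedding $Z\hookrightarrow\overline Z$. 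Two facts will be used repeatedly: the direct image~\eqref{eq:closedembedding} commutes with restriction to open subschemes (base change), and restriction on $\VHS_{gen}$ is functorial in the open subscheme, hence is the identity along a dense open embedding and composes along a tower of open embeddings (Proposition~\ref{pr:Saito3.21}(ii)).

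For (i), with $j\colon Y_1\hookrightarrow Y_2$ open, one first records the elementary fact that $\overline Z\cap Y_1=Z$, so that $j^*$ carries $\MH_{\overline Z}(Y_2,n)$ to $\MH_Z(Y_1,n)$ and the embeddings $Z\hookrightarrow Y_1$, $\overline Z\hookrightarrow Y_2$, $Z\hookrightarrow\overline Z$ sit in a Cartesian square. Base change then identifies $j^*\colon\MH_{\overline Z}(Y_2,n)\to\MH_Z(Y_1,n)$, through the two Kashiwara equivalences, with the restriction equivalence $\MH_{\overline Z}(n)\to\MH_Z(n)$, that is, with the inverse of the middle arrow; composing the three steps of $j_{!*}$ with $j^*$ then collapses to the identity, and summing over $Z$ gives $j^*\circ j_{!*}=\Id$. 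For (ii), with $j$ closed, every integral closed $Z\subset Y_1$ is already closed in $Y_2$, so $\overline Z=Z$, the middle arrow is the identity, and $j_{!*}$ on $\MH_Z(Y_1,n)$ reduces to the composite of the two Kashiwara equivalences $\MH_Z(Y_1,n)=\MH_Z(n)=\MH_Z(Y_2,n)$, which by the normalization of Kashiwara's equivalence and the compatibility of~\eqref{eq:closedembedding} with composition is exactly $j_*$; since $j_*$, being a direct image along a closed embedding, preserves the strict-support decomposition, summing over $Z$ yields $j_{!*}=j_*$.

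For (iii), let $j\colon Y_1\hookrightarrow Y_2$ and $j'\colon Y_2\hookrightarrow Y_3$ be locally closed and, for an integral closed $Z\subset Y_1$, let $\overline Z_2$ and $\overline Z_3$ be the closures of $Z$ in $Y_2$ and $Y_3$. The topological points to check are that $\overline Z_3$ is also the closure of $\overline Z_2$ in $Y_3$, that $\overline Z_2=\overline Z_3\cap Y_2$ is a dense open subscheme of $\overline Z_3$ (using that $Y_2$ is locally closed in $Y_3$ and that $\overline Z_3$ lies in its closed part), and that the dense open embedding $Z\hookrightarrow\overline Z_3$ factors as $Z\hookrightarrow\overline Z_2\hookrightarrow\overline Z_3$. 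Expanding $j'_{!*}\circ j_{!*}$ on $\MH_Z(Y_1,n)$, the middle pair of Kashiwara equivalences $\MH_{\overline Z_2}(Y_2,n)=\MH_{\overline Z_2}(n)$ cancels, leaving the composite of Kashiwara's equivalence $\MH_Z(Y_1,n)=\MH_Z(n)$, the inverses of the restriction equivalences for $Z\hookrightarrow\overline Z_2$ and for $\overline Z_2\hookrightarrow\overline Z_3$, and Kashiwara's equivalence $\MH_{\overline Z_3}(n)=\MH_{\overline Z_3}(Y_3,n)$. By functoriality of restriction in the open subscheme, the two inner arrows compose to the inverse of the restriction equivalence for $Z\hookrightarrow\overline Z_3$, which is precisely the middle arrow of $(j'\circ j)_{!*}$. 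Hence $(j'\circ j)_{!*}$ and $j'_{!*}\circ j_{!*}$ agree on each summand, and therefore on $\MH(Y_1,n)$.

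All of this is formal once the composites are written out; the only steps that demand genuine care are the elementary topology of Zariski closures in (i) and (iii) --- in particular checking that each embedding of closures that appears is a dense open embedding, which is what lets Proposition~\ref{pr:Saito3.21}(ii) apply --- and keeping track that Kashiwara's equivalence for a closed embedding is, by the normalization of Section~\ref{sect:HodgeOnSch}, exactly the direct image~\eqref{eq:closedembedding}, which is what makes $j_{!*}=j_*$ hold on the nose in (ii).
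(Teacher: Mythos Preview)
Your proof is correct and is precisely the unwinding of definitions the paper has in mind: the paper's own argument is the single sentence ``follows from definitions and Proposition~\ref{pr:Saito3.21}.'' Your reduction to strict-support summands via~\eqref{eq:support_decomposition}, the use of base change for (i), the collapse of the middle arrow for (ii), and the tower argument with the topological check $\overline Z_2=\overline Z_3\cap Y_2$ for (iii) are exactly what that sentence encodes.
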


We have the standard characterization of intermediate extensions.

\begin{lemma}\label{lm:EqualTo!*} Let $j\colon Y'\to Y$ be an open embedding and let $\cF\in\MH(Y,n)$, then the following are equivalent\\
(i) $\Rat(\cF)$ has no subobjects supported on $Y-Y'$.\\
(ii) $\Rat(\cF)$ has no quotient objects supported on $Y-Y'$.\\
(iii) The Hodge modules $j_{!*}j^*\cF$ and $\cF$ are isomorphic.\\
In this case there is a unique isomorphism $j_{!*}j^*\cF\to\cF$ whose restriction to $Y'$ is the identity.
\end{lemma}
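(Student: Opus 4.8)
The plan is to reduce all three conditions to the strict-support decomposition~\eqref{eq:support_decomposition}, where they become transparent. Write $\cF=\bigoplus_Z\cF_Z$ with $\cF_Z\in\MH_Z(Y,n)$, the sum over integral closed subschemes $Z\subset Y$ (note $\cF$ has finite length, e.g.\ because $\Rat(\cF)$ does and $\Rat$ is faithful and exact, so only finitely many $\cF_Z$ are nonzero). Applying $\Rat$ gives $\Rat(\cF)=\bigoplus_Z\Rat(\cF_Z)$, and by the definition of strict support (Section~\ref{sect:HodgeStrict}) each $\Rat(\cF_Z)$ is an IC-sheaf on $Z$, with no sub- and no quotient objects supported on a proper closed subscheme of $Z$. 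I will show that each of (i), (ii), (iii) is equivalent to the single condition $(\star)$: $\cF_Z=0$ for every $Z\subset Y-Y'$.

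Next I would unwind the right-hand side of (iii). For $Z$ with $Z\cap Y'=\emptyset$ one has $j^*\cF_Z=0$; for $Z$ with $Z\cap Y'\ne\emptyset$ the Zariski closure of $Z\cap Y'$ in $Y$ is $Z$, and comparing the definition of $j_{!*}$ on the strict-support piece $\MH_{Z\cap Y'}(Y',n)$ with Proposition~\ref{pr:Saito3.21}(ii) (the restriction $\MH_Z(n)\to\MH_{Z\cap Y'}(n)$ along the dense open $Z\cap Y'\subset Z$ being the equivalence inverse to the one used in defining $j_{!*}$) shows $j_{!*}j^*\cF_Z=\cF_Z$ canonically. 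Hence $j_{!*}j^*\cF=\bigoplus_{Z\cap Y'\ne\emptyset}\cF_Z$, which is canonically a direct summand of $\cF$ with complementary summand $\bigoplus_{Z\subset Y-Y'}\cF_Z$. Since $\cF$ has finite length, $\cF\simeq j_{!*}j^*\cF$ forces the latter to vanish, so (iii) $\iff(\star)$.

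For (i): if $(\star)$ fails then $\Rat(\cF_Z)$ for some $Z\subset Y-Y'$ is a nonzero subobject of $\Rat(\cF)$ supported in $Y-Y'$. Conversely, given $0\ne\cP'\subset\Rat(\cF)$ with $\supp\cP'\subset Y-Y'$, compose the inclusion with the projection onto $\Rat(\cF_Z)$ for each $Z$ with $Z\cap Y'\ne\emptyset$: the image is a subobject of the IC-sheaf $\Rat(\cF_Z)$ supported in the proper closed subscheme $Z\cap(Y-Y')$, hence zero; so $\cP'$ lies in $\bigoplus_{Z\subset Y-Y'}\Rat(\cF_Z)$, and since $\Rat$ is faithful some $\cF_Z$ with $Z\subset Y-Y'$ is nonzero. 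Thus (i) $\iff(\star)$. The equivalence (ii) $\iff(\star)$ is the same argument with subobjects replaced by quotient objects, using that an IC-sheaf has no quotient supported in a proper closed subscheme.

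It remains to pin down the isomorphism. When $(\star)$ holds, the isomorphism $j_{!*}j^*\cF=\bigoplus_{Z\cap Y'\ne\emptyset}\cF_Z=\cF$ produced above restricts to the identity on $Y'$, by Lemma~\ref{lm:j*composition}(i) and the compatibility in Proposition~\ref{pr:Saito3.21}(ii). For uniqueness, the full faithfulness of $j_{!*}$ together with the canonical isomorphism $\cF\simeq j_{!*}j^*\cF$ gives
\[
    \Hom(j_{!*}j^*\cF,\cF)=\Hom(j_{!*}j^*\cF,j_{!*}j^*\cF)=\Hom(j^*\cF,j^*\cF),
\]
the composite of these identifications being ``restrict to $Y'$'' since $j^*j_{!*}=\Id$; hence there is exactly one morphism restricting to $\mathrm{id}_{j^*\cF}$. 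The step I expect to require the most care is the canonical identification $j_{!*}j^*\cF_Z=\cF_Z$ in the second paragraph — matching the strict-support definition of $j_{!*}$ with Proposition~\ref{pr:Saito3.21}(ii) along the dense open $Z\cap Y'\subset Z$; the rest is formal or standard perverse-sheaf manipulation.
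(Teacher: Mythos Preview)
Your proof is correct and follows essentially the same route as the paper: both arguments hinge on the strict-support decomposition~\eqref{eq:support_decomposition} and Proposition~\ref{pr:Saito3.21}(ii) to identify $j_{!*}j^*\cF_Z$ with $\cF_Z$ whenever $Z\cap Y'\ne\emptyset$. Your version is organized around the auxiliary condition~$(\star)$ and spells out a few steps (e.g.\ the IC-sheaf argument for (i)$\iff(\star)$ and the uniqueness via full faithfulness of $j_{!*}$) that the paper leaves implicit, but the underlying idea is the same.
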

\begin{proof}
It is clear from the definition of $j_{!*}$ that for any $\cF'\in\MH(Y',n)$ the Hodge module $j_{!*}\cF'$ has no sub- or quotient-objects supported on $Y-Y'$. Thus (iii) implies (i) and (ii). Let us show that (i) implies (iii). By~\eqref{eq:support_decomposition} we may assume that $\cF$ has strict support $Z$ for a closed subscheme $Z\subset Y$. Condition (i) implies that $Z\cap Y'\ne\emptyset$. Thus by the definition of intermediate extension and Proposition~\ref{pr:Saito3.21} we can write $\cF=j_{!*}\cF'$ for $\cF'\in\MH(Y',n)$. It remains to use Lemma~\ref{lm:j*composition}(i). We also see from Proposition~\ref{pr:Saito3.21} that there is a unique isomorphism $j_{!*}j^*\cF\to\cF$ whose restriction to $Y'$ is the identity. Similarly we show that (ii) implies (iii).
\end{proof}

There is a related statement for mixed Hodge modules.
\begin{lemma}\label{lm:EqualTo!*Mixed} Let $j\colon Y'\to Y$ be an open embedding and let $\cF\in\MHM(Y)$ be such that $j^*\cF$ is pure of weight $n$ and $\Rat(\cF)$ has no subobjects or quotient objects supported on $Y-Y'$. Then $\cF\in\MH(Y,n)$ and there is a unique isomorphism $j_{!*}j^*\cF\to\cF$ whose restriction to $Y'$ is the identity.
\end{lemma}
\begin{proof}
  Considering the weight filtration on $\cF$, we see that if $\cF$ is not pure of weight $n$, then there is either a submodule $\cF'$ whose weights are less than $n$, or a quotient module $\cF'$ whose weights are greater than $n$. Since the weight filtration is Zariski local, we see that $j^*\cF'=0$. Thus $\Rat(\cF')$ is supported on $Y-Y'$, which is a contradiction. We see that $\cF$ is pure of weight $n$ and we can apply Lemma~\ref{lm:EqualTo!*}.
\end{proof}

The next lemma follows from the definitions.
\begin{lemma}\label{lm:IntermExtRat}
$j_{!*}$ is compatible with $\Rat$ and the intermediate extension for perverse sheaves.
\end{lemma}

\subsection{Direct image for ind-proper morphisms}\label{ax:pushw}
We call a morphism $f\colon Y_1\to Y_2$ of ind-schemes \emph{ind-proper\/} if for any closed subscheme $Z\subset Y_2$ the fibered product $Y_1\times_{Y_2}Z$ is the inductive limit of proper $Z$-schemes (the morphisms being closed embeddings).

\begin{remark}
    It is not clear whether this definition is Zariski local over the base. However, the definition suffices for our purposes.
\end{remark}

The direct image of complexes of mixed Hodge modules on schemes is defined in~\cite{SaitoMixed}. If the morphism is proper, a version of decomposition theorem ensures that the direct image of a pure complex remains pure. We extend this to the ind-proper morphisms of ind-schemes in the proposition below.

\begin{proposition}\label{pr:DirectImage}
(i) Let $f\colon Y_1\to Y_2$ be an ind-proper morphism of ind-separated ind-schemes of ind-finite type over $\C$. Then there is a canonical exact functor
\[
    f_*\colon D^b\MH(Y_1,n)\to D^b\MH(Y_2,n)
\]
compatible with $\Rat$ and direct image for perverse sheaves in the sense that for any $\cF\in D^b\MH(Y_1,n)$ we have a canonical isomorphism
\[
    f_*\Rat(\cF)=\Rat(f_*\cF).
\]

(ii) Assume that $f$ is a closed embedding. Then $f_*$ is the closed embedding functor from Proposition~\ref{pr:KashDb}(ii).

(iii) Let $g\colon Y_2\to Y_3$ be another ind-proper morphism. Then we have a canonical isomorphism $(g\circ f)_*=g_*\circ f_*$.
\end{proposition}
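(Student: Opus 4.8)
The plan is to build $f_*$ by reducing to the known theory of Saito's direct image $f_*\colon D^b\MHM(Y_1)\to D^b\MHM(Y_2)$ for a proper morphism of schemes, and then check that under an ind-proper morphism this descends to the subcategories of pure complexes and is independent of all choices. First I would fix presentations $Y_1=\lim_l Y_1^{(l)}$ and $Y_2=\lim_m Y_2^{(m)}$ by closed embeddings of separated schemes of finite type. By the definition of ind-proper, for each $l$ there is an $m=m(l)$ and a proper morphism $f^{(l)}\colon Y_1^{(l)}\to Y_2^{(m)}$ fitting into a commutative square with the transition maps; enlarging $m$ if necessary we may assume $m(l)$ is monotone. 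For the proper scheme morphism $f^{(l)}$, Saito's theory gives an exact functor $f^{(l)}_*\colon D^b\MHM(Y_1^{(l)})\to D^b\MHM(Y_2^{(m(l))})$, and the decomposition theorem for proper morphisms (applied to the pure case) shows that $f^{(l)}_*$ sends a complex whose $i$-th cohomology is pure of weight $n+i$ to a complex with the same property — i.e.\ it restricts to $f^{(l)}_*\colon D^b\MH(Y_1^{(l)},n)\to D^b\MH(Y_2^{(m(l))},n)$. Composing with the closed-embedding functor $D^b\MH(Y_2^{(m(l))},n)\to D^b\MH(Y_2,n)$ of Proposition~\ref{pr:KashDb}(ii) and passing to the limit over $l$ produces the desired $f_*$ on $D^b\MH(Y_1,n)=\lim_l D^b\MH(Y_1^{(l)},n)$.

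The key compatibilities to verify, in order, are: (a) the base-change/commutation isomorphisms that make $f^{(l)}_*$ compatible with the closed embeddings $Y_1^{(l)}\to Y_1^{(l')}$ and $Y_2^{(m(l))}\to Y_2^{(m(l'))}$, so that the $f^{(l)}_*$ assemble into a functor on the ind-categories — this is Saito's base change for a proper morphism along a closed embedding, together with Proposition~\ref{pr:KashDb}(ii); (b) independence of the choice of presentations and of the indices $m(l)$, which follows because any two choices are dominated by a common refinement and the closed-embedding functors are fully faithful with the expected essential images (Proposition~\ref{pr:KashDb}(ii)); (c) compatibility with $\Rat$, which is immediate from Saito's theorem that $\Rat\colon D^b\MHM(Y)\to D^b\Perv_\Q(Y)$ intertwines $f_*$ with the topological direct image (cited already in the proof of Proposition~\ref{pr:DerRat}), passed through the limit. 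Part (ii) is then essentially a tautology: when $f$ is itself a closed embedding one may take $f^{(l)}$ to be a closed embedding of schemes, for which Saito's $f^{(l)}_*$ agrees with the closed-embedding functor of Proposition~\ref{pr:KashDb}(ii) by construction, so the two definitions of $f_*$ coincide. Part (iii) follows from the functoriality $(g^{(l)}\circ f^{(l)})_*=g^{(l)}_*\circ f^{(l)}_*$ for composable proper morphisms of schemes in Saito's theory, again passed to the limit after choosing compatible presentations for $Y_1,Y_2,Y_3$.

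The main obstacle I anticipate is step (a), specifically pinning down that Saito's proper direct image genuinely preserves the pure subcategory with the correct weight bookkeeping and does so compatibly with closed embeddings into larger ambient schemes. Saito's decomposition theorem is usually stated for the direct image of a single pure Hodge module; here I need it for complexes in $D^b\MH(Y_1,n)$, i.e.\ I must know that if $H^i(\cF^\bullet)\in\MH(Y_1^{(l)},n+i)$ then $H^j(f^{(l)}_*\cF^\bullet)\in\MH(Y_2^{(m(l))},n+j)$. This reduces, via the (non-canonical) splitting of Proposition~\ref{pr:KashDb}(i) and exactness of $f^{(l)}_*$, to the statement that $f^{(l)}_*$ of a single pure Hodge module of weight $w$, placed in degree $0$, has cohomology $H^j$ pure of weight $w+j$ — which is exactly the content of Saito's stability-of-weights theorem \cite[Thm.~5.3.1]{SaitoPolarizable} for proper morphisms. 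Once this is in hand, everything else is a matter of organizing limits and invoking the full faithfulness in Proposition~\ref{pr:KashDb}(ii); I would also remark that the functor is canonical (not merely defined up to isomorphism) because each choice of refinement yields a canonically isomorphic functor and these isomorphisms are compatible, so the limit is well-defined.
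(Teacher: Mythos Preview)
Your proposal is correct and follows essentially the same route as the paper: reduce to Saito's proper direct image on $D^b\MHM$ for schemes, check it preserves the pure subcategory $D^b\MH(\,\cdot\,,n)$, and then pass to ind-schemes by factoring through finite stages and composing with the closed-embedding functor of Proposition~\ref{pr:KashDb}(ii). The only noteworthy difference is in the weight-preservation step: you invoke the decomposition theorem and the splitting of Proposition~\ref{pr:KashDb}(i) to reduce to a single pure module, whereas the paper argues more directly that for proper $f$ one has $f_*=f_!$, and since $f_*$ does not decrease weights while $f_!$ does not increase them, $f_*$ preserves weights on all of $D^b\MHM$ and hence on $D^b\MH(\,\cdot\,,n)$---no splitting needed.
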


\begin{proof}
Assume first that all our ind-schemes are, in fact, reduced schemes. By~\cite[Thm.~4.3]{SaitoMixed} we have the functors $f_*,f_!\colon D^b\MHM(Y_1)\to D^b\MHM(Y_2)$. Since $f$ is proper, we have $f_*=f_!$ by~\cite[(4.3.3)]{SaitoMixed}. Since $f_*$ does not decrease weights, and $f_!$ does not increase weights (see~\cite[Sect.~14.1.1]{PetersSteenbrinck}), $f_*$ actually preserves weights. Thus $f_*$ takes $D^b\MH(Y_1,n)$ into $D^b\MH(Y_2,n)$.

The other properties follow from the respective properties of the direct image of complexes of mixed Hodge modules.

If $Y_1$ and $Y_2$ are not necessarily reduced schemes, consider $f_{red}\colon Y_1^{red}\to Y_2^{red}$. We set $f_*:=(f_{red})_*$. The required properties follow immediately.

Finally, let $Y_i=\lim Y_i^{(l)}$ be ind-schemes of finite type. Every object of the category $D^b\MH(Y_1,n)$ can be written as $\iota^{(l)}_{1,*}\cF$, where $\cF$ is an object of $D^b\MH(Y_1^{(l)},n)$ and $\iota_1^{(l)}\colon Y_1^{(l)}\to Y_1$ is the embedding. Note that for large enough $l'$, the composition $f\circ\iota_1^{(l)}$ factors as $\iota_2^{(l')}\circ h$, where $\iota_2^{(l')}\colon Y_2^{(l')}\to Y_2$ is the embedding and $h\colon Y_1^{(l)}\to Y_2^{(l')}$ is proper. We define $f_*(\iota^{(l)}_{1,*}\cF):=\iota_{2,*}^{(l')}h_*\cF$. We define $f_*$ on morphisms similarly. We leave it to the reader to check that the functor is well-defined and to derive (i)--(iii) from the case of schemes.
\end{proof}

Note that $H^i(f_*\cF)=0$ for $|i|>\dim\supp\cF$.

\subsection{Duality} In~\cite{SaitoMixed} Saito defines the duality $D\colon D^b\MHM(Y)^{op}\to D^b\MHM(Y)$. The duality interchanges the direct images $f_*$ and $f_!$ (see~\cite[(4.3.5)]{SaitoMixed}). In particular, the duality commutes with $f_*=f_!$ if $f$ is proper. Applying this in the case, when $f$ is a closed embedding, we see that the duality is compatible with closed embeddings. This allows us to extend the duality to the case of ind-schemes. Note that the duality functor is compatible with $\Rat$ and the Verdier duality for perverse sheaves.

\begin{proposition}\label{pr:duality}
\stepzero\noindstep\label{duality:i} The duality functor takes $D^b\MH(Y,n)^{op}$ to $D^b\MH(Y,-n)$ and $\MH(Y,n)^{op}$ to $\MH(Y,-n)$. Also, the duality is compatible with the duality for variations of Hodge structures and Proposition~\ref{pr:Saito3.21}.\\
\noindstep\label{duality:ii}
The duality commutes with intermediate extensions and direct images for ind-proper morphisms.
\end{proposition}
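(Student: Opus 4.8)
The plan is to reduce everything to statements that are already available for mixed Hodge modules on schemes (proved by Saito) and for variations of Hodge structure, and then to propagate them to ind-schemes via the closed-embedding compatibility already noted just before the statement. For part~\eqref{duality:i}: the duality functor $D$ on $D^b\MHM(Y)$ is known to send pure complexes of weight $n$ to pure complexes of weight $-n$ (this is the defining property of weights together with the fact that $D$ interchanges ``weights $\le n$'' and ``weights $\ge -n$'', see~\cite[\S14]{PetersSteenbrinck} or~\cite{SaitoMixed}); applied to a complex in $D^b\MH(Y,n)$ this immediately gives a complex in $D^b\MH(Y,-n)$, and restricting to complexes concentrated in degree~$0$ gives the statement for $\MH(Y,n)$. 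The compatibility with the duality for variations of Hodge structure is the assertion that, under the equivalence $\VHS_{gen}(Z,n-\dim Z)\xrightarrow{\simeq}\MH_Z(n)$ of Proposition~\ref{pr:Saito3.21}, Saito's $D$ corresponds (up to the appropriate shift and Tate twist) to the classical dual VHS $\cV\mapsto\cV^\vee$; by Proposition~\ref{pr:Saito3.21}(ii) and the fact that $D$ commutes with restriction to open subsets, it suffices to check this over a smooth dense open $U\subset Z$, where it is the well-known local computation that the Verdier dual of a polarizable VHS (placed in perverse degree) is again a polarizable VHS, dual at every point. I would cite this from~\cite{SaitoPolarizable} rather than redo it.

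For part~\eqref{duality:ii}: compatibility of $D$ with direct images for ind-proper morphisms follows from~\cite[(4.3.5)]{SaitoMixed}, which says $D\circ f_* = f_!\circ D$, combined with $f_*=f_!$ for (ind-)proper $f$ as established in the proof of Proposition~\ref{pr:DirectImage}; one then checks this passes through the colimit presentation $Y_i=\lim Y_i^{(l)}$ exactly as in the last paragraph of that proof, using that $D$ is already known to commute with closed embeddings. Compatibility with intermediate extensions is the most delicate point and is the step I expect to be the main obstacle. The cleanest route is to use the characterization of $j_{!*}$ from Lemma~\ref{lm:EqualTo!*}: by the support decomposition~\eqref{eq:support_decomposition} one reduces to $\cF'\in\MH_Z(Y',n)$ with strict support; then $j_{!*}\cF'$ is the unique object of $\MH_{\overline Z}(Y,n)$ whose restriction to $Y'$ is $\cF'$ and which has no sub- or quotient-objects supported on $\overline Z\setminus Z$ (or on the complement of $Y'$). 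Now $D(j_{!*}\cF')$ lies in $\MH_{\overline Z}(Y,-n)$, its restriction to $Y'$ is $D(\cF')=j^*D(j_{!*}\cF')$ (using the already-proven compatibility of $D$ with the open restriction $j^*$, which itself follows since $j^*=j^!$ for an open embedding), and since $D$ is an anti-equivalence it interchanges subobjects and quotient objects — so $D(j_{!*}\cF')$ has no sub- or quotient-objects supported off $Y'$ either. By the uniqueness in Lemma~\ref{lm:EqualTo!*} applied on the target, $D(j_{!*}\cF')\simeq j_{!*}(D\cF')$ canonically, with the isomorphism pinned down by its restriction to $Y'$.

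The one subtlety I would be careful about is that Lemma~\ref{lm:EqualTo!*} as stated is for an \emph{open} embedding $j$, whereas intermediate extension is defined for a general locally closed $j$; I would handle this by factoring $j$ as an open embedding followed by a closed embedding (as in Lemma~\ref{lm:j*composition}), observe that $D$ commutes with the closed-embedding part since $j_{!*}=j_*=j_!$ there, and reduce to the open case. A second point worth a sentence: the equivalences in Proposition~\ref{pr:Saito3.21} and in the definition of $j_{!*}$ involve identifying $\MH_Z(Y,n)$ with $\MH_Z(n)$ via Kashiwara's equivalence, so I should note that $D$ commutes with Kashiwara's equivalence — but this is again part of~\cite{SaitoMixed}, since Kashiwara's equivalence is realized by a closed embedding, with which $D$ is compatible. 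Everything else is formal bookkeeping over the colimit, which I would leave to the reader in the same spirit as the analogous passages in Propositions~\ref{pr:DirectImage} and~\ref{pr:KashDb}.
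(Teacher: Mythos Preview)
Your proposal is correct and follows essentially the same route as the paper: for part~\eqref{duality:i} you both invoke the weight-reversal property of $D$ (the paper cites~\cite[Prop.~14.30(c)]{PetersSteenbrinck}) together with the support decomposition, and for the intermediate-extension statement you both reduce to an open embedding and then apply Lemma~\ref{lm:EqualTo!*} after observing that $D$ swaps subobjects and quotient objects and commutes with $j^*$. The only cosmetic difference is that the paper reduces to the open case via Kashiwara's equivalence whereas you factor $j$ as open-followed-by-closed, but these are the same reduction.
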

\begin{proof}
It is enough to consider the case, when $Y$ is a reduced scheme. It follows, from~\cite[Prop~14.30(c)]{PetersSteenbrinck} and~\eqref{eq:support_decomposition} that the duality takes $\MH(Y,n)^{op}$ to $\MH(Y,-n)$. Thus it takes $D^b\MH(Y,n)^{op}$ to $D^b\MH(Y,-n)$.


Let us show that the duality commutes with the intermediate extensions. Using Kashiwara's equivalence, we reduce to the case of an open embedding $j\colon U\to Y$. Then for $\cF\in\MH(Y,n)$ the intermediate extension $j_{!*}\cF$ has no quotient objects supported on $Y-U$ so $Dj_{!*}\cF$ has no subobjects supported on $Y-U$. Next, the duality is compatible with restrictions to open subsets, 
so we have $j^*Dj_{!*}\cF=D\cF$ (we used Lemma~\ref{lm:j*composition}(i)) and the statement follows from Lemma~\ref{lm:EqualTo!*}.

The compatibility with the duality for VHS follows from the definitions. The compatibility with direct images has been already explained.
\end{proof}

\subsection{Exterior product}\label{ax:boxtimes} In~\cite{SaitoMixed}, Saito defines the exterior product of mixed Hodge modules
\[
    \boxtimes\colon\MHM(Y_1)\times\MHM(Y_2)\to\MHM(Y_1\times Y_2)
\]
satisfying the usual compatibilities. In particular, the exterior product is compatible with Kashiwara's equivalence, so we can extend it to the case of ind-schemes.

\begin{proposition}\label{pr:boxtimes}
(i) The exterior product is compatible with $Rat$. \\
(ii)The exterior product is compatible with the exterior product of VHS and Proposition~\ref{pr:Saito3.21}.\\
(iii) Let $j_1\colon Y_1'\to Y_1$ and $j_2\colon Y_2'\to Y_2$ be locally closed embeddings and assume that $\cF_1$ and $\cF_2$ are pure polarizable Hodge modules on $Y'_1$ and $Y'_2$ respectively. Then
\[
    (j_1\boxtimes j_2)_{!*}(\cF_1\boxtimes\cF_2)=(j_1)_{!*}\cF_1\boxtimes(j_2)_{!*}(\cF_2).
\]
(iv) The exterior product restricts to a functor
\[
    \boxtimes\colon\MH(Y_1,n_1)\times\MH(Y_2,n_2)\to\MH(Y_1\times Y_2,n_1+n_2).
\]
\end{proposition}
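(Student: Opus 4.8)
\textbf{Proof proposal.} The plan is to reduce everything, via Kashiwara's equivalence, to the case in which $Y_1$ and $Y_2$ are smooth separated schemes of finite type over $\C$, and then to extract the four assertions from Saito's construction of $\boxtimes$ together with the support decomposition~\eqref{eq:support_decomposition} and Proposition~\ref{pr:Saito3.21}. Since $Y_1\times Y_2=\lim(Y_1^{(l)}\times Y_2^{(l)})$ and the exterior product commutes with direct images along closed embeddings (one of Saito's standard compatibilities), all four statements pass at once from schemes to ind-schemes; embedding $Y_i$ into a smooth $V_i$ reduces the scheme case to the smooth case in the same fashion. Assuming now $Y_1,Y_2$ smooth, part~(i) is exactly the compatibility of Saito's $\boxtimes$ with the underlying perverse sheaves (equivalently with $\DR$), which is built into the definition in~\cite{SaitoMixed}. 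For part~(iv) I would invoke the fact, also part of Saito's formalism (cf.~\cite[Ch.~14]{PetersSteenbrinck}), that $\boxtimes\colon\MHM(Y_1)\times\MHM(Y_2)\to\MHM(Y_1\times Y_2)$ is bi-exact, multiplies weight filtrations, and carries polarizations to polarizations; hence the exterior product of a pure polarizable Hodge module of weight $n_1$ and one of weight $n_2$ is again pure polarizable of weight $n_1+n_2$.

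For part~(ii), using bi-additivity of $\boxtimes$ and~\eqref{eq:support_decomposition} I reduce to $\cF_i\in\MH_{Z_i}(Y_i,n_i)$ with $Z_i\subset Y_i$ integral. By~(i), $\Rat(\cF_1\boxtimes\cF_2)=\Rat(\cF_1)\boxtimes\Rat(\cF_2)$ is an IC-sheaf on the integral scheme $Z_1\times Z_2$, so $\cF_1\boxtimes\cF_2$ has strict support $Z_1\times Z_2$. Choosing dense smooth opens $U_i\subset Z_i$, compatibility of $\boxtimes$ with restriction to opens shows that on $U_1\times U_2$ the Hodge module $\cF_1\boxtimes\cF_2$ restricts to the exterior product of the variations $\cF_i|_{U_i}$; and on smooth schemes Saito's exterior product of VHS-type filtered $\EuD$-modules is the exterior product of variations with the product filtration. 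Proposition~\ref{pr:Saito3.21}(ii) then identifies $\cF_1\boxtimes\cF_2$ with the exterior product of variations, the weight bookkeeping being consistent since $\dim(Z_1\times Z_2)=\dim Z_1+\dim Z_2$.

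For part~(iii), I write each locally closed embedding as $j_i=\bar\jmath_i\circ j_i^\circ$ with $j_i^\circ$ an open embedding into the closure and $\bar\jmath_i$ a closed embedding; using Lemma~\ref{lm:j*composition}(ii),(iii) and the compatibility of $\boxtimes$ with closed direct images, this reduces the claim to the case where $j_1,j_2$ are open embeddings, so that $j_1\boxtimes j_2$ is open as well. Set $\cG:=(j_1)_{!*}\cF_1\boxtimes(j_2)_{!*}\cF_2$, which lies in $\MH(Y_1\times Y_2,n_1+n_2)$ by~(iv). Applying $\Rat$ and combining~(i), Lemma~\ref{lm:IntermExtRat}, and the classical fact that the intermediate extension of an exterior product of perverse sheaves is the exterior product of the intermediate extensions, I get $\Rat(\cG)=(j_1\boxtimes j_2)_{!*}\Rat(\cF_1\boxtimes\cF_2)$, which therefore has no sub- or quotient objects supported off $Y_1'\times Y_2'$. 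Lemma~\ref{lm:EqualTo!*} then yields a canonical isomorphism $\cG=(j_1\boxtimes j_2)_{!*}(j_1\boxtimes j_2)^*\cG$, and $(j_1\boxtimes j_2)^*\cG=\cF_1\boxtimes\cF_2$ by Lemma~\ref{lm:j*composition}(i) and compatibility of $\boxtimes$ with open restriction, which is the assertion.

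The main obstacle is not any single deep step but rather pinning down precisely which compatibilities of Saito's exterior product on mixed Hodge modules are available off the shelf — bi-exactness, the weight-filtration product formula, preservation of polarizations, and commutation with open restrictions and closed direct images — since the arguments for parts~(ii)--(iv) rest entirely on these, and the identification in~(ii) also requires knowing that Saito's $\boxtimes$ restricts to the naive exterior product on the VHS locus. Beyond Saito's formalism, the only genuinely geometric input is the statement that $j_{!*}$ commutes with $\boxtimes$ for perverse sheaves, which is standard.
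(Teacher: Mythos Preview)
Your proof is correct, and the core mechanism for part~(iii) --- pass to $\Rat$, use that $\boxtimes$ commutes with $j_{!*}$ for perverse sheaves, and then invoke the characterization of intermediate extensions --- is exactly what the paper does. The paper's write-up is terser (it simply says that (i) and (ii) ``follow from the definitions'' and for (iii) shows that $\Rat((j_1)_{!*}\cF_1\boxtimes(j_2)_{!*}\cF_2)$ has no subsheaves on the complement, then applies Lemma~\ref{lm:EqualTo!*Mixed}), but the substance is the same.

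The one genuine structural difference is the logical order in which you treat (iii) and (iv). You take (iv) as a black box from Saito's formalism (bi-exactness, multiplicativity of weights, preservation of polarizations) and then use it in (iii) so that $\cG=(j_1)_{!*}\cF_1\boxtimes(j_2)_{!*}\cF_2$ is already known to be pure and Lemma~\ref{lm:EqualTo!*} (the pure version) applies. The paper instead proves (iii) first, using Lemma~\ref{lm:EqualTo!*Mixed} --- which only needs purity of the \emph{restriction} $\cF_1\boxtimes\cF_2$ over $Y_1'\times Y_2'$ and delivers purity of $\cG$ as part of its conclusion --- and then deduces (iv) from (ii), (iii), and Proposition~\ref{pr:Saito3.21} via the support decomposition~\eqref{eq:support_decomposition}. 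Your route is shorter if one is willing to quote the weight/polarization compatibilities of $\boxtimes$; the paper's route is more self-contained, since it recovers the purity statement (iv) from the VHS equivalence rather than citing it. Either way the content is equivalent, and your explicit reduction of (iii) to the case of open embeddings is a step the paper leaves implicit.
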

\begin{proof} The first two statements follow from the definitions. For (iii), using the properties of perverse sheaves we show that
$\Rat((j_1)_{!*}\cF_1\boxtimes(j_2)_{!*}\cF_2)$ has no subsheaves supported on
\[
    Y_1\times Y_2-Y'_1\times Y'_2=((Y_1-Y'_1)\times Y_2)\cup(Y_1\times(Y_2-Y'_2)).
\]
Now (iii) follows from Lemma~\ref{lm:EqualTo!*Mixed}.

It remains to prove (iv). It follows from (ii), (iii), and Proposition~\ref{pr:Saito3.21} that $\MH_{Z_1}(n_1)\boxtimes\MH_{Z_2}(n_2)\subset\MH_{Z_1\times Z_2}(n_1+n_2)$, whenever $Z_1$ and $Z_2$ are integral schemes. Finally, (iv) follows from~\eqref{eq:support_decomposition}.
\end{proof}

\subsection{Tate twists}\label{sect:TateTwists}
For the mixed Hodge modules on reduced separated schemes one has Tate twists $\cF\to\cF(k)$, where $k\in\Z$. They are auto-equivalences of $\MHM(X)$ sending $\MH(X,n)$ to $\MH(X,n-2k)$ (see, e.g.,~\cite[XIV--17, Prop.~14.30(b)]{PetersSteenbrinck}). The Tate twists are compatible with Kashiwara's equivalence, so they can be extended to ind-schemes. Note that the Tate twists are also compatible with $\Rat$.

\subsection{Inverse images}\label{sect:InvImage}
In~\cite[Sect.~4.4]{SaitoMixed} Saito defines the inverse images of mixed Hodge modules. That is, if $f\colon Y_1\to Y_2$ is a morphism of reduced separated schemes of finite type, then Saito defines functors $f^!,f^*\colon D^b\MHM(Y_2)\to D^b\MHM(Y_1)$ as right (resp.~left) adjoint to $f_!$ (resp.~$f_*$).

These functors satisfy the usual properties, in particular, the base change~\cite[(4.4.3)]{SaitoMixed} and compatibility with $\boxtimes$. This allows us to generalize these functors to schematic morphisms of ind-schemes. The functors $f^!$ and $f^*$ are also compatible with $\Rat$ and the corresponding functors for perverse sheaves.

We will need a description of these inverse images in the case, when $f$ is a smooth morphism of pure relative dimension. It will be convenient for us to use the shifted inverse image $f^\dagger:=f^![-d](-d)$, where $d$ is the relative dimension of $f$ (this is also equal to $f^*[d]$ as follows from the definition in~\cite[Sect.~2.17]{SaitoMixed}). Note that a similar functor for perverse sheaves takes perverse sheaves to perverse sheaves, so, by faithfulness of $\Rat$, $f^\dagger$ takes mixed Hodge modules to mixed Hodge modules.

\begin{proposition}\label{pr:InvImage}
Let $f\colon Y_1\to Y_2$ be a smooth schematic morphism of relative dimension $d$. Then\\
\stepzero
\noindstep\label{InvImage:iv} If $f$ is an open embedding, then $f^\dagger$ is the restriction functor.\\
\noindstep\label{InVImage:VHS} If $Y_2$ is a smooth scheme, then $f^\dagger$ is compatible with pullback of filtered $\EuD$-modules with rational structure and with Proposition~\ref{pr:Saito3.21}.\\
\noindstep\label{InvImage:0} If $Y_1$ and $Y_2$ are integral schemes, then $f^\dagger$ takes $\MH_{Y_2}(n)$ to $\MH_{Y_1}(n+d)$.\\
\noindstep\label{InvImage:i} $f^\dagger$ takes $\MH(Y_2,n)$ to $\MH(Y_1,n+d)$ and $D^b\MH(Y_2,n)$ to $D^b\MH(Y_1,n+d)$.
\end{proposition}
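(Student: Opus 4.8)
The plan is to reduce everything to the statements already proven for VHS and for Hodge modules with strict support, exploiting the support decomposition~\eqref{eq:support_decomposition}. First I would settle parts~\eqref{InvImage:iv} and~\eqref{InVImage:VHS}. For~\eqref{InvImage:iv}, when $f$ is an open embedding we have $d=0$, and $f^\dagger=f^*[0]=f^!$; the claim that this agrees with the naive restriction functor of~\eqref{eq:resopen} is one of the standard compatibilities of Saito's formalism quoted in Section~\ref{sect:InvImage} (inverse image for an open embedding is restriction), so there is essentially nothing to prove beyond unwinding definitions. For~\eqref{InVImage:VHS}, when $Y_2$ is smooth the module $\DR^{-1}(\cM\otimes_\Q\C)$ is an honest filtered $\EuD_{Y_2}$-module, and the underlying filtered $\EuD$-module of $f^\dagger\cF$ is the usual pullback $f^*(M,F_\bullet)$ with the de Rham shift built into the definition of $f^\dagger$; this is exactly how Saito normalizes $f^*$ in~\cite[Sect.~2.17, 4.4]{SaitoMixed}, so compatibility with Proposition~\ref{pr:Saito3.21}(i) is immediate, and compatibility with the equivalence in general follows because $f^\dagger$ commutes with restriction to a smooth open dense subscheme (base change / compatibility with open restriction already recorded).

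Next I would prove~\eqref{InvImage:0}. Let $Y_1,Y_2$ be integral with $f$ smooth of relative dimension $d$, so $Y_1$ is integral as well. Take $\cF\in\MH_{Y_2}(n)$. By faithfulness of $\Rat$ and the fact that $f^\dagger$ on perverse sheaves is $f^![-d](-d)=f^*[d]$, which sends the IC-sheaf $\IC_{Y_2}$ to $\IC_{Y_1}$ (pullback of an IC sheaf along a smooth morphism is IC, up to the shift), we know $\Rat(f^\dagger\cF)$ is (a VHS-twist of) $\IC_{Y_1}$; in particular $f^\dagger\cF$ has strict support $Y_1$. It remains to identify the weight. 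Pass to a smooth dense open $U_2\subset Y_2$ over which $\cF$ restricts to a VHS $V$ of weight $n-\dim Y_2$; let $U_1=f^{-1}(U_2)$, which is dense open in $Y_1$ and smooth of relative dimension $d$ over $U_2$. By~\eqref{InVImage:VHS} and~\eqref{InvImage:iv}, $f^\dagger\cF$ restricted to $U_1$ is the VHS $f^*V$, which has the same weight $n-\dim Y_2$ as $V$ (pullback of a VHS along a smooth map does not change the weight). Since $\dim Y_1=\dim Y_2+d$, Proposition~\ref{pr:Saito3.21} identifies the strict-support extension of a weight $(n-\dim Y_2)$ VHS on $U_1$ with an object of $\MH_{Y_1}\bigl((n-\dim Y_2)+\dim Y_1\bigr)=\MH_{Y_1}(n+d)$, which is what we want. (One uses that $f^\dagger$ commutes with intermediate extension across the open embedding $U_1\hookrightarrow Y_1$; this follows from the characterization in Lemma~\ref{lm:EqualTo!*Mixed} applied to the mixed Hodge module $f^\dagger\cF$, whose Rat has no sub/quotients off $U_1$.)

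Finally, part~\eqref{InvImage:i}: for $\cF\in\MH(Y_2,n)$ decompose $\cF=\bigoplus_Z\cF_Z$ with $\cF_Z\in\MH_Z(Y_2,n)$ via~\eqref{eq:support_decomposition}. For each integral closed $Z\subset Y_2$, the preimage $f^{-1}(Z)$ is a union of its irreducible components $Z'$, each mapping smoothly of relative dimension $d$ onto $Z$; applying~\eqref{InvImage:0} to $Z'\to Z$ (after Kashiwara's equivalence to view $\cF_Z$ as a module with strict support $Z$) gives $f^\dagger\cF_Z\in\bigoplus_{Z'}\MH_{Z'}(Y_1,n+d)\subset\MH(Y_1,n+d)$. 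Summing over $Z$ gives $f^\dagger\cF\in\MH(Y_1,n+d)$. The derived statement then follows by applying this degree by degree to the cohomology objects $H^i(\cF^\bullet)$, using exactness of $f^\dagger$ on the hearts (which holds since $f^\dagger$ on perverse sheaves is exact for $f$ smooth) together with Proposition~\ref{pr:KashDb}(i). I expect the main obstacle to be the bookkeeping in~\eqref{InvImage:0}: one must be careful that $f^{-1}(Y_2)$ need not be irreducible even when $Y_1$ is (indeed $Y_1$ is assumed integral here, so this is fine, but in the reduction step for~\eqref{InvImage:i} it is not), and that the two shifts — the cohomological shift by $d$ and the weight shift by $d$ coming from $\dim Y_1-\dim Y_2=d$ — must be tracked consistently through Proposition~\ref{pr:Saito3.21}, whose normalization carries a $\dim Z$ in it. Everything else is a formal consequence of the compatibilities of Saito's six operations recorded earlier in this section.
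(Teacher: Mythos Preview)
Your proposal is correct and follows essentially the same route as the paper: parts~\eqref{InvImage:iv} and~\eqref{InVImage:VHS} are read off from Saito's definitions, part~\eqref{InvImage:0} is proved by restricting to a smooth dense open $U_2\subset Y_2$, identifying $f^\dagger\cF|_{U_1}$ as a VHS of the right weight, and invoking Lemma~\ref{lm:EqualTo!*Mixed} to conclude $f^\dagger\cF=j_{!*}(f^\dagger\cF|_{U_1})$, and part~\eqref{InvImage:i} is deduced from~\eqref{InvImage:0} via~\eqref{eq:support_decomposition} and Proposition~\ref{pr:KashDb}. The only cosmetic difference is that the paper first isolates the case $Y_2$ smooth in~\eqref{InvImage:0} by citing Saito's non-characteristic lemma~\cite[Lemma~2.25]{SaitoMixed} directly, whereas you absorb that step into the appeal to~\eqref{InVImage:VHS}; and your treatment of~\eqref{InvImage:i} spells out the decomposition of $f^{-1}(Z)$ into components, which the paper leaves implicit.
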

\begin{proof}
    Parts~\eqref{InvImage:iv} and~\eqref{InVImage:VHS} follow from the definition~\cite[(2.17.6)]{SaitoMixed} (see also~\cite[Sect.~3.5]{SaitoPolarizable}).

    Let us prove~\eqref{InvImage:0}. If $Y_2$ is smooth, the statement follows from~\cite[Lemma~2.25]{SaitoMixed} because being smooth $f$ is non-characteristic by~\cite[Lemme~3.5.2]{SaitoPolarizable}. In the general case, let $\cF\in\MH_{Y_2}(n)$, let $U_2$ be a non-empty smooth open subset of $Y_2$, and let $\cF':=\cF|_{U_2}$. Let $g$ be the restriction of $f$ to $U_1:=f^{-1}(U_2)$. By the smooth case, $g^\dagger\cF'\in\MH(U_1,n+d)$. Let $j\colon U_1\to Y_1$ be the open embedding. It is enough to show that $f^\dagger\cF=j_{!*}g^\dagger\cF'$, which follows from Lemma~\ref{lm:EqualTo!*Mixed} and the fact that smooth inverse images of perverse sheaves commute with intermediate extensions. Finally,~\eqref{InvImage:i} follows from part~\eqref{InvImage:0}, \eqref{eq:support_decomposition}, and Proposition~\ref{pr:KashDb}.
\end{proof}

We need a version of base change for intermediate extensions.
\begin{lemma}\label{lm:basechange}
Consider a Cartesian diagram of ind-schemes
\[
\begin{CD}
Y_1 @>j'>> Y_2\\
@Vf'VV @VfVV\\
Z_1 @>j>> Z_2
\end{CD}
\]
where $j$ is a locally closed embedding and $f$ is smooth and schematic. Then for all $\cF\in\MH(Z_1,n)$ we have functorially in $\cF$
\[
    f^\dagger j_{!*}\cF=j'_{!*}(f')^\dagger\cF.
\]
\end{lemma}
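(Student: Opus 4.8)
The plan is to reduce everything to the case of an open embedding $j$ via the factorization properties of intermediate extensions, and then to use the support-decomposition~\eqref{eq:support_decomposition} to reduce further to a Hodge module $\cF$ with strict support. First I would observe that since $j$ is a locally closed embedding, it factors as an open embedding into a closed subscheme (or, after applying Lemma~\ref{lm:j*composition}(iii), as a composition of an open embedding and a closed embedding); by Lemma~\ref{lm:j*composition}(ii)--(iii) the intermediate extension is functorial in such compositions, and base change along a closed embedding is a formal consequence of Kashiwara's equivalence together with Proposition~\ref{pr:InvImage}. So I would treat the case where $j$ is an \emph{open} embedding. By~\eqref{eq:support_decomposition} applied on $Z_1$, I may assume $\cF\in\MH_Z(Z_1,n)$ has strict support an integral closed subscheme $Z\subset Z_1$.

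Next I would work generically. By Proposition~\ref{pr:Saito3.21}, $\cF$ is determined by a polarizable VHS on a smooth dense open $U\subset Z$, and by Proposition~\ref{pr:InvImage}\eqref{InVImage:VHS} the functor $f^\dagger$ is compatible with pullback of VHS via Proposition~\ref{pr:Saito3.21}. Let $U'=f'^{-1}(U)\subset Y_1$, which maps smoothly onto $U$ and is dense in its closure $\overline{Z}'\subset Y_2$ (the strict-support closed subscheme of $(f')^\dagger\cF$ on $Y_2$, by Proposition~\ref{pr:InvImage}\eqref{InvImage:0}). On the smooth locus both sides of the asserted identity agree, essentially by construction: $f^\dagger j_{!*}\cF$ and $j'_{!*}(f')^\dagger\cF$ restrict on $U'$ to the pullback of the generic VHS of $\cF$, up to the Tate twist and shift bookkeeping built into $f^\dagger$. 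This is where Lemma~\ref{lm:basechange} for VHS (mere pullback of local systems with their Hodge filtrations along a smooth morphism commuting with restriction to opens, Proposition~\ref{pr:InvImage}\eqref{InvImage:iv}) does the actual work.

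It then remains to upgrade this generic agreement to an isomorphism of Hodge modules on all of $Y_2$. Here I would invoke Lemma~\ref{lm:EqualTo!*Mixed} (or Lemma~\ref{lm:EqualTo!*}): $j'_{!*}(f')^\dagger\cF$ has, by construction, no sub- or quotient objects supported off $U'$; and $f^\dagger j_{!*}\cF$ has the same property because smooth inverse images of perverse sheaves commute with intermediate extensions — this is exactly the argument already used in the proof of Proposition~\ref{pr:InvImage}\eqref{InvImage:0}. Applying $\Rat$ and using its faithfulness (Proposition~\ref{pr:HM_basic_properties}\eqref{basic:ii}), the standard base change for intermediate extensions of perverse sheaves under smooth pullback gives $\Rat(f^\dagger j_{!*}\cF)=\Rat(j'_{!*}(f')^\dagger\cF)$ compatibly with the identification over $U'$; since both Hodge modules are the intermediate extension of their common restriction to $U'$, the canonical isomorphism over $U'$ extends uniquely (by Lemma~\ref{lm:EqualTo!*}) to the desired isomorphism on $Y_2$, and uniqueness of this extension gives the functoriality in $\cF$.

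The main obstacle I expect is the bookkeeping at the generic point: matching the weight/Tate-twist normalization in $f^\dagger=f^![-d](-d)$ with the dimension shifts $n-\dim Z$ versus $n+d-\dim\overline{Z}'$ appearing in Proposition~\ref{pr:Saito3.21}, and checking that the isomorphism produced on $U'$ is genuinely the restriction of a \emph{morphism} of Hodge modules on $Y_2$ rather than just an abstract identification — i.e.\ that the comparison over $U'$ is natural enough to apply the uniqueness clause of Lemma~\ref{lm:EqualTo!*}. Once the generic comparison is set up carefully, the globalization is formal.
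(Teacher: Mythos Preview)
Your proposal is correct and matches the paper's approach: the paper simply says the proof is ``similar to that of Proposition~\ref{pr:InvImage}\eqref{InvImage:0} and is left to the reader,'' and you have unpacked precisely that argument --- reduce to an open embedding and to strict support, compare the two sides over a dense open where they agree by the VHS-compatibility of $f^\dagger$, and globalize via Lemma~\ref{lm:EqualTo!*} using that smooth pullback of perverse sheaves commutes with intermediate extension. Your worry about the Tate/weight bookkeeping is well-placed but resolves cleanly: both sides are pure of weight $n+d$ by Proposition~\ref{pr:InvImage}\eqref{InvImage:i}, so Lemma~\ref{lm:EqualTo!*} (rather than Lemma~\ref{lm:EqualTo!*Mixed}) already applies, and the uniqueness clause there gives the functoriality.
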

\begin{proof}
The proof is similar to that of Proposition~\ref{pr:InvImage}\eqref{InvImage:0} and is left to the reader.
\end{proof}

\begin{proposition}\label{pr:SmoothDescent}
Hodge modules satisfy descent for smooth schematic surjective morphisms.
\end{proposition}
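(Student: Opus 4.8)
\emph{Proof plan.} The statement means that for a smooth, schematic, surjective morphism $f\colon Y_1\to Y_2$ of relative dimension $d$ the natural functor from $\MH(Y_2,n)$ to the category of descent data is an equivalence; here a descent datum is a pair $(\cF,\phi)$ with $\cF\in\MH(Y_1,n)$ and $\phi\colon p_1^\dagger\cF\xrightarrow{\simeq}p_2^\dagger\cF$ in $\MH(Y_1\times_{Y_2}Y_1,n+d)$ satisfying the cocycle condition on $Y_1\times_{Y_2}Y_1\times_{Y_2}Y_1$ (the projections involved are smooth of relative dimension $d$, so the $\dagger$-pullbacks make sense by Proposition~\ref{pr:InvImage}). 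Since $\cF$ lives on a finite-type piece of $Y_1$, and since $\dagger$-pullback along smooth morphisms, formation of fibre products, and intermediate extension are all compatible with the closed embeddings defining the ind-structures (Propositions~\ref{pr:KashDb} and~\ref{pr:InvImage}, Lemma~\ref{lm:basechange}), the entire datum lives over finite-type pieces and the assertion reduces to the case in which $Y_1$ and $Y_2$ are reduced separated schemes of finite type and $f$ is smooth surjective.

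Next I would reduce to variations of Hodge structure. Writing $\cF=\bigoplus_W\cF_W$ as in~\eqref{eq:support_decomposition}, indexed by integral closed $W\subseteq Y_1$, and using that $f$ is flat and surjective (so every irreducible component of $f^{-1}(Z)$ dominates $Z$), one sees that there are no nonzero morphisms between Hodge modules on the fibre products whose strict supports dominate distinct integral closed subschemes of $Y_2$; hence $\phi$ respects the grouping of the $\cF_W$ according to the subscheme of $Y_2$ they dominate, and $(\cF,\phi)$ decomposes into descent data each supported over a single integral $Z\subseteq Y_2$. Restricting such a datum to $f^{-1}(U)\to U$ for $U\subseteq Z$ a dense smooth open, Proposition~\ref{pr:Saito3.21} together with Proposition~\ref{pr:InvImage}\eqref{InVImage:VHS} identifies it with a descent datum for polarizable variations of Hodge structure along the smooth surjection $f^{-1}(U)\to U$. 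Conversely, given a descended VHS on (a dense open of) $U$, the corresponding Hodge module on $Y_2$ pulls back under $f^\dagger$ to the intermediate extension of the VHS datum by Lemma~\ref{lm:basechange}, which equals $\cF$ by Lemma~\ref{lm:EqualTo!*Mixed}; and since an isomorphism between Hodge modules with strict support is determined by its restriction to a dense open (Lemma~\ref{lm:EqualTo!*} and Proposition~\ref{pr:Saito3.21}), the descended isomorphism automatically satisfies the cocycle condition on the full triple fibre product. Thus full faithfulness and essential surjectivity both reduce to the analogous statement for polarizable VHS.

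It then remains to prove descent for polarizable $\Q$-VHS with quasi-unipotent local monodromy along a smooth surjection $g\colon W_1\to W_2$ of smooth schemes. Such a VHS is a triple $(\cS,\nabla,F_\bullet)$ with $\cS$ a $\Q$-local system, $\nabla$ an integrable connection on $\cS\otimes_\Q\cO_{W_1}$, and $F_\bullet$ a filtration by $\cO$-subbundles. Since a holomorphic submersion of complex manifolds admits local holomorphic sections, the analytification of $g$ is a descent morphism for locally constant sheaves, so $\cS$ descends; the filtered $\cO$-module with connection descends by faithfully flat descent (as $g$ is fppf), compatibly with $\cS$. The remaining conditions are checked after the faithfully flat base change $g$: Griffiths transversality follows from $g^*\Omega^1_{W_2}\hookrightarrow\Omega^1_{W_1}$, and both the fibrewise Hodge-structure condition (of the appropriate weight) and quasi-unipotence of the local monodromies hold because every closed point of $W_2$ lifts to $W_1$ and $g^\dagger$ identifies the corresponding fibres and local monodromies. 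For polarizability one must upgrade polarizability of $g^\dagger(-)$ to polarizability of the descended VHS: by semisimplicity of polarizable VHS and the descent of morphisms just established, the datum decomposes over isotypic blocks on each of which a $\phi$-compatible polarization can be chosen, the space of polarizations being a nonempty cone preserved by the symmetry imposed by $\phi$; equivalently, a polarization on $W_2$ is the same as a $\phi$-compatible polarization on $W_1$.

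The step I expect to be the main obstacle is precisely this last one — making the descent of a polarization rigorous — together with the care needed to descend the rational local system along a morphism that is algebraic but for which one must pass to the classical topology. An alternative route, bypassing the reduction to VHS, would be to verify directly that Saito's defining conditions for $\MH(Y,n)$ (behaviour of the $V$-filtration, strictness, and the inductive structure under nearby and vanishing cycles) are local for the smooth topology, and then invoke the descent, used above anyway, of good filtrations on $\EuD$-modules and of perverse sheaves; but checking that smooth-locality essentially unwinds to the same analysis.
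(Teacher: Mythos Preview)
Your approach is sound in outline but takes a genuinely different route from the paper. You reduce, via the strict-support decomposition~\eqref{eq:support_decomposition} and Proposition~\ref{pr:Saito3.21}, to descent for polarizable VHS along a smooth surjection of smooth schemes, and then argue this by hand (descent of local systems via local holomorphic sections, fppf descent for the filtered bundle with connection, pointwise checks for the remaining conditions). The paper instead treats morphisms and objects separately: for morphisms it reduces to smooth $Y_2$ and invokes descent for filtered $\EuD$-modules with rational structure; for objects it replaces $Y_1$ by an affine cover, compactifies $f$ to a projective morphism with divisorial boundary, extends $\cF$ by $j_{!*}$, observes that $\Rat(\cF)$ descends (smooth descent for perverse sheaves plus faithfulness of $\Rat$), and then applies~\cite[Lemma~2.27]{SaitoMixed} as a black box to descend the Hodge module, with purity of the descended module coming from~\cite[Lemma~2.25]{SaitoMixed}.

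The practical difference is exactly where you anticipated trouble. Your route forces you to confront descent of polarizability for VHS, and the argument you sketch (``the space of polarizations being a nonempty cone preserved by the symmetry imposed by $\phi$'') is not yet a proof: a $\phi$-compatible polarization need not exist just because some polarization does, and the averaging idea requires more care since the symmetry group acting is not finite. The paper sidesteps this entirely: Saito's Lemma~2.27 already incorporates the polarizability, and the compactification step is what puts you in the setting where that lemma applies. Your reduction to VHS is cleaner conceptually and avoids the compactification and resolution of singularities, but it trades those for a genuine issue that Saito's machinery handles for free. If you want to complete your argument, the cleanest fix is probably to note that once the VHS $\cV$ is descended to $W_2$, polarizability of $g^\dagger\cV$ implies polarizability of $\cV$ by Deligne's theorem of the fixed part or by the characterization of polarizable VHS via semisimplicity and existence of a polarization on each simple summand (the endomorphism algebras agree by descent of morphisms, so a simple summand of $\cV$ pulls back to an isotypic polarizable VHS, and a polarization on one copy restricts from one on the isotypic block).
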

\begin{proof}
Let $f\colon Y_1\to Y_2$ be a smooth surjective schematic morphism.

We first prove the descent for morphisms. Let $\cF,\cF'\in\MH(Y_2,n)$ and let $\phi\colon f^\dagger\cF\to f^\dagger\cF'$ be a morphism. We need to show that if the two pullbacks of $\phi$ to $Y_1\times_{Y_2}Y_1$ coincide, then there is a unique morphism $\psi\colon\cF\to\cF'$ such that $f^\dagger\psi=\phi$.

First of all, we may assume that $Y_2$ is a quasi-projective reduced scheme. Note that the natural map $\Hom(\cF,\cF')\to\Hom(f^\dagger\cF,f^\dagger\cF')$ is injective (compose it with $\Rat$). It follows that the statement is Zariski local over $Y_2$. Thus, we can assume that $Y_2$ is embeddable into a smooth scheme, and, in turn, that $Y_2$ is a smooth scheme. It remains to note that the category of Hodge modules on a smooth scheme is a full subcategory of the category of filtered $\EuD$-modules with rational structures and we have smooth descent for the latter category.

Now assume that $\cF\in\MH(Y_1,n)$ and suppose we are given an isomorphism between the two pullbacks of $\cF$ to $Y_1\times_{Y_2}Y_1$  satisfying the cocycle condition. We need to show that $\cF\simeq f^\dagger(\cF')$ for some $\cF'\in\MH(Y_2,n-d)$, where $d$ is the relative dimension of $f$. Since Hodge modules satisfy Zariski descent, we my replace $Y_1$ with an affine cover, so we assume that $f$ is affine. We can compactify $f$ to a projective morphism $\overline Y_1\to Y_2$. Using resolution of singularities we may assume that $\overline Y_1-Y_1$ is a locally principal divisor. If $j\colon Y_1\to\overline Y_1$ is the open immersion, then $j_{!*}\cF$ is an extension of $\cF$ to $\overline Y_1$. Next, $\Rat(\cF)$ descends to $Y_2$. This follows from the faithfulness of $\Rat$ and the fact that perverse sheaves satisfy smooth descent. Thus we can apply~\cite[Lemma~2.27]{SaitoMixed} to conclude that there is a mixed Hodge module $\cF'$ such that $\cF\simeq f^\dagger(\cF')$. It remains to show that $\cF'$ is pure of weight $n-d$. But this follows from~\cite[Lemma~2.25]{SaitoMixed}.
\end{proof}

\subsection{Equivariant Hodge modules on ind-schemes}\label{sect:EquivHodge} Assume that $Y$ is a separated scheme of finite type over $\C$ acted upon by a group $H$ of finite type over $\C$. Consider two smooth morphisms: the projection $p\colon H\times Y\to Y$ and the action $a\colon H\times Y\to Y$. An object of $\MH^H(Y,n)$ is a pair $(\cF,s)$, where $\cF\in\MH(Y,n)$, the equivariance morphism $s\colon p^\dagger\cF\xrightarrow{\simeq}a^\dagger\cF$ is an isomorphism, and $s$ satisfies the cocycle condition (note that by Proposition~\ref{pr:InvImage}\eqref{InvImage:i} $p^\dagger\cF$ and $a^\dagger\cF$ are pure Hodge modules). The morphisms in this category are morphisms of Hodge modules compatible with the equivariance isomorphisms.

\begin{lemma}\label{lm:ActsThroughQuotient}
Assume that $H\to H'$ is a surjective homomorphism of complex algebraic groups with connected kernel. If $H$ acts on $Y$ through its quotient $H'$, then we have a canonical equivalence of the categories $\MH^H(Y,n)=\MH^{H'}(Y,n)$.
\end{lemma}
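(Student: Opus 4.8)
The plan is to reduce the statement to a question about descent along the smooth surjection $Y \times H \to Y \times H'$ and then invoke the descent result established in Proposition~\ref{pr:SmoothDescent}. Let $\pi\colon H \to H'$ be the given surjection with connected kernel $K$, so that the quotient map $q\colon Y \times H \to Y \times H'$, $(y,h)\mapsto(y,\pi(h))$, is a smooth schematic surjective morphism, being a $K$-torsor fibration (it is in fact a Zariski-locally trivial fibration in the affine setting we care about, but we only need smoothness and surjectivity). Since $H$ acts on $Y$ through $H'$, both the action map $a_H\colon H\times Y\to Y$ and the projection $p_H$ factor through $q$; write $a_{H'},p_{H'}$ for the corresponding maps on $H'\times Y$, so $a_H = a_{H'}\circ q'$ and $p_H = p_{H'}\circ q'$ where $q'$ is $q$ with the factors reordered.

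First I would spell out the functor $\MH^{H'}(Y,n)\to\MH^{H}(Y,n)$ in one direction: given $(\cF,s')$ with $s'\colon p_{H'}^\dagger\cF\xrightarrow{\simeq}a_{H'}^\dagger\cF$, one sets $s:=q'^\dagger(s')$, using that $q'^\dagger p_{H'}^\dagger = p_H^\dagger$ and $q'^\dagger a_{H'}^\dagger = a_H^\dagger$ by functoriality of the shifted inverse image (Proposition~\ref{pr:InvImage} and the compatibility of $(-)^\dagger$ with composition); the cocycle condition for $s$ follows from pulling back the cocycle condition for $s'$. Faithfulness of this functor is immediate since it is the identity on underlying Hodge modules $\cF$ and on morphisms. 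The content is essential surjectivity and fullness, i.e.\ showing every $H$-equivariance structure descends uniquely to an $H'$-equivariance structure.

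The key step is therefore: given $(\cF,s)\in\MH^H(Y,n)$, show that $s\colon p_H^\dagger\cF\to a_H^\dagger\cF$ is of the form $q'^\dagger(s')$ for a (necessarily unique) isomorphism $s'\colon p_{H'}^\dagger\cF\to a_{H'}^\dagger\cF$. By Proposition~\ref{pr:SmoothDescent}, descent of morphisms along the smooth surjection $q'\colon H\times Y\to H'\times Y$ holds, so it suffices to check that the two pullbacks of $s$ to $(H\times Y)\times_{H'\times Y}(H\times Y)$ agree; this fibered product is $K\times H\times Y$ (with $K$ acting appropriately), and the two pullbacks of $s$ there coincide precisely because $p_H$ and $a_H$ are constant along the $K$-direction, so both pullbacks equal the pullback of $s$ along the projection, forcing equality. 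One then checks that the resulting $s'$ is an isomorphism (its pullback along the surjective $q'^\dagger$ is the isomorphism $s$, and $\Rat$ together with conservativity, or just descent of the inverse isomorphism, gives that $s'$ is invertible) and satisfies the cocycle condition on $H'\times H'\times Y$ (again by descent of the cocycle identity from $H\times H\times Y$, which surjects smoothly onto $H'\times H'\times Y$). Fullness is handled identically: a morphism $\phi\colon\cF_1\to\cF_2$ compatible with the $H$-structures $q'^\dagger(s'_1),q'^\dagger(s'_2)$ is automatically compatible with $s'_1,s'_2$ after applying $q'^\dagger$, and since $q'^\dagger$ is faithful on morphisms of Hodge modules this means $\phi$ is already $H'$-equivariant.

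The main obstacle is purely bookkeeping: one must carefully identify the fibered products $(H\times Y)\times_{H'\times Y}(H\times Y)$ and its triple analogue, verify that the canonical maps $q'^\dagger p_{H'}^\dagger\cong p_H^\dagger$ etc.\ are compatible in a way that lets the cocycle conditions transport cleanly, and confirm that connectedness of $K$ is not actually needed for this particular argument (it guarantees, together with the torsor structure, that the descended object is unique, but uniqueness here already follows from faithfulness of $q'^\dagger$; connectedness would matter if one instead wanted to compare with the more refined equivariant derived category). Since all of this is formal consequence of the smooth-descent proposition and the functoriality of $(-)^\dagger$ recorded above, I would present it compactly, doing the fibered-product computation once and leaving the cocycle verifications to the reader, as is done elsewhere in this section.
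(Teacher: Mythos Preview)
Your overall structure is the same as the paper's: reduce to showing that the $H$-equivariance isomorphism $s$ on $H\times Y$ descends along the smooth surjection $q'\colon H\times Y\to H'\times Y$, then invoke Proposition~\ref{pr:SmoothDescent}. The gap is in your justification of the descent hypothesis, namely that the two pullbacks $pr_1^\dagger s$ and $pr_2^\dagger s$ to $(H\times Y)\times_{H'\times Y}(H\times Y)$ agree. You argue that since $p_H$ and $a_H$ are constant along the $K$-direction, ``both pullbacks equal the pullback of $s$ along the projection.'' But this only shows that the \emph{source} and \emph{target} of $pr_1^\dagger s$ and $pr_2^\dagger s$ are canonically identified; it says nothing about the morphisms themselves. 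Concretely, writing the fibered product as $\{(h_1,h_2,y):\pi(h_1)=\pi(h_2)\}$ with $pr_i(h_1,h_2,y)=(h_i,y)$, the equalities $p_H\circ pr_1=p_H\circ pr_2$ and $a_H\circ pr_1=a_H\circ pr_2$ hold, yet $pr_1^\dagger s$ and $pr_2^\dagger s$ can differ unless you use more: either the cocycle condition on $s$ restricted to $K\times H\times Y$ together with the fact that a connected $K$ acting trivially admits only the identity equivariance structure, or the analogous known statement for perverse sheaves. Your proof invokes neither.

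This is also why your closing remark is wrong: connectedness of $K$ is \emph{essential}, not dispensable. For $H=\mu_2$, $H'=1$, $Y=pt$, $\cF=\Q^{\mathrm{Hdg}}$, the sign character of $\mu_2$ gives a nontrivial $H$-equivariance structure satisfying the cocycle condition which does not descend to $H'$; the two pullbacks of $s$ to the fibered product visibly disagree. The paper's proof handles the key step differently: it observes that the corresponding statement for perverse sheaves is known (this is where connectedness of $K$ is used), so $\Rat(s)$ descends, hence the two pullbacks of $\Rat(s)$ agree; faithfulness of $\Rat$ (Proposition~\ref{pr:HM_basic_properties}\eqref{basic:ii}) then forces the two pullbacks of $s$ to agree, and Proposition~\ref{pr:SmoothDescent} finishes. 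To repair your argument without appealing to perverse sheaves you would need to insert a genuine use of the cocycle condition and of the connectedness of $K$ at the point where you claim $pr_1^\dagger s=pr_2^\dagger s$.
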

\begin{proof}
Set $Z=H\times Y$, $Z'=H'\times Y$ and let $f\colon Z\to Z'$ be the projection. Let $\cF\in\MH(Y,n)$ and let $s$ be an equivariance isomorphism between the two pullbacks of $\cF$ to $Z$. We need to show that $s$ descends along $f$. Since a similar statement is known for perverse sheaves, 
we see that $\Rat(s)$ descends along $f$, which means that the two pullbacks of $\Rat(s)$ to $Z\times_{Z'}Z$ are equal. Since $\Rat$ is faithful, this implies that the two pullbacks of $s$ to $Z\times_{Z'}Z$ are also equal, and the statement follows from the descent (Proposition~\ref{pr:SmoothDescent}).
\end{proof}

Assume that $H=\lim\limits_{\infty\leftarrow m}H^{(m)}$ is a connected pro-algebraic complex group acting on an ind-scheme $Y$. We will assume that the morphisms $H^{(m)}\to H^{(m')}$ are surjective with connected kernels. Assume that $H$ acts \emph{nicely}, 
that is, every closed subscheme $Z\subset Y$ is contained in an $H$-invariant closed subscheme $Z'$ on which $H$ acts via one of its quotients $H_{(m)}$ (see~\cite[A.3]{GaitsgoryCentral}). We keep these assumptions on $H$ and the action through the end of Section~\ref{sect:EquivHodge}. We are going to define the category $\MH^H(Y,n)$ of pure polarizable $H$-equivariant Hodge modules of weight $n$. We can write $Y=\lim\limits_{l\to\infty} Y^{(l)}$, where $Y^{(l)}$ is $H$-invariant and $H$ acts through $H_{(m_l)}$ for a certain $m_l$. We may assume that $m_l$ is a non-decreasing sequence.

For $l<l'$ we have a functor
\[
   F_{l,l'}\colon\MH^{H_{(m_l)}}(Y^{(l)},n)=\MH^{H_{(m_{l'})}}(Y^{(l)},n)\to\MH^{H_{(m_{l'})}}(Y^{(l')},n),
\]
where the equality follows from Lemma~\ref{lm:ActsThroughQuotient} and the direct image functor is defined with a help of base change. It is easy to check that we have a coherent system of isomorphisms of functors $F_{l',l''}\circ F_{l,l'}\simeq F_{l,l''}$ defined whenever $l<l'<l''$. Set
\[
   \MH^H(Y,n):=\lim_{l\to\infty}\MH^{H_{(m_l)}}(Y^{(l)},n).
\]
One checks that this category does not depend on the choices up to a canonical equivalence.

\begin{proposition}\label{pr:ForgetEquiv}
\stepzero\noindstep\label{ForgetEquiv:i} The forgetful functor $\For\colon\MH^H(Y,n)\to\MH(Y,n)$ is a fully faithful embedding.\\
\noindstep\label{ForgetEquiv:ii} $\MH^H(Y,n)$ is a semisimple abelian $\Q$-linear category.
\end{proposition}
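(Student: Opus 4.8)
The plan is to reduce both assertions to the case where $Y$ is a scheme of finite type and $H$ is a \emph{connected} group of finite type acting on it, and then to transport the corresponding facts about perverse sheaves across the faithful functor $\Rat$. For the reduction, recall that $\MH^H(Y,n)=\lim_{l\to\infty}\MH^{H_{(m_l)}}(Y^{(l)},n)$ and $\MH(Y,n)=\lim_{l\to\infty}\MH(Y^{(l)},n)$, where the transition functors are direct images along the closed embeddings $Y^{(l)}\hookrightarrow Y^{(l')}$; by Kashiwara's equivalence these are fully faithful and exact with essential image closed under subobjects and quotients, and $\For$ is the direct limit of the forgetful functors at finite level. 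Hence full faithfulness of $\For$, as well as being semisimple abelian $\Q$-linear, passes from the finite-level categories to the limit; the only point worth checking for semisimplicity is that a simple object of $\MH^{H_{(m_l)}}(Y^{(l)},n)$ remains simple after a transition functor, which holds because such a functor is fully faithful with essential image closed under subobjects. So from now on $Y$ is a scheme of finite type and $H$ a connected group of finite type.

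For part~\eqref{ForgetEquiv:i}, faithfulness of $\For$ is immediate from the definition: a morphism of $\MH^H(Y,n)$ \emph{is} a morphism $\phi$ of underlying Hodge modules subject to a compatibility condition, and $\For(\phi)=\phi$. For fullness, given $(\cF,s),(\cG,t)\in\MH^H(Y,n)$ and an arbitrary morphism $\phi\colon\cF\to\cG$ of Hodge modules, I must show that $\phi$ is compatible with $s$ and $t$, i.e.\ that two prescribed morphisms $p^\dagger\cF\to a^\dagger\cG$ coincide. Since $\Rat$ is faithful (Proposition~\ref{pr:HM_basic_properties}\eqref{basic:ii}) and compatible with the shifted inverse images $p^\dagger$, $a^\dagger$ (Section~\ref{sect:InvImage}), it suffices to check this equality after applying $\Rat$, i.e.\ that $\Rat(\phi)$ is a morphism of $H$-equivariant perverse sheaves. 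As $H$ is connected, this is the classical fact that morphisms between equivariant perverse sheaves are automatically equivariant (the same kind of reduction to perverse sheaves as in the proof of Lemma~\ref{lm:ActsThroughQuotient}).

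For part~\eqref{ForgetEquiv:ii}, $\Q$-linearity and additivity are clear. I would first verify that $\MH^H(Y,n)$ is abelian, with kernels and cokernels computed in $\MH(Y,n)$: for a morphism $\phi\colon(\cF,s)\to(\cG,t)$, the functors $p^\dagger$ and $a^\dagger$ are exact on pure Hodge modules (being additive functors on a semisimple category; see Propositions~\ref{pr:InvImage} and~\ref{pr:HM_basic_properties}\eqref{basic:i}), so they commute with passage to $\ker\phi$ and cokernel, and — since $\phi$ is itself equivariant — the isomorphisms $s$ and $t$ restrict to equivariance structures there, exhibiting the kernel and cokernel of $\phi$, so equipped, as kernel and cokernel in $\MH^H(Y,n)$. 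Consequently $\For$ is exact and faithful, hence reflects short exact sequences, and every object of $\MH^H(Y,n)$ has finite length because its image in $\MH(Y,n)$ does. Semisimplicity then follows once we split an arbitrary short exact sequence $0\to(\cF',s')\to(\cF,s)\to(\cF'',s'')\to0$ in $\MH^H(Y,n)$: applying $\For$ gives a split sequence, as $\MH(Y,n)$ is semisimple (Proposition~\ref{pr:HM_basic_properties}\eqref{basic:i}), and by the full faithfulness from part~\eqref{ForgetEquiv:i} the resulting retraction $\cF\to\cF'$ is a morphism in $\MH^H(Y,n)$ splitting the original sequence; finite length then yields that every object is a finite direct sum of simples.

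The only genuinely non-formal ingredient is the classical statement for perverse sheaves used in part~\eqref{ForgetEquiv:i} (automatic equivariance of morphisms under a connected group), which enters through the faithfulness of $\Rat$ and its compatibility with $p^\dagger$ and $a^\dagger$; everything else is bookkeeping. The one place to be careful is the passage to the direct limit over $l$: one must make sure it preserves abelianness and semisimplicity, which relies precisely on the essential images of the transition functors being closed under subobjects.
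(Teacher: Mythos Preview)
Your proof is correct and follows essentially the same approach as the paper: reduce to the finite-type case, use faithfulness of $\Rat$ together with the classical automatic-equivariance fact for perverse sheaves under a connected group to prove fullness, and then deduce semisimplicity from full faithfulness into the semisimple category $\MH(Y,n)$. The paper is terser---it simply invokes the general principle that a full abelian subcategory of a semisimple abelian category is semisimple---whereas you spell out the splitting argument and the abelianness of $\MH^H(Y,n)$ explicitly, but the substance is the same.
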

\begin{proof}
    \eqref{ForgetEquiv:i} We may assume that $Y$ is a reduced scheme. It is clear that $\For$ is faithful. Let us show that $\For$ is full. We take $\cF,\cF'\in\MH^H(Y,n)$ and let $\phi\colon\For(\cF)\to \For(\cF')$ be a morphism. We need to show that it respects the $H$-equivariance structure. A similar statement is well known for perverse sheaves. 
    Thus $\Rat(\phi)$ respects the equivariant structures. It follows from the faithfulness of $\Rat$ that $\phi$ respects the equivariant structure as well.

    \eqref{ForgetEquiv:ii} It is easy to see that $\MH^H(Y,n)$ is a $\Q$-linear abelian category. Now it follows from part~\eqref{ForgetEquiv:i} and Proposition~\ref{pr:HM_basic_properties}\eqref{basic:i} that it is semisimple: indeed, a full abelian subcategory of a semisimple abelian category is necessarily semisimple.
\end{proof}

We use this proposition to identify the category $\MH^H(Y,n)$ with a full subcategory of $\MH(Y,n)$. Thus, we may view equivariance as a property rather than a structure. It is clear that $\Rat$ takes $\MH^H(Y,n)$ to $\Perv_\Q^H(Y,n)$. The proof of the following easy lemma is left to the reader.
We let $D^b\MH^H(Y,n)$ be the full subcategory of $D^b\MH(Y,n)$ whose objects are complexes with equivariant cohomology groups.

\begin{lemma}\label{lm:EquivPushFrwrd} Let $H$ act nicely on ind-schemes $Y_1$ and $Y_2$ and let $f\colon Y_1\to Y_2$ be an equivariant morphism, then:\\
(i) If $f$ is ind-proper, then $f_*(D^b\MH^H(Y_1,n))\subset D^b\MH^H(Y_2,n)$.\\
(ii) If $f$ is smooth of relative dimension $d$, then $f^\dagger(\MH^H(Y_2,n))\subset\MH^H(Y_1,n+d)$.\\
(iii) If $f$ is a locally closed embedding, then $f_{!*}(\MH^H(Y_1,n))\subset\MH^H(Y_2,n)$.
\end{lemma}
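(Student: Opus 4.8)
The plan is to reduce each of the three assertions to a statement about whether the relevant Hodge-module functor commutes with the smooth pullback along the quotient maps defining the equivariant structure, and then to invoke the already-established compatibilities. Recall that, by our definition, an object of $\MH^H(Y,n)$ is an object of $\MH(Y,n)$ together with an isomorphism $s\colon p^\dagger\cF\xrightarrow{\simeq}a^\dagger\cF$ satisfying the cocycle condition, where $p,a\colon H\times Y\to Y$ are the projection and the action; by Proposition~\ref{pr:ForgetEquiv}\eqref{ForgetEquiv:i} this datum is actually a property, so in all three parts it suffices to produce, for an object $\cF$ already carrying an equivariance isomorphism, an equivariance isomorphism on the image of $\cF$ under the functor in question. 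Since everything is defined by passing to a limit over $H$-invariant finite-type subschemes on which $H$ acts through a finite-type quotient $H_{(m)}$ (using Lemma~\ref{lm:ActsThroughQuotient}), I would first fix such a level, so that $H$ may be assumed of finite type and $Y_1,Y_2$ separated schemes of finite type.

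For part~(ii), with $f\colon Y_1\to Y_2$ smooth equivariant of relative dimension $d$, I would form the commutative square relating $H\times Y_1\to Y_1$ (projection and action) to $H\times Y_2\to Y_2$ via $\mathrm{id}_H\times f$ and $f$. The key point is base change: the square with $f$ and the projection $p$ is Cartesian, and the square with $f$ and the action $a$ is Cartesian as well (because $a$ on $H\times Y_i$ is obtained from the action on $Y_i$). Proposition~\ref{pr:InvImage}\eqref{InvImage:i} shows $f^\dagger$ lands in $\MH(Y_1,n+d)$, and the compatibility of $f^\dagger$ (equivalently $f^*$ up to shift and twist) with smooth base change — which is part of the package of properties of inverse images recalled in Section~\ref{sect:InvImage}, inherited from~\cite[(4.4.3)]{SaitoMixed} — produces canonical isomorphisms $p_1^\dagger f^\dagger\cF\cong (\mathrm{id}\times f)^\dagger p_2^\dagger\cF$ and $a_1^\dagger f^\dagger\cF\cong(\mathrm{id}\times f)^\dagger a_2^\dagger\cF$. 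Conjugating the equivariance isomorphism $s$ for $\cF$ through $(\mathrm{id}\times f)^\dagger$ and these two identifications yields an equivariance isomorphism for $f^\dagger\cF$; the cocycle condition transports likewise because $(\mathrm{id}\times\mathrm{id}\times f)^\dagger$ is faithful (use $\Rat$) and the analogous statement for perverse sheaves holds.

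For part~(i), with $f$ ind-proper equivariant, I would argue dually: one still has the two Cartesian squares (projection is automatically a base change; the action square is Cartesian as above), and proper base change for $f_*=f_!$ on mixed Hodge modules (\cite[Thm.~4.3]{SaitoMixed}, combined with $f_*=f_!$ for proper $f$ and the fact, used already in Proposition~\ref{pr:DirectImage}, that $f_*$ preserves weights) gives $p_2^\dagger f_*\cF\cong(\mathrm{id}\times f)_* p_1^\dagger\cF$ and $a_2^\dagger f_*\cF\cong(\mathrm{id}\times f)_* a_1^\dagger\cF$; again $s$ transports through $(\mathrm{id}\times f)_*$ to an equivariance datum on $f_*\cF$, and the cocycle condition descends by faithfulness of $\Rat$ plus the perverse-sheaf case. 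Working at the level of $D^b\MH^H(Y_i,n)$, one applies this to each cohomology sheaf, which is legitimate since the equivariance structure is defined cohomology-group-wise. Part~(iii) is then essentially immediate: an equivariant locally closed embedding $f$ has $f_{!*}$ compatible with smooth base change by Lemma~\ref{lm:basechange} (applied to the two Cartesian squares above, noting $p$ and $a$ are smooth schematic), so the equivariance isomorphism $s$ for $\cF\in\MH^H(Y_1,n)$ yields, via $p_2^\dagger f_{!*}\cF = (f')_{!*}p_1^\dagger\cF \xrightarrow{(f')_{!*}(s)} (f')_{!*}a_1^\dagger\cF = a_2^\dagger f_{!*}\cF$, an equivariance isomorphism for $f_{!*}\cF$, with the cocycle condition again checked after applying $\Rat$. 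The only mildly delicate point — and the one I would treat most carefully — is verifying the cocycle condition for the transported isomorphisms; but in every case this reduces, via faithfulness and exactness of $\Rat$ (Propositions~\ref{pr:HM_basic_properties}\eqref{basic:ii} and~\ref{pr:DerRat}) and the conservativity that follows, to the corresponding well-known statement for equivariant perverse sheaves, so no genuinely new input is needed; I would leave the bookkeeping to the reader, as the lemma statement already suggests.
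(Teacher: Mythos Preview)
Your proposal is correct and is precisely the routine verification the paper has in mind: the paper itself offers no proof, stating only that ``the proof of the following easy lemma is left to the reader,'' and your argument---transporting the equivariance isomorphism through the relevant functor via the appropriate base change (smooth base change for $f_*$, composition of smooth pullbacks for $f^\dagger$, Lemma~\ref{lm:basechange} for $f_{!*}$) and checking the cocycle condition after applying $\Rat$---is the natural way to fill this in.
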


We have the following consequence of smooth descent (Proposition~\ref{pr:SmoothDescent}).

\begin{proposition}\label{pr:TorsorDescent}
  Let $f\colon Y_1\to Y_2$ be an $H$-torsor, where $H$ is an algebraic group acting nicely on $Y_1$. Then the functor $f^\dagger$ induces an equivalence of the categories $\MH(Y_2,n)\xrightarrow{\simeq}\MH^H(Y_1,n+d)$, where $d=\dim H$. In particular, if $H$ is connected, then $f^\dagger$ identifies $\MH(Y_2,n)$ with a full subcategory of $\MH(Y_1,n+d)$.
\end{proposition}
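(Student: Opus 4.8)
The plan is to reduce the statement to ordinary smooth descent (Proposition~\ref{pr:SmoothDescent}) together with the compatibility of $f^\dagger$ with the equivariance formalism. First I would recall that, since $f\colon Y_1\to Y_2$ is an $H$-torsor, the two maps $p,a\colon H\times Y_1\to Y_1$ (projection and action) fit into a canonical identification $H\times Y_1\xrightarrow{\simeq}Y_1\times_{Y_2}Y_1$ under which $p$ and $a$ correspond to the two projections $\mathrm{pr}_1,\mathrm{pr}_2$. Hence an $H$-equivariance datum on $\cF\in\MH(Y_1,n+d)$, namely an isomorphism $s\colon p^\dagger\cF\xrightarrow{\simeq}a^\dagger\cF$ satisfying the cocycle condition, is literally the same as a descent datum for $\cF$ along $f$. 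Therefore Proposition~\ref{pr:SmoothDescent} (descent for smooth schematic surjective morphisms, which applies since $f$ is such a morphism of relative dimension $d$) says exactly that the functor $f^\dagger$ from descent data on $Y_2$ to objects of $\MH(Y_1,n+d)$ equipped with a descent datum is an equivalence; but the latter category is $\MH^H(Y_1,n+d)$. One also needs that $f^\dagger$ lands in the correct weight, which is Proposition~\ref{pr:InvImage}\eqref{InvImage:i}, and that it respects the equivariant subcategory, which is Lemma~\ref{lm:EquivPushFrwrd}(ii) (or immediate since $f^\dagger p^\dagger=f^\dagger a^\dagger$ with $f\circ p=f\circ a$). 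For morphisms, fully faithfulness of the induced functor follows from the ``descent for morphisms'' half of the proof of Proposition~\ref{pr:SmoothDescent}, applied to $\cF,\cF'$ already known to come from $Y_2$.

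In more detail, the steps in order are: \textbf{(1)} Identify $H\times Y_1=Y_1\times_{Y_2}Y_1$ as an $H$-torsor, matching $(p,a)$ with $(\mathrm{pr}_1,\mathrm{pr}_2)$; check compatibility of $f^\dagger$ with base change along these squares so that $p^\dagger f^\dagger\cG=a^\dagger f^\dagger\cG$ canonically for $\cG\in\MH(Y_2,n)$, giving a natural equivariance structure on $f^\dagger\cG$. \textbf{(2)} Conclude that $f^\dagger$ defines a functor $\MH(Y_2,n)\to\MH^H(Y_1,n+d)$; faithfulness is inherited from faithfulness of $f^\dagger$ (compose with $\Rat$, as in the proof of Proposition~\ref{pr:SmoothDescent}), and fullness is the ``descent for morphisms'' argument of that same proof (the pullbacks of a morphism to $Y_1\times_{Y_2}Y_1$ agree precisely because the original morphism was equivariant). \textbf{(3)} Essential surjectivity: given $(\cF,s)\in\MH^H(Y_1,n+d)$, reinterpret $s$ as a descent datum along $f$ via step (1), apply the ``descent for objects'' half of Proposition~\ref{pr:SmoothDescent} to get $\cF'\in\MHM(Y_2)$ with $f^\dagger\cF'\simeq\cF$, and note $\cF'$ is pure of weight $n$ by the weight argument already used there (\cite[Lemma~2.25]{SaitoMixed}, or by faithfulness of $\Rat$ plus $\cF'\in\MH(Y_2,n)\Leftrightarrow f^\dagger\cF'\in\MH(Y_1,n+d)$). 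This proves the equivalence $\MH(Y_2,n)\xrightarrow{\simeq}\MH^H(Y_1,n+d)$. \textbf{(4)} For the last sentence: if $H$ is connected, then by Proposition~\ref{pr:ForgetEquiv}\eqref{ForgetEquiv:i} the forgetful functor $\MH^H(Y_1,n+d)\to\MH(Y_1,n+d)$ is fully faithful, so composing with the equivalence from step (3) identifies $\MH(Y_2,n)$ with a full subcategory of $\MH(Y_1,n+d)$.

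The routine points — Kashiwara's equivalence reductions, the passage from ind-schemes to reduced schemes of finite type, and the bookkeeping of weights and Tate twists hidden in the shift $[-d](-d)$ defining $f^\dagger$ — are all either already packaged in the cited results or entirely analogous to arguments in the proof of Proposition~\ref{pr:SmoothDescent}, so I would state them briefly and refer back. The one genuine point requiring care, and the place where I expect the main (though still modest) obstacle to lie, is \textbf{step (1)}: one must verify that the canonical isomorphism $H\times Y_1\cong Y_1\times_{Y_2}Y_1$ is compatible with the base-change isomorphisms for $f^\dagger$ in a way that makes the resulting equivariance structure on $f^\dagger\cG$ satisfy the cocycle condition — i.e.\ that ``descent datum along $f$'' and ``$H$-equivariance structure'' are identified as \emph{categories with the same cocycle conditions}, not merely as having the same underlying isomorphisms. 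Once that identification is set up cleanly, everything else is a direct appeal to Proposition~\ref{pr:SmoothDescent}.
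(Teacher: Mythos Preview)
Your proposal is correct and follows exactly the approach the paper indicates: the paper states this proposition without proof, noting only that it is a ``consequence of smooth descent (Proposition~\ref{pr:SmoothDescent}),'' and your argument spells out precisely that deduction (identify $H$-equivariance data with descent data along the torsor, apply Proposition~\ref{pr:SmoothDescent}, then invoke Proposition~\ref{pr:ForgetEquiv}\eqref{ForgetEquiv:i} for the final sentence).
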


Since $H$ is assumed to act nicely on $Y$, every orbit $\cO\to Y$ is a locally closed subscheme. Let $j_\cO\colon\cO\to Y$ be the locally closed embedding and let $a_\cO\colon\cO\to pt$ be the projection to the point. Let $\cH$ be a pure polarizable Hodge structure. Set $\IC_\cO(\cH):=(j_\cO)_{!*}a_\cO^\dagger\cH$.

We will say that $H$ acts on $Y$ with \emph{ind-finitely many orbits\/} if we can write $Y=\lim_{l\to\infty}Y^{(l)}$, where $Y^{(l)}$ is a closed $H$-invariant subscheme on which $H$ acts with finitely many orbits. The following proposition give full description of the abelian semisimple category $\MH^H(Y,n)$ in the case when $H$ acts with ind-finitely many orbits.

\begin{proposition}\label{pr:IndFinitelyMany}
Let $H$ act nicely and with ind-finitely many orbits on an ind-scheme $Y$.

(i) The forgetful functor $\MH^H(Y,n)\to\MH(Y,n)$ induces an equivalence between $\MH^H(Y,n)$ and the strictly full subcategory in $\MH(Y,n)$ whose simple objects are isomorphic to $\IC_\cO(\cH)$, where $\cO$ ranges over the set of orbits and $\cH$ ranges over irreducible polarizable Hodge structures of weight $n-\dim\cO$.

(ii) For Hodge structures $\cH$ and $\cH'$ and an orbit $\cO$ we have a canonical isomorphism
\[
    \Hom(\IC_\cO(\cH),\IC_{\cO}(\cH')=\Hom(\cH,\cH').
\]
\end{proposition}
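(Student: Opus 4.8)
The plan is to reduce the whole statement to Saito's local description of Hodge modules with strict support (Proposition~\ref{pr:Saito3.21}) together with the support decomposition~\eqref{eq:support_decomposition} and the behaviour of the intermediate extension functor. First I would handle part~(i). Fix an integral closed subscheme $Z\subset Y$; by the hypothesis that $H$ acts with ind-finitely many orbits, $Z$ is the closure of a unique orbit $\cO$, and $\cO$ is smooth and open in $Z$. Let $j_\cO\colon\cO\hookrightarrow Y$ be the locally closed embedding. Combining Proposition~\ref{pr:Saito3.21} with Kashiwara's equivalence gives an equivalence $\MH_Z(Y,n)\simeq\MH_{\cO}(\cO,n)=\VHS_{gen}(\cO,n-\dim\cO)=\VHS(\cO,n-\dim\cO)$, the last equality because $\cO$ is already smooth (any smaller open subset is non-canonically the same by Proposition~\ref{pr:Saito3.21}(ii), but on $\cO$ the generic category is attained). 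Now I claim that under the forgetful functor the equivariant objects in $\MH_Z(Y,n)$ correspond exactly to the $H$-equivariant objects of $\VHS(\cO,n-\dim\cO)$, i.e.\ to $H$-equivariant polarizable variations of Hodge structure on the homogeneous space $\cO$. This follows from Lemma~\ref{lm:EquivPushFrwrd}(iii) (the intermediate extension preserves equivariance) in one direction, and in the other direction from the fact that $j_\cO^*$ preserves equivariance together with the characterization in Lemma~\ref{lm:EqualTo!*} which identifies such an $\cF$ with $j_{\cO\,!*}j_\cO^*\cF$.

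The next step is to compute $H$-equivariant VHS on the orbit $\cO\cong H/H_x$, where $H_x$ is the (pro-algebraic) stabilizer of a point $x\in\cO$. Since $H$ is connected, an $H$-equivariant local system on $\cO$ is automatically trivial as a local system, so an $H$-equivariant VHS on $\cO$ is the same as a polarizable Hodge structure $\cH$ on the fibre at $x$ together with compatible $H$-equivariance; because $H$ is connected the $H$-action on the underlying local system is forced, and the Hodge filtration, being $H$-invariant and already determined pointwise, contributes no extra data. Concretely, via $a_\cO^\dagger$ (pullback along $a_\cO\colon\cO\to pt$, which by Proposition~\ref{pr:InvImage}(ii)/(iv) is compatible with Proposition~\ref{pr:Saito3.21}) every $H$-equivariant VHS of weight $n-\dim\cO$ on $\cO$ is of the form $a_\cO^\dagger\cH$ for a unique polarizable Hodge structure $\cH$ of weight $n-\dim\cO$, and this identification is an equivalence of categories. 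Feeding this back through $j_{\cO\,!*}$ gives that the simple $H$-equivariant Hodge modules of weight $n$ with strict support $Z=\overline\cO$ are precisely the $\IC_\cO(\cH)=j_{\cO\,!*}a_\cO^\dagger\cH$ with $\cH$ irreducible of weight $n-\dim\cO$, and that $\MH^H_Z(Y,n)$ is equivalent to $\MH(pt,n-\dim\cO)$. Summing over $Z$ using~\eqref{eq:support_decomposition} and its equivariant analogue (which follows because the decomposition is canonical, hence compatible with equivariance, cf.\ Proposition~\ref{pr:ForgetEquiv}(i)) yields part~(i).

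For part~(ii), since $\IC_\cO(\cH)$ and $\IC_\cO(\cH')$ both have strict support $\overline\cO$, applying the fully faithful functors $j_\cO^*$ (Lemma~\ref{lm:j*composition}(i): $j_\cO^*$ is left inverse to $j_{\cO\,!*}$, and on the strict-support category it is an equivalence by Proposition~\ref{pr:Saito3.21}(ii)) reduces the Hom-computation to $\Hom_{\VHS(\cO,n-\dim\cO)}(a_\cO^\dagger\cH,a_\cO^\dagger\cH')$, and then the equivalence $a_\cO^\dagger\colon\MH(pt,\bullet)\xrightarrow{\simeq}\VHS^H(\cO,\bullet)$ from the previous paragraph identifies this with $\Hom(\cH,\cH')$. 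Alternatively one invokes Proposition~\ref{pr:IntermExt}-type full faithfulness of $j_{\cO\,!*}$ directly. The main obstacle I anticipate is the second step: carefully justifying that an $H$-equivariant polarizable VHS on the homogeneous space $\cO=H/H_x$ is nothing more than a polarizable Hodge structure on the fibre—one must check that connectedness of $H$ (and the fact that $H$ acts through a finite-type quotient on the relevant orbit closure, which is built into the ``nice action'' hypothesis) rigidifies both the local system and the Hodge filtration, leaving no room for automorphisms of the homogeneous-space structure to act, so that the category of such VHS is genuinely $\MH(pt,n-\dim\cO)$ and not merely something mapping to it; the polarizability has to be tracked through $a_\cO^\dagger$ and $j_{\cO\,!*}$ as well, but that is guaranteed by Propositions~\ref{pr:InvImage} and the definition of $\IC_\cO$.
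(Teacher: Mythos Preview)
Your argument is essentially the paper's own, organized strict-support-component by strict-support-component rather than simple-object by simple-object; both routes hinge on Proposition~\ref{pr:Saito3.21}, the support decomposition~\eqref{eq:support_decomposition}, and the observation that an $H$-equivariant polarizable VHS on a homogeneous orbit is pulled back from a point.

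One slip to fix: the sentence ``Fix an integral closed subscheme $Z\subset Y$; by the hypothesis that $H$ acts with ind-finitely many orbits, $Z$ is the closure of a unique orbit $\cO$'' is false as written---a single closed point inside a positive-dimensional orbit is integral but not an orbit closure. What is true (and what you yourself invoke at the end, ``its equivariant analogue \dots\ because the decomposition is canonical'') is that for an object of $\MH^H(Y,n)$ each strict-support component is $H$-invariant, and an $H$-invariant integral closed subscheme in a space with ind-finitely many orbits is indeed an orbit closure. So begin by restricting to $H$-invariant $Z$ from the outset; the paper makes the same move by first observing that $\Rat(\cF)$ is $H$-equivariant, hence $\supp\cF$ is $H$-invariant. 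A second, more minor point: to pass from $\VHS_{gen}(Z,n-\dim Z)$ to $\VHS(\cO,n-\dim\cO)$ you need that the open locus on which a given equivariant module is a VHS contains $\cO$; this is exactly the paper's remark that this locus is $H$-invariant (since it is intrinsically defined), hence contains the open orbit.
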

\begin{proof}
(i) Let $\cH$ be an irreducible polarizable Hodge structure, then it follows from Lemma~\ref{lm:EquivPushFrwrd} that $\IC_\cO(\cH)$ has an equivariant structure. Assume that it can be written as the direct sum $\cF_1\oplus\cF_2$. Then $\cF_1$ and $\cF_2$ necessarily have strict support $\overline\cO$. Thus, over some open subset of $\cO$ both $\cF_1$ and $\cF_2$ are variations of Hodge structures. This, however, contradicts irreducibility of $\cH$. Thus $\IC_\cO(\cH)$ is a simple object.

Conversely, assume that $\cF$ is a simple object of $\MH^H(Y,n)$. Then it is also a simple object of $\MH(Y,n)$ (as otherwise we would have a non-trivial idempotent endomorphism).  By~\eqref{eq:support_decomposition} $Z=\supp(\cF)$ is an integral subscheme and we can write $\cF=i_*\cF'$, where $i\colon Z\to Y$ is the closed embedding. Since $\Rat(\cF)$ is $H$-equivariant, we see that $Z$ is $H$-invariant and that $\cF'\in\MH^H(Z,n)$. Since the action has ind-finitely many orbits, some orbit $\cO$ is open in $Z$. By Proposition~\ref{pr:Saito3.21}, generically on $Z$, $\cF'$ is a polarizable variation of Hodge structures. Let $W$ be the largest open subset of $Z$ such that $\cF'|_W$ is a variation of Hodge structures. It is easy to see that $W$ is $H$-invariant. 
We see that $\cO\subset W$. Thus, $\cV:=\cF'|_\cO$ is an $H$-equivariant variation of Hodge structures. Since $H$ acts transitively on $\cO$, it is easy to see that~$\cV$ is isomorphic to~$a_\cO^\dagger\cH$ for a Hodge structure $\cH$. It is clear that $\cH$ is polarizable and has weight $n-\dim\cO$. Now, by Proposition~\ref{pr:Saito3.21} we have
\[
    \cF\simeq(j_\cO)_{!*}\cV\simeq\IC_\cO(\cH).
\]
Clearly, $\cH$ is irreducible.

(ii) It follows from the definitions that $\IC_\cO(\cH)$ and $\IC_{\cO}(\cH')$ have strict support equal to the closure of $\cO$. Now, using Kashiwara's equivalence and Proposition~\ref{pr:Saito3.21}(ii) we get an isomorphism
\[
    \Hom(\IC_\cO(\cH),\IC_{\cO}(\cH')=\Hom(a_\cO^\dagger\cH,a_\cO^\dagger\cH')=\Hom(\cH,\cH'),
\]
where the last isomorphism follows, for example, from Proposition~\ref{pr:InvImage}\eqref{InVImage:VHS} and a similar statement for constant VHS.
\end{proof}

\section{Tannakian structure on $\MH^{G_\cO}(\Gr_G)$}\label{sect:tannakian}
In this section we equip $\MH^{G_\cO}(\Gr_G)$ with a structure of neutral Tannakian category.

\subsection{$G_\cO$-equivariant Hodge modules on $\Gr_G$}\label{sect:irr}
Put $\cK:=\C((t))=\C[[t]][t^{-1}]$. Let $G_\cK$ denote the loop group representing the functor $R\mapsto G\bigl(R((t))\bigr)$ from the category of $\C$-algebras to the category of sets. Then the affine Grassmannian is canonically isomorphic to the quotient $G_\cK/G_\cO$.

Fix a Borel subgroup $B\subset G$ and a split maximal torus $T\subset B$. Let $X_*:=\Hom(\gm\C,T)$ be the co-character lattice and let $X_+\subset X_*$ be the semigroup of $B$-dominant co-characters. Every $\lambda\in X_*$ gives a point $t^\lambda\in G_\cK(\C)$ defined by post-composing $\lambda$ with $T\hookrightarrow G$ and pre-composing with $\Spec\cK\to\gm\C$. Consider the $G_\cO$-orbit $\Gr^\lambda:=G_\cO\cdot(t^\lambda\cdot G_\cO)\subset\Gr_G$. It is well-known that every $G_\cO$-orbit in $\Gr_G$ is of the form $\Gr^\lambda$ and $\Gr^\lambda=\Gr^{\lambda'}$ if and only if $\lambda$ and $\lambda'$ are in the same orbit under the action of the Weyl group of $G$ on $X_*$. Thus we have a stratification
\[
    \Gr_G=\bigcup_{\lambda\in X_+}\Gr^\lambda.
\]
For the proofs we refer the reader to~\cite{BruhatTits} and~\cite{AlperHalpernHeinlotch_Decomposition}.

Recall that we have a presentation $\Gr_G=\lim_{l\to\infty}\Gr_G^{(l)}$, where $\Gr_G^{(l)}$ are $G_\cO$-invariant projective schemes (cf.~\eqref{eq:GrIndSch}). Note that for $m\gg0$ the group $G_\cO$ acts on $\Gr_G^{(l)}$ through a finite dimensional quotient $G_{(m)}$, where $G_{(m)}$ represents the functor $R\mapsto G(R[t]/t^m)$. In particular, the group $G_\cO$ acts nicely on $\Gr_G$ and we can consider the category $\MH^{G_\cO}(\Gr_G,n)$. Recall that $\MH^{G_\cO}(\Gr_G)=\bigoplus_{n\in\Z}\MH^{G_\cO}(\Gr_G,n)$.

Take $\lambda\in X_+$ and let $\cH$ be a pure polarizable Hodge structure of weight $n-\dim\Gr^\lambda$. Set $\IC_\lambda(\cH):=\IC_{\Gr^\lambda}(\cH)$ so that $\IC_\lambda(\cH)\in\MH^{G_\cO}(\Gr_G,n)$.

Combining Proposition~\ref{pr:ForgetEquiv} and Proposition~\ref{pr:IndFinitelyMany} we get the description of $\MH^{G_\cO}(\Gr_G)$ up to equivalence:

\begin{proposition}\label{pr:equiv}
(i) The forgetful functor $\MH^{G_\cO}(\Gr_G)\to\MH(\Gr_G)$ induces an equivalence between $\MH^{G_\cO}(\Gr_G)$ and a full abelian semisimple subcategory in $\MH(\Gr_G)$ whose simple objects are isomorphic to $\IC_\lambda(\cH)$, where $\lambda$ ranges over $X_+$ and $\cH$ ranges over irreducible Hodge structures.

(ii) For Hodge structures $\cH$ and $\cH'$ we have a canonical isomorphism
\[
    \Hom(\IC_\lambda(\cH),\IC_\lambda(\cH'))=\Hom(\cH,\cH').
\]
\end{proposition}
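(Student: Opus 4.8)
The plan is to deduce Proposition~\ref{pr:equiv} directly from the two more general results we already have at our disposal. First I would observe that $G_\cO$ acts nicely on $\Gr_G$ (already noted in the paragraph preceding the statement, since each $\Gr_G^{(l)}$ is $G_\cO$-invariant and the action factors through a finite-dimensional quotient $G_{(m)}$) and that it acts with ind-finitely many orbits: indeed, the stratification $\Gr_G=\bigcup_{\lambda\in X_+}\Gr^\lambda$ has the property that each $\Gr_G^{(l)}$, being a finite union of orbits, is acted on with finitely many orbits. Hence both Proposition~\ref{pr:ForgetEquiv} and Proposition~\ref{pr:IndFinitelyMany} apply with $H=G_\cO$ and $Y=\Gr_G$.

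Next I would invoke Proposition~\ref{pr:IndFinitelyMany}(i): the forgetful functor identifies $\MH^{G_\cO}(\Gr_G,n)$ with the strictly full subcategory of $\MH(\Gr_G,n)$ whose simple objects are the $\IC_\cO(\cH)$ with $\cO$ an orbit and $\cH$ an irreducible polarizable Hodge structure of weight $n-\dim\cO$. Since every orbit is $\Gr^\lambda$ for a unique $\lambda\in X_+$, and $\IC_\lambda(\cH)$ was \emph{defined} to be $\IC_{\Gr^\lambda}(\cH)$, this is exactly the statement of part~(i) once we sum over $n\in\Z$: running over all weights $n$ and all irreducible $\cH$ of the appropriate weight is the same as running over all irreducible $\cH$ whatsoever, with the weight of $\IC_\lambda(\cH)$ then determined by $n=\mathrm{wt}(\cH)+\dim\Gr^\lambda$. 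Full faithfulness and the abelian semisimple conclusion come from Proposition~\ref{pr:ForgetEquiv}\eqref{ForgetEquiv:i} and~\eqref{ForgetEquiv:ii} together with Proposition~\ref{pr:HM_basic_properties}\eqref{basic:i}.

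Part~(ii) is then immediate from Proposition~\ref{pr:IndFinitelyMany}(ii) applied to the single orbit $\cO=\Gr^\lambda$, which gives $\Hom(\IC_\lambda(\cH),\IC_\lambda(\cH'))=\Hom(\cH,\cH')$; I would only note that for this to make sense both $\cH$ and $\cH'$ should be taken of the same weight (otherwise both sides vanish), which is automatic since $\Hom$ is computed inside the graded category $\MH^{G_\cO}(\Gr_G)=\bigoplus_n\MH^{G_\cO}(\Gr_G,n)$ and $\MH(pt)=\bigoplus_n\MH(pt,n)$ respectively.

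Honestly, there is no real obstacle here — this proposition is purely a matter of specializing the general machinery of Section~\ref{sect:BlBox} to the affine Grassmannian. The only thing worth spelling out, and the place where a careless reader might stumble, is the bookkeeping of weights when passing from the weight-$n$ categories to the $\Z$-graded ones: one must check that the shift $n\mapsto n-\dim\Gr^\lambda$ in Proposition~\ref{pr:IndFinitelyMany} interacts correctly with the direct sum over $n$, i.e. that $\lambda$ and $\cH$ together freely parametrize the simple objects of $\MH^{G_\cO}(\Gr_G)$ with no constraint linking them. That is straightforward but should be stated.
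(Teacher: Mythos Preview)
Your proposal is correct and follows exactly the paper's own approach: the paper states the proposition as an immediate consequence of combining Proposition~\ref{pr:ForgetEquiv} and Proposition~\ref{pr:IndFinitelyMany}, without further argument. Your additional remarks on verifying the hypotheses (nice action, ind-finitely many orbits) and on the weight bookkeeping when summing over $n$ are accurate and merely make explicit what the paper leaves implicit.
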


We often use this proposition tacitly to identify the two categories.

\subsection{Convolution of Hodge modules}\label{sect:conv} We want to define a convolution on the category $\MH^{G_\cO}(\Gr_G)$. Consider the convolution Grassmannian
\[
    \Gr_{conv}:=G_\cK\times^{G_\cO}\Gr_G.
\]
This is a fibration over $\Gr_G$ with the fiber isomorphic to $\Gr_G$. In particular, $\Gr_{conv}$ is an ind-proper ind-scheme. Let $\cF_1,\cF_2\in\MH^{G_\cO}(\Gr_G)$. We first need to define the twisted exterior product $\cF_1\twext\cF_2$ on $\Gr_{conv}$. Morally, the definition is as follows. Let $\tilde\cF_1$ be the pullback of $\cF_1$ to $G_\cK$. Then the $G_\cO$-equivariance shows that $\tilde\cF_1\boxtimes\cF_2$ descends from $G_\cK\times\Gr_G$ to the Hodge module $\cF_1\twext\cF_2$ on $\Gr_{conv}$. We prefer to avoid using Hodge modules on $G_\cK$ because $G_\cK$ is not of ind-finite
type.

We proceed as follows. Let $l$ be large enough, then we may view $\cF_2$ as an object of $\MH^{G_\cO}(\Gr_G^{(l)})$. Let $G_\cO$ act on $\Gr_G^{(l)}$ through its quotient $G_{(m)}$. Thus, we may view $\cF_2$ as an object of $\MH^{G_{(m)}}(\Gr_G^{(l)})$ (by Lemma~\ref{lm:ActsThroughQuotient}).

Denote the kernel of $G_\cO\to G_{(m)}$ by $G^{(m)}$ (this is the $m$-th congruence subgroup) and consider the ind-scheme of ind-finite type $G_\cK/G^{(m)}$; the morphism $p\colon G_\cK/G^{(m)}\to\Gr_G$ is a $G_{(m)}$-torsor.  Consider the closed embedding.
\begin{equation}\label{eq:j}
        i_{m,l}\colon(G_\cK/G^{(m)})\times^{G_{(m)}}\Gr_G^{(l)}=
        G_\cK\times^{G_\cO}\Gr_G^{(l)}\to\Gr_{conv}.
\end{equation}
The Hodge module $p^\dagger\cF_1\boxtimes\cF_2\in\MH((G_\cK/G^{(m)})\times\Gr_G^{(l)})$ is easily seen to be $G_{(m)}$-equivariant, so it descends to a Hodge module $\tilde\cF$ on $(G_\cK/G^{(m)})\times^{G_{(m)}}\Gr_G^{(l)}$ by Proposition~\ref{pr:TorsorDescent}. We set
\[
  \cF_1\twext\cF_2:=(i_{m,l})_*\tilde\cF.
\]

Let $m\colon\Gr_{conv}\to\Gr_G$ be the morphism, induced by the action of $G_\cK$ on $\Gr_G$. It is ind-proper because $\Gr_{conv}$ is ind-proper.
\begin{definition}\label{def:conv}
\[
    \cF_1\star\cF_2:=H^0(m_*(\cF_1\twext\cF_2)).
\]
\end{definition}

It is easy to see that the convolution is functorial.

\begin{lemma}\label{lm:RatMonoidal}
\stepzero\noindstep\label{rat_i} We have $H^j(m_*(\cF_1\twext\cF_2))=0$ for all $j\ne0$.

\noindstep\label{rat_ii} $\Rat$ is a monoidal functor.
\end{lemma}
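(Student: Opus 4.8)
The plan is to deduce both statements from the corresponding facts for perverse sheaves together with the compatibilities of $\Rat$ already assembled in Section~\ref{sect:BlBox}. The key point is that $\Rat$ is faithful, exact and conservative (Proposition~\ref{pr:HM_basic_properties}\eqref{basic:ii}), compatible with $\boxtimes$ (Proposition~\ref{pr:boxtimes}(i)), with smooth pullbacks $f^\dagger$ and with direct images for ind-proper morphisms (Proposition~\ref{pr:DirectImage}(i)), and that it takes the descent data in the construction of $\cF_1\twext\cF_2$ to the analogous descent data for perverse sheaves. Consequently $\Rat(\cF_1\twext\cF_2)$ is canonically identified with the twisted exterior product $\Rat(\cF_1)\twext\Rat(\cF_2)$ of the underlying perverse sheaves on $\Gr_{conv}$, and $\Rat(m_*(\cF_1\twext\cF_2))=m_*\bigl(\Rat(\cF_1)\twext\Rat(\cF_2)\bigr)$ as complexes of perverse sheaves on $\Gr_G$.

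For part~\eqref{rat_i}, I would invoke the semismallness of the convolution morphism $m\colon\Gr_{conv}\to\Gr_G$, which is the geometric heart of the Mirkovic--Vilonen argument: since $\Rat(\cF_1)$ and $\Rat(\cF_2)$ are (shifted) IC-sheaves on $G_\cO$-orbits, $\Rat(\cF_1)\twext\Rat(\cF_2)$ is a direct sum of shifted IC-sheaves on $\Gr_{conv}$, and $m$ restricted to the relevant support is a semismall (in fact stratified-semismall) proper map. Hence $m_*\bigl(\Rat(\cF_1)\twext\Rat(\cF_2)\bigr)$ is a perverse sheaf, i.e.\ concentrated in cohomological degree $0$. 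Therefore $\Rat\bigl(H^j(m_*(\cF_1\twext\cF_2))\bigr)=H^j\Rat\bigl(m_*(\cF_1\twext\cF_2)\bigr)=0$ for $j\ne0$, using that $\Rat$ commutes with the cohomology functor (Proposition~\ref{pr:DerRat}). By conservativity of $\Rat$ this forces $H^j(m_*(\cF_1\twext\cF_2))=0$ for $j\ne0$.

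For part~\eqref{rat_ii}, once \eqref{rat_i} is known we have $m_*(\cF_1\twext\cF_2)=\cF_1\star\cF_2$ placed in degree $0$ of $D^b\MH(\Gr_G)$, and applying $\Rat$ gives a canonical isomorphism $\Rat(\cF_1\star\cF_2)=H^0\bigl(m_*(\Rat(\cF_1)\twext\Rat(\cF_2))\bigr)=\Rat(\cF_1)\star\Rat(\cF_2)$, the last equality being the definition of convolution of perverse sheaves on $\Gr_G$ from \cite{MirkovicVilonen}. Functoriality of this isomorphism in $\cF_1,\cF_2$ is automatic since every step (twisted product, $m_*$, $H^0$) is functorial and compatible with $\Rat$; compatibility with the associativity and unit constraints is likewise inherited from the perverse-sheaf side, so $\Rat$ is monoidal. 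The only genuine subtlety — the step I expect to be the main obstacle to write carefully — is checking that the descent construction of $\cF_1\twext\cF_2$ via $i_{m,l}$ and Proposition~\ref{pr:TorsorDescent} is intertwined by $\Rat$ with the usual construction of the twisted external product of perverse sheaves, uniformly in the auxiliary parameters $m,l$; this requires unwinding the definitions of $p^\dagger$, of the $G_{(m)}$-equivariant structure on $p^\dagger\cF_1\boxtimes\cF_2$, and of the descent equivalence, and verifying these match the perverse-sheaf analogues, but all the needed compatibilities of $\Rat$ have been recorded in Section~\ref{sect:BlBox}.
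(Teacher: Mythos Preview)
Your proposal is correct and follows essentially the same approach as the paper: both establish the canonical identification $\Rat(m_*(\cF_1\twext\cF_2))=\Rat(\cF_1)\star\Rat(\cF_2)$ from the recorded compatibilities of $\Rat$ with $\boxtimes$, smooth pullbacks, descent, and ind-proper pushforward, then invoke conservativity of $\Rat$ together with~\cite[Prop.~4.2]{MirkovicVilonen} to conclude~\eqref{rat_i}, from which~\eqref{rat_ii} is immediate. Your write-up is in fact slightly more explicit than the paper's (you spell out the semismallness mechanism behind the Mirkovic--Vilonen result and flag the descent compatibility for $\twext$ as the one place requiring care), but there is no substantive difference in strategy.
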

\begin{proof}
Using the fact that $\Rat$ commutes with $m_*$ (Proposition~\ref{pr:DirectImage}), with $\boxtimes$ (Proposition~\ref{pr:boxtimes}(i)), and with smooth pullbacks (Proposition~\ref{pr:InvImage}), it is easy to see that
\[
    \Rat(m_*(\cF_1\twext\cF_2))=\Rat(\cF_1)\star\Rat(\cF_2),
\]
where the convolution on perverse sheaves is defined in~\cite[(4.2)]{MirkovicVilonen}. To prove (i), since $\Rat$ is conservative (Proposition~\ref{pr:DerRat}), we only need to show that $\Rat(m_*(\cF_1\twext\cF_2))$ is concentrated in cohomological degree zero. This follows from the above equality and~\cite[Prop~4.2]{MirkovicVilonen}. Now (ii) is also clear.
\end{proof}

\begin{lemma}
\[
    \cF_1\star\cF_2\in\MH^{G_\cO}(\Gr_G).
\]
\end{lemma}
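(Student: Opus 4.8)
The plan is to establish two things: that $\cF_1\star\cF_2$ is a pure polarizable Hodge module on $\Gr_G$ of the expected weight, and that it carries a $G_\cO$-equivariant structure. Since $\star$ is $\Q$-bilinear and functorial, we may assume $\cF_i\in\MH^{G_\cO}(\Gr_G,n_i)$ is homogeneous.

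For purity I would trace the weight through the construction of $\cF_1\twext\cF_2$. By Proposition~\ref{pr:TorsorDescent}, $p^\dagger\cF_1\in\MH^{G_{(m)}}(G_\cK/G^{(m)},n_1+d)$ with $d=\dim G_{(m)}$; by Proposition~\ref{pr:boxtimes}(iv) the module $p^\dagger\cF_1\boxtimes\cF_2$ is pure of weight $n_1+d+n_2$; applying Proposition~\ref{pr:TorsorDescent} once more to the $G_{(m)}$-torsor implicit in~\eqref{eq:j} shows that the descended module $\tilde\cF$ is pure of weight $n_1+n_2$, and then $\cF_1\twext\cF_2=(i_{m,l})_*\tilde\cF\in\MH(\Gr_{conv},n_1+n_2)$ because the direct image along a closed embedding preserves weight (Proposition~\ref{pr:KashDb}(ii)). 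As $m$ is ind-proper, Proposition~\ref{pr:DirectImage} gives $m_*(\cF_1\twext\cF_2)\in D^b\MH(\Gr_G,n_1+n_2)$, and by Lemma~\ref{lm:RatMonoidal}\eqref{rat_i} this complex is concentrated in degree $0$; hence $\cF_1\star\cF_2\in\MH(\Gr_G,n_1+n_2)\subset\MH(\Gr_G)$.

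For equivariance, recall that $\Gr_{conv}=G_\cK\times^{G_\cO}\Gr_G$ carries the left action $g\cdot[h,x]=[gh,x]$ of $G_\cK$, hence of $G_\cO$, which acts nicely on $\Gr_{conv}$ (through finite-dimensional quotients on each term of an exhaustion, as in Section~\ref{sect:irr}), and that $m$ is $G_\cO$-equivariant for this action and the action on $\Gr_G$. By Lemma~\ref{lm:EquivPushFrwrd}(i) it therefore suffices to check that $\cF_1\twext\cF_2$ is $G_\cO$-equivariant, since then $m_*(\cF_1\twext\cF_2)\in D^b\MH^{G_\cO}(\Gr_G,n_1+n_2)$ and so its $H^0$ lies in $\MH^{G_\cO}(\Gr_G)$ by definition of $D^b\MH^{G_\cO}$. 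The embedding $i_{m,l}$ of~\eqref{eq:j} is $G_\cO$-equivariant for the left action, so by Lemma~\ref{lm:EquivPushFrwrd}(i) again one reduces to checking that $\tilde\cF$ is $G_\cO$-equivariant on $(G_\cK/G^{(m)})\times^{G_{(m)}}\Gr_G^{(l)}$ for the left action on the first factor. On $(G_\cK/G^{(m)})\times\Gr_G^{(l)}$ the module $p^\dagger\cF_1\boxtimes\cF_2$ is $G_\cO$-equivariant for the left action on the first factor: indeed $p$ is left-$G_\cO$-equivariant and smooth, so $p^\dagger\cF_1$ is $G_\cO$-equivariant by Lemma~\ref{lm:EquivPushFrwrd}(ii), and $\boxtimes$ is compatible with products of group actions. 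This left $G_\cO$-action commutes with the $G_{(m)}$-torsor action used in the descent, so the $G_\cO$-equivariant structure descends along the torsor and equips $\tilde\cF$ with a $G_\cO$-equivariant structure, which finishes the proof.

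The main obstacle, I expect, is not any single step but the bookkeeping of the two commuting actions on $(G_\cK/G^{(m)})\times\Gr_G^{(l)}$ — the genuine left $G_\cO$-action and the $G_{(m)}$-action defining the torsor — and making precise that equivariance for the first descends along the quotient by the second; here one must check that the congruence-subgroup conventions and the niceness of the $G_\cO$-action (Section~\ref{sect:EquivHodge}) are arranged so that the relevant pushforward and descent statements all apply at once. An alternative for the equivariance is to note that $\Rat(\cF_1\star\cF_2)=\Rat(\cF_1)\star\Rat(\cF_2)$ is $G_\cO$-equivariant by~\cite{MirkovicVilonen} and to invoke the classification of simple objects in Proposition~\ref{pr:equiv}; but this appears to require separately upgrading equivariance of the underlying perverse sheaf to the Hodge level (the functor $\Rat$ being faithful but not detecting Hodge-theoretic isomorphism of simple modules), so the direct construction above seems cleaner.
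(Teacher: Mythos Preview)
Your argument is correct and follows the same route as the paper: the paper's proof notes the left $G_\cO$-action on $\Gr_{conv}$ and on $G_\cK/G^{(m)}$, observes that the embedding~\eqref{eq:j} is $G_\cO$-equivariant, invokes Lemma~\ref{lm:EquivPushFrwrd}(ii),(iii) to conclude that $\cF_1\twext\cF_2$ is $G_\cO$-equivariant, and then applies Lemma~\ref{lm:EquivPushFrwrd}(i) to the equivariant map $m$. You are simply more explicit on two points the paper treats as understood --- the weight bookkeeping (which the paper absorbs into the construction preceding the lemma) and the compatibility of the left $G_\cO$-action with the $G_{(m)}$-torsor descent --- but these are elaborations of the same proof, not a different one.
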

\begin{proof}
Let $G_\cO$ act on $\Gr_{conv}$ via left multiplication on $G_\cK$. Let $G_\cO$ act on $G_\cK/G^{(m)}$ similarly. It is clear that~\eqref{eq:j} is $G_\cO$-equivariant. Lemma~\ref{lm:EquivPushFrwrd}(ii,iii) shows that $\cF_1\twext\cF_2$ is $G_\cO$-equivariant. Since $m$ is $G_\cO$-equivariant, the statement follows from Lemma~\ref{lm:EquivPushFrwrd}(i).
\end{proof}

We note that the convolution is compatible with weights, that is,
\[\MH^{G_\cO}(\Gr_G,n_1)\star \MH^{G_\cO}(\Gr_G,n_2)\subset \MH^{G_\cO}(\Gr_G,n_1+n_2).\]

One can define triple convolution along the same lines and use it to show that the convolution is associative up to a canonical isomorphism, that is, the pentagon identities are satisfied (one can also derive the pentagon identities from the faithfulness of $\Rat$).

\subsection{Fusion product and the commutativity constraint} Recall from \cite[Sect.~3.1.1]{GaitsgoryCentral} the following version of Beilinson--Drinfeld Grassmannian $\widetilde\Gr$. This is the ind-scheme classifying the data $(x,\cE,s)$, where $x\in\A^1$, $\cE$ is a $G$-torsor on $\A^1$, and $s$ is a trivialization of $\cE$ over $\A^1-x-0$. (For simplicity, we work with the curve $\A^1$, as it has a global coordinate but we could work with any curve). The ind-scheme $\widetilde\Gr$ is ind-separated and of ind-finite type over $\C$ so we can talk about Hodge modules on $\widetilde\Gr$. We have a morphism $\widetilde\Gr\to\A^1$ given by $(x,\cE,s)\mapsto x$. We use this morphism to view $\widetilde\Gr$ as a family over $\A^1$.

Let $X$ be a connected curve over $\C$ and let $x\in X$ be a smooth closed point of $X$. Recall that the modular interpretation of $\Gr_G$ identifies it with the moduli space of pairs $(\cE,s)$ where $\cE$ is a $G$-torsor over $X$, $s$ is a trivialization of $\cE$ on $X-x$. This identification depends only on a choice of a formal coordinate at $x$. Applying this to $X=\A^1$, $x=0$, we see that the 0-fiber of the family $\widetilde\Gr\to\A^1$ is identified with $\Gr_G$. The well-known factorization property gives a canonical isomorphism (cf.~\cite[Prop.~5]{GaitsgoryCentral}).
\[
    \widetilde\Gr|_{\A^1-0}\xrightarrow{=}
    \Gr_G\times(\A^1-0)\times\Gr_G\colon(x,\cE,s)\mapsto((\cE|_{\A^1-0},s),x,(\cE|_{\A^1-x},s)).
\]
Thus we have an open embedding
\[
    j\colon\Gr_G\times(\A^1-0)\times\Gr_G=\widetilde\Gr|_{\A^1-0}\to\widetilde\Gr
\]
and a closed embedding
\[
    i\colon\Gr_G=\widetilde\Gr|_0\to\widetilde\Gr.
\]

We make a convention. If $Y$ is an ind-scheme over $\A^1$, let $i$ be the closed embedding of the zero fiber, $j$ be the open embedding of the complement. Then for $\cF\in\MH(Y)$ we write
\[
    \cF|_0:=i^*\cF[-1],\quad \cF|_{\A^1-0}:=j^*\cF.
\]
Thus $\cF|_{\A^1-0}$ is again a direct sum of pure Hodge modules, while $\cF|_0$ may, in general, be a complex of mixed Hodge modules. We note that
\begin{equation}\label{eq:resA1}
    \Q_{\A_1}^{\mathrm{Hdg}}|_0=i^*a^*\Q^{\mathrm{Hdg}}=\Q^{\mathrm{Hdg}},
\end{equation}
where $a\colon\A^1\to pt$ is the projection.

Let $\cH_0$ be the unique Hodge structure on $\Q=\Q^1$ of weight zero. For a smooth scheme $Z$ we will denote the Hodge module $a^\dagger\cH_0$ by $\Q^{\mathrm{Hdg}}_Z$, where $a\colon Z\to pt$ is the projection. The following is essentially the Hodge module version of~\cite[Prop.~6(a)]{GaitsgoryCentral}.

\begin{proposition}\label{pr:Fusion} Assume that $\cF_1$ and $\cF_2$ are in $\MH^{G_\cO}(\Gr_G)$. Then we have a canonical isomorphism
\[
    \cF_1\star\cF_2=\bigl(j_{!*}(\cF_1\boxtimes\Q^{\mathrm{Hdg}}_{\A^1-0}\boxtimes\cF_2)\bigr)|_0.
\]
\end{proposition}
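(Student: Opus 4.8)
The plan is to reduce the statement to the analogous fact for perverse sheaves, which is $\cite[\mathrm{Prop.~6(a)}]{GaitsgoryCentral}$, using that $\Rat$ is faithful and conservative (Proposition~\ref{pr:DerRat}) and that all the functors in sight ($j_{!*}$, $\boxtimes$, $i^*$, $m_*$, smooth pullbacks, twisted exterior product) are compatible with $\Rat$. The point is that once we know both sides of the claimed isomorphism are honest pure Hodge modules and that there is a canonical morphism between them realizing the known isomorphism of perverse sheaves after applying $\Rat$, faithfulness forces it to be an isomorphism.

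\emph{Step 1: the right-hand side is a pure Hodge module.} First I would check that $\cF_1\boxtimes\Q^{\mathrm{Hdg}}_{\A^1-0}\boxtimes\cF_2$ lies in $\MH^{G_\cO\times G_\cO}(\widetilde\Gr|_{\A^1-0},n_1+n_2)$ (using Proposition~\ref{pr:boxtimes}(iv) and $\Q^{\mathrm{Hdg}}_{\A^1-0}\in\MH(\A^1-0,0)$); then $j_{!*}$ applied to it is a well-defined pure Hodge module on $\widetilde\Gr$ by Proposition~\ref{pr:IndFinitelyMany}-type reasoning or directly by the intermediate-extension formalism of Section~\ref{sect:conv}'s preceding subsection. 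The subtle part is that $i^*[-1]$ of an intermediate extension is a priori only a complex of \emph{mixed} Hodge modules; so I must argue its cohomology is concentrated in degree zero and pure. For this I invoke that $\Rat$ commutes with $i^*$ and $j_{!*}$ (Lemma~\ref{lm:IntermExtRat}), so $\Rat$ of the right-hand side equals the perverse-sheaf expression $\bigl(j_{!*}(\Rat\cF_1\boxtimes\Q_{\A^1-0}\boxtimes\Rat\cF_2)\bigr)|_0$, which by $\cite[\mathrm{Prop.~6(a)}]{GaitsgoryCentral}$ equals $\Rat\cF_1\star\Rat\cF_2=\Rat(\cF_1\star\cF_2)$, a single perverse sheaf. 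Since $\Rat$ is conservative and exact on $D^b\MH$, concentration in degree zero of $\Rat$ of the right-hand side forces concentration in degree zero of the right-hand side itself; and purity then follows because $j^*$ of it (namely $\cF_1\boxtimes\Q^{\mathrm{Hdg}}\boxtimes\cF_2$, by Lemma~\ref{lm:j*composition}(i)) is pure of weight $n_1+n_2$, so Lemma~\ref{lm:EqualTo!*Mixed} applies after noting $\Rat$ of it has no sub/quotient supported on the zero fiber (this last vanishing is exactly the perverse-sheaf content of the cited proposition).

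\emph{Step 2: producing the canonical morphism.} Here I would mimic Gaitsgory's construction geometrically at the level of Hodge modules. One has the Beilinson--Drinfeld interpolation $\widetilde\Gr$ with a global convolution map, and over $\A^1-0$ the factorization isomorphism identifies the restriction with $\Gr_G\times(\A^1-0)\times\Gr_G$; over $0$ the fiber is the convolution diagram $\Gr_{conv}\to\Gr_G$. Running the direct image of $j_{!*}(\cF_1\boxtimes\Q^{\mathrm{Hdg}}_{\A^1-0}\boxtimes\cF_2)$ along the global convolution morphism $\widetilde\Gr\to\A^1\times\Gr_G$ and restricting to the zero fiber produces, by proper base change (Proposition~\ref{pr:DirectImage} and Lemma~\ref{lm:basechange}), a canonical map to $m_*(\cF_1\twext\cF_2)$; taking $H^0$ gives a canonical morphism to $\cF_1\star\cF_2$, while on the other side the restriction-to-zero of $j_{!*}$ factors through the nearby-cycles-free description because the family is, étale-locally near the relevant closed subschemes, a product (the twisted-product description of $\Gr_{conv}$ in Section~\ref{sect:conv} and the factorization isomorphism). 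Alternatively, and more cheaply, one can simply \emph{define} the map on both sides to be the unique morphism in $\MH^{G_\cO}(\Gr_G)$ that $\Rat$ sends to the canonical perverse-sheaf isomorphism of $\cite[\mathrm{Prop.~6(a)}]{GaitsgoryCentral}$; such a morphism exists and is unique because, by Step~1, both sides are semisimple objects whose $\Hom$-spaces inject into the corresponding $\Hom$-spaces of perverse sheaves via $\Rat$ (Proposition~\ref{pr:HM_basic_properties}\eqref{basic:ii}) and, by Proposition~\ref{pr:equiv}(ii), this injection is in fact the inclusion of Hodge-structure morphisms, which is surjective onto those perverse morphisms induced by maps of the underlying Hodge structures --- and the Gaitsgory isomorphism is of this form since it is $G_\cO$-equivariant and compatible with the extra structure.

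\emph{Step 3: conclude.} With a canonical morphism in hand whose image under $\Rat$ is an isomorphism, conservativity of $\Rat$ (Proposition~\ref{pr:DerRat}) immediately gives that it is an isomorphism of Hodge modules. Finally I would record that the isomorphism is $G_\cO$-equivariant (equivariance is a property by Proposition~\ref{pr:ForgetEquiv}, and $\Rat$ of the map respects equivariant structures, hence so does the map by faithfulness as in Lemma~\ref{lm:ActsThroughQuotient}) and compatible with weights, which is automatic once both sides are identified.

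\emph{Main obstacle.} The genuine difficulty is Step~1: controlling $i^*[-1]$ of an intermediate extension, which lives a priori in the mixed world, and proving it is a single pure Hodge module of the expected weight. Everything rests on transporting the stalk/costalk vanishing from the perverse-sheaf side (the content of $\cite[\mathrm{Prop.~6(a)}]{GaitsgoryCentral}$) through $\Rat$ and then invoking Lemma~\ref{lm:EqualTo!*Mixed}; the careful point is to verify the hypotheses of that lemma --- that $j^*$ of the object on $\widetilde\Gr$ is pure and that the whole $\widetilde\Gr$-object has no sub/quotient supported over $\{0\}$ --- which is where all the factorization geometry is really used. Constructing the canonical map geometrically (rather than cheating via $\Rat$) in Step~2 is the only other place that takes some care, but it is essentially a transcription of the well-known perverse-sheaf argument.
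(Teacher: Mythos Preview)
Your proposal has a genuine gap, and it lies exactly where you flag it: Step~1's purity argument does not go through, and without it Step~2's ``cheap'' lift via $\Rat$ cannot even get started.

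Concentration in degree zero does follow from conservativity of $\Rat$, but purity of $(j_{!*}(\cF_1\boxtimes\Q^{\mathrm{Hdg}}_{\A^1-0}\boxtimes\cF_2))|_0$ does not. Your invocation of Lemma~\ref{lm:EqualTo!*Mixed} is misapplied: that lemma takes a mixed module on the total space and, under hypotheses on its open restriction and on sub/quotients supported on the closed complement, concludes it is the intermediate extension (hence pure). But the object $j_{!*}(\ldots)$ on $\widetilde\Gr$ is already pure by construction, so there is nothing to prove there; and the lemma says nothing about the purity of $i^*[-1]$ of such a module. In general $i^*[-1]$ of a pure module is mixed, and knowing its underlying perverse sheaf is the ``right'' one cannot detect the weight filtration. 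The ``cheap'' route in Step~2 then fails for the same reason: $\Rat$ is faithful but not full, so an isomorphism of perverse sheaves does not lift to a morphism of Hodge modules unless you already know the Hodge structures match; asserting that the Gaitsgory isomorphism ``is of this form'' is exactly the content of the proposition. Your geometric sketch in Step~2 also conflates $\widetilde\Gr$ with the convolution space: the fiber of $\widetilde\Gr$ over $0$ is $\Gr_G$, not $\Gr_{conv}$.

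The paper avoids all of this by never trying to show the right-hand side is pure directly or to lift a perverse isomorphism. Instead it introduces the deformed convolution space $\widetilde\Gr_{conv}$ mapping via $\tilde m$ to $\widetilde\Gr$, with zero fiber $\Gr_{conv}\xrightarrow{m}\Gr_G$, and proves two lemmas. First (Lemma~\ref{lm:TwextOuter}), a direct descent/base-change computation on $\widetilde\Gr_{conv}$ gives a canonical Hodge-module isomorphism $\cF_1\twext\cF_2=(\tilde\jmath_{!*}(\cF_1\boxtimes\Q^{\mathrm{Hdg}}_{\A^1-0}\boxtimes\cF_2))|_0$; this is where the factorization geometry is used, and it produces an honest canonical map with no appeal to $\Rat$. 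Second (Lemma~\ref{lm:OuterConv}), one shows $\tilde m_*\tilde\jmath_{!*}(\ldots)=j_{!*}(\ldots)$ on $\widetilde\Gr$: this is where Lemma~\ref{lm:EqualTo!*} is correctly applied, the hypotheses being checked at the perverse level via the vanishing/nearby-cycle argument you cite. Chaining these with proper base change gives
\[
\cF_1\star\cF_2=m_*(\cF_1\twext\cF_2)=m_*\bigl((\tilde\jmath_{!*}(\ldots))|_0\bigr)=(\tilde m_*\tilde\jmath_{!*}(\ldots))|_0=(j_{!*}(\ldots))|_0,
\]
each step a canonical isomorphism of Hodge modules. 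Purity of the right-hand side then comes for free, since the left-hand side is already known to be pure. The missing idea in your proposal is precisely the passage through $\widetilde\Gr_{conv}$ and the geometric identification of $\cF_1\twext\cF_2$ as a restriction there.
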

\begin{proof}
We introduce the convolution Grassmannian $\widetilde\Gr_{conv}$ classifying the data $(x,\cE_1,\cE_2,s_2,s_{21})$, where $x\in\A^1$, $\cE_i$ are $G$-torsors over $\A^1$, $s_2$ is a trivialization of $\cE_2$ over $\A^1-x$ (viewed as an isomorphism from the trivial torsor to $\cE_2$), $s_{21}$ is an isomorphism from $\cE_2$ to $\cE_1$ over $\A^1-x$. We view $\widetilde\Gr_{conv}$ as a family over $\A^1$. A modular description of $\Gr_{conv}$ identifies $\widetilde\Gr_{conv}|_0$ with $\Gr_{conv}$. More precisely, let $(g,(\cE,s))\in G_\cK\times\Gr_G$ be a representative of a point of $G_\cK\times^{G_\cO}\Gr_G=\Gr_{conv}$. Let $\cE'$ be obtained by gluing the trivial $G$-torsor over $\A^1-0$ with the trivial $G$-torsor on the formal disc centered at 0 via $g$. Let $s'$ be the canonical trivialization of $\cE'$ on $\A^1-0$. Then the embedding $\tilde\imath\colon\Gr_{conv}\to\widetilde\Gr_{conv}$ sends $(g,(\cE,s))$ to $(0,\cE,\cE',s',s)$.

We also have a factorization $\widetilde\Gr_{conv}|_{\A^1-0}=\Gr_G\times(\A^1-0)\times\Gr_G$ given by \[(x,\cE_1,\cE_2,s_2,s_{21})\mapsto((\cE_1|_{\A^1-0},s_{21}|_{\A^1-x-0}\circ s_2|_{\A^1-x-0}),x,(\cE_2,s_2)).\]
We have a morphism $\tilde m\colon\widetilde\Gr_{conv}\to\widetilde\Gr$ sending $(x,\cE_1,\cE_2,s_2,s_{21})$ to $(x,\cE_1,s_{21}|_{\A^1-x-0}\circ s_2|_{\A^1-x-0})$. It is not difficult to see that $\widetilde\Gr_{conv}$ is ind-proper over $\A^1$, so $\tilde m$ is ind-proper. We have a commutative diagram with Cartesian squares
\begin{equation}\label{eq:CD}
\begin{CD}
\Gr_{conv}@>\tilde\imath>>\widetilde\Gr_{conv}@<\tilde\jmath<<\Gr_G\times(\A^1-0)\times\Gr_G\\
@V m VV @V \tilde m VV @|\\
\Gr_G @>i>> \widetilde\Gr @<j<<\Gr_G\times(\A^1-0)\times\Gr_G\\
@VVV @V \pi VV @VVV\\
0 @>>> \A^1 @<<<\A^1-0.
\end{CD}
\end{equation}

Let $\cF'_1\in\MH(\Gr_G\times\A^1)$ and $\cF_2\in\MH(\Gr_G)$. We define the \emph{outer convolution\/} by
\[
    \cF'_1\star_o\cF_2:=\tilde m_*\tilde\jmath_{!*}((\cF'_1|_{\A^1-0})\boxtimes\cF_2)\in D^b\MH(\widetilde\Gr)
\]
(cf.~\cite[Def.~5]{GaitsgorySeminar16}).

\begin{lemma}\label{lm:OuterConv}
Let $\cF'_1\in\MH^{G_\cO}(\Gr_G\times\A^1)$ and $\cF_2\in\MH^{G_\cO}(\Gr_G)$. Then we have
\[
    \cF'_1\star_o\cF_2=j_{!*}((\cF'_1|_{\A^1-0})\boxtimes\cF_2).
\]
In particular, the outer convolution is concentrated in cohomological dimension zero.
\end{lemma}
\begin{proof} 
We claim that
\[
    \tilde m_*\tilde\jmath_{!*}((\cF'_1|_{\A^1-0})\boxtimes\cF_2)=j_{!*}j^*\tilde m_*\tilde\jmath_{!*}((\cF'_1|_{\A^1-0})\boxtimes\cF_2).
\]
By Lemma~\ref{lm:EqualTo!*}, it is enough to prove a similar statement for the underlying perverse sheaves, where it is equivalent to the statement that
\[
    \tilde m_*\tilde\jmath_{!*}\Rat((\cF'_1|_{\A^1-0})\boxtimes\cF_2)
\]
has no vanishing cycles and its nearby cycles have trivial monodromy. This follows from the fact that vanishing and nearby cycles commute with proper direct images (cf.~\cite[p.~7]{GaitsgorySeminar16}).

Thus by base change and Lemma~\ref{lm:j*composition}(i) we have
  \[
    \cF'_1\star_o\cF_2=j_{!*}j^*\tilde m_*\tilde\jmath_{!*}((\cF'_1|_{\A^1-0})\boxtimes\cF_2)=j_{!*}((\cF'_1|_{\A^1-0})\boxtimes\cF_2).
  \]
\end{proof}

Next, we have the following lemma.
\begin{lemma}\label{lm:TwextOuter}
  For $\cF_1,\cF_2\in\MH^{G_\cO}(\Gr_G)$ we have a canonical isomorphism
  \[
    \cF_1\twext\cF_2=(\tilde\jmath_{!*}(\cF_1\boxtimes\Q^{\mathrm{Hdg}}_{\A^1-0}\boxtimes\cF_2)|_0.
  \]
\end{lemma}
\begin{proof}
Let $l$ and $m$ be as in the definition of $\twext$ (Section~\ref{sect:conv}). Consider the closed sub-ind-scheme
$\widetilde{\Gr}_{conv}^{(l)}$ of $\widetilde{\Gr}_{conv}$ consisting of $(x,\cE_1,\cE_2,s_2,s_{21})$ such that $(\cE_2,s_2)\in\Gr_G^{(l)}$. Next, consider the ind-scheme $\widetilde{\Gr}_{conv}^{(l,m)}$ classifying $(x,\cE_1,\cE_2,s_2,s_{21},t_2)$, where $(x,\cE_1,\cE_2,s_2,s_{21})\in\widetilde{\Gr}_{conv}^{(l)}$, $t_2$ is a trivialization of $\cE_2$ on the $m$-th infinitesimal neighborhood of 0. Clearly, this is a $G_{(m)}$-torsor over $\widetilde{\Gr}_{conv}^{(l)}$. One checks that $\widetilde{\Gr}_{conv}^{(l,m)}$ factors as $(G_\cK/G^{(m)})\times\A^1\times\Gr_G^{(l)}$
and we have a diagram with cartesian squares
\[
\begin{CD}
G_\cK/G^{(m)}\times\Gr_G^{(l)}@>>>G_\cK/G^{(m)}\times\A^1\times\Gr_G^{(l)}@<<<
G_\cK/G^{(m)}\times(\A^1-0)\times\Gr_G^{(l)}\\
@V q VV @V \tilde p VV @VVV\\
G_\cK/G^{(m)}\times^{G_{(m)}}\Gr_G^{(l)}@>>>\widetilde\Gr_{conv}^{(l)}
@<<<\Gr_G\times(\A^1-0)\times\Gr_G^{(l)}\\
@VVV @VVV @VVV\\
\Gr_{conv}@>\tilde\imath>>\widetilde\Gr_{conv}@<\tilde\jmath<<\Gr_G\times(\A^1-0)\times\Gr_G\\
@VVV @VVV @VVV\\
0 @>>> \A^1 @<<<\A^1-0.
\end{CD}
\]
We use Kashiwara's equivalence to identify Hodge modules on the ind-schemes in the second row with Hodge modules on the corresponding ind-schemes on the third row. Then the base change Lemma~\ref{lm:basechange}, the compatibility of exterior products with inverse images and intermediate extensions (Proposition~\ref{pr:boxtimes}(iii)), and Lemma~\ref{lm:EqualTo!*} give
\[
    \tilde p^\dagger(\tilde\jmath_{!*}(\cF_1\boxtimes\Q^{\mathrm{Hdg}}_{\A^1-0}\boxtimes\cF_2))=\tilde\cF_1\boxtimes\Q^{\mathrm{Hdg}}_{\A^1}\boxtimes\cF_2,
\]
where $\tilde\cF_1$ is the pullback of $\cF_1$ to $G_\cK/G^{(m)}$. Using the compatibility of $\boxtimes$ with inverse images and~\eqref{eq:resA1}, we get
\[
    q^\dagger\left(\tilde\jmath_{!*}(\cF_1\boxtimes\Q^{\mathrm{Hdg}}_{\A^1-0}\boxtimes\cF_2)|_0\right)=\tilde\cF_1\boxtimes\cF_2.
\]
Since this Hodge module is $G_{(m)}$-equivariant, the statement follows from the descent (Proposition~\ref{pr:TorsorDescent}).
\end{proof}

Now we can prove Proposition~\ref{pr:Fusion}. Using Definition~\ref{def:conv}, Lemma~\ref{lm:TwextOuter}, base change, the definition of the outer product, Lemma~\ref{lm:OuterConv}, and compatibility of $\boxtimes$ with restrictions, we get
\begin{multline*}
    \cF_1\star\cF_2=m_*(\cF_1\twext\cF_2)=m_*((\tilde\jmath_{!*}(\cF_1\boxtimes\Q^{\mathrm{Hdg}}_{\A^1-0}\boxtimes\cF_2)|_0)=\\
    (\tilde m_*\tilde\jmath_{!*}(\cF_1\boxtimes\Q^{\mathrm{Hdg}}_{\A^1-0}\boxtimes\cF_2))|_0=
    ((\cF_1\boxtimes\Q^{\mathrm{Hdg}}_{\A^1})\star_o\cF_2)|_0=(j_{!*}(\cF_1\boxtimes\Q^{\mathrm{Hdg}}_{\A^1-0}\boxtimes\cF_2))|_0.
\end{multline*}
\end{proof}

Now we can define the commutativity constraint. Let $sw$ be the automorphism of $\Gr_G\times(\A^1-0)\times\Gr_G$ switching two copies of $\Gr_G$ and taking $x\in\A^1$ to $-x$. Let $\widetilde{sw}\colon\widetilde\Gr\to\widetilde\Gr$ be the automorphism given by $(x,\cE,s)\mapsto(-x,(sh_{-x})^*\cE,(sh_{-x})^*s)$, where $sh_x\colon\A^1\to\A^1$ is the shift by $-x$. It is easy to see that under the factorization isomorphism we have
\[
    sw=\widetilde{sw}|_{\A^1-0}.
\]
Thus by Proposition~\ref{pr:Fusion}
\begin{multline*}
    \cF_2\star\cF_1=
    \bigl(j_{!*}(\cF_2\boxtimes\Q^{\mathrm{Hdg}}_{\A^1-0}\boxtimes\cF_1)\bigr)\bigl|_0=
    \bigl(j_{!*}sw^*(\cF_1\boxtimes\Q^{\mathrm{Hdg}}_{\A^1-0}\boxtimes\cF_2)\bigr)\bigl|_0=\\
    \bigl(\widetilde{sw}^*j_{!*}(\cF_1\boxtimes\Q^{\mathrm{Hdg}}_{\A^1-0}\boxtimes\cF_2)\bigr)\bigl|_0=
    \bigl(j_{!*}(\cF_1\boxtimes\Q^{\mathrm{Hdg}}_{\A^1-0}\boxtimes\cF_2)\bigr)\bigl|_0=\cF_1\star\cF_2,
\end{multline*}
We used Lemma~\ref{lm:basechange} and the equation $\widetilde{sw}\circ i=i$. It is easy to see that we indeed get a commutativity constraint, that is, the isomorphism is involutive. This commutativity constraint is also easily seen to be compatible with $\Rat$ and the commutativity constraint from~\cite{MirkovicVilonen}. Thus the hexagon identities follow from faithfulness of $\Rat$. 

\subsection{Identity object}
\begin{lemma}
For $\lambda,\mu\in X_+$ and for polarizable pure Hodge structures $\cH$ and $\cH'$ let $l$ be such that $\Gr^\mu$ factors through $\Gr_G^{(l)}$. Let $\Gr^{\lambda,(m)}$ be the preimage of $\Gr^\lambda$ in $G_\cK/G^{(m)}$. Consider the locally closed embedding defined as the composition
\begin{equation}\label{eq:jj}
        j_{\lambda,\mu}:\Gr^{\lambda,(m)}\times^{G_\cO^{(m)}}\Gr^\mu\to(G_\cK/K^{(m)})\times^{G_\cO^{(m)}}\Gr_G^{(l)}=
        G_\cK\times^{G_\cO}\Gr_G^{(l)}\to\Gr_{conv}.
\end{equation}
Define the projection $a_{\lambda,\mu}:\Gr^{\lambda,(m)}\times^{G_\cO^{(m)}}\Gr^\mu\to pt$. Then we have a canonical isomorphism
\begin{equation}
    \IC_\lambda(\cH)\twext\IC_\mu(\cH'):=(j_{\lambda,m})_{!*}a_{\lambda,m}^\dagger(\cH\otimes\cH').
\end{equation}
\end{lemma}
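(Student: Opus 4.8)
The plan is to unwind the definition of $\twext$ from Section~\ref{sect:conv} applied to $\cF_1=\IC_\lambda(\cH)=(j_{\Gr^\lambda})_{!*}a_{\Gr^\lambda}^\dagger\cH$ and $\cF_2=\IC_\mu(\cH')=(j_{\Gr^\mu})_{!*}a_{\Gr^\mu}^\dagger\cH'$, keeping track of how the operations $j_{!*}$, $a^\dagger$ and $\boxtimes$ interact. All the schemes that occur below --- the orbits $\Gr^\lambda,\Gr^\mu$, their preimages in $G_\cK/G^{(m)}$, and the associated bundles --- are smooth, so $\dagger$-pullbacks of Hodge structures are constant variations, i.e.\ honest pure Hodge modules; I will freely use the composition law $(g\circ f)^\dagger=f^\dagger g^\dagger$ and the description of $\dagger$ over smooth bases from Proposition~\ref{pr:InvImage}. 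Write $j'$ for the first arrow and $i_{m,l}$ (as in~\eqref{eq:j}) for the second arrow of the composition defining $j_{\lambda,\mu}$, so that $j_{\lambda,\mu}=i_{m,l}\circ j'$.

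First I would compute $p^\dagger\IC_\lambda(\cH)$ on $G_\cK/G^{(m)}$. Since $p\colon G_\cK/G^{(m)}\to\Gr_G$ is a $G_{(m)}$-torsor, hence smooth and schematic, and $\Gr^{\lambda,(m)}=p^{-1}(\Gr^\lambda)$, the base change for intermediate extensions (Lemma~\ref{lm:basechange}) together with the composition law for $\dagger$ gives $p^\dagger\IC_\lambda(\cH)=(k_\lambda)_{!*}\,a_{\Gr^{\lambda,(m)}}^\dagger\cH$, where $k_\lambda\colon\Gr^{\lambda,(m)}\hookrightarrow G_\cK/G^{(m)}$ is the embedding and $a_{\Gr^{\lambda,(m)}}$ the projection to a point. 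Forming the exterior product with $\IC_\mu(\cH')$ and using that $\boxtimes$ commutes with intermediate extensions (Proposition~\ref{pr:boxtimes}(iii)), with inverse images, and with exterior products of variations of Hodge structure (Section~\ref{sect:InvImage}, Proposition~\ref{pr:boxtimes}(ii), together with $\cH\boxtimes\cH'=\cH\otimes\cH'$ over the point), I obtain on $(G_\cK/G^{(m)})\times\Gr_G^{(l)}$
\[
    p^\dagger\IC_\lambda(\cH)\boxtimes\IC_\mu(\cH')=(k_\lambda\times j_{\Gr^\mu})_{!*}\;a^\dagger(\cH\otimes\cH'),
\]
where now $a$ denotes the projection of $\Gr^{\lambda,(m)}\times\Gr^\mu$ to a point.

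Next I would descend along the $G_{(m)}$-torsor $q\colon(G_\cK/G^{(m)})\times\Gr_G^{(l)}\to(G_\cK/G^{(m)})\times^{G_{(m)}}\Gr_G^{(l)}$. The embedding $k_\lambda\times j_{\Gr^\mu}$ fits into a Cartesian square over $q$ whose other vertical arrow is the $G_{(m)}$-torsor $\Gr^{\lambda,(m)}\times\Gr^\mu\to\Gr^{\lambda,(m)}\times^{G_{(m)}}\Gr^\mu$ and whose bottom arrow is $j'$. Applying Lemma~\ref{lm:basechange} to this square and the composition law for $\dagger$ once more shows that $q^\dagger\bigl(j'_{!*}\,a_{\lambda,\mu}^\dagger(\cH\otimes\cH')\bigr)$ equals the right-hand side of the last display, hence equals $p^\dagger\IC_\lambda(\cH)\boxtimes\IC_\mu(\cH')$. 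Since the Hodge module $\tilde\cF$ of Section~\ref{sect:conv} is, by construction, the $q$-descent of $p^\dagger\IC_\lambda(\cH)\boxtimes\IC_\mu(\cH')$ --- which is unique by Proposition~\ref{pr:TorsorDescent} --- we conclude $\tilde\cF=j'_{!*}\,a_{\lambda,\mu}^\dagger(\cH\otimes\cH')$. Finally $\IC_\lambda(\cH)\twext\IC_\mu(\cH')=(i_{m,l})_*\tilde\cF$, and since $i_{m,l}$ is a closed embedding $(i_{m,l})_*=(i_{m,l})_{!*}$, which composes with $j'_{!*}$ to $(i_{m,l}\circ j')_{!*}=(j_{\lambda,\mu})_{!*}$ by Lemma~\ref{lm:j*composition}(ii),(iii); this yields the asserted isomorphism.

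The step I expect to be the main obstacle is making these identifications precise: one must keep careful track of the shifts and weights hidden in the $\dagger$-pullbacks, and apply Lemma~\ref{lm:basechange} and Proposition~\ref{pr:TorsorDescent} along the torsor $q$, which is of ind-finite but not finite type (so one first restricts, via Kashiwara's equivalence, to the finite-type piece lying over $\Gr_G^{(l)}$), and in a way compatible with the $G_{(m)}$-equivariant structures entering the definition of $\twext$. Once this is done, the argument is a formal diagram chase through the compatibilities established in Section~\ref{sect:BlBox}.
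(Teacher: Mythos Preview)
Your proof is correct and follows exactly the approach the paper indicates: the paper's own proof is a single sentence citing base change (Lemma~\ref{lm:basechange}), the properties of $\boxtimes$ (Proposition~\ref{pr:boxtimes}), and descent for torsors (Proposition~\ref{pr:TorsorDescent}), and your argument is precisely the unwinding of these three ingredients together with Lemma~\ref{lm:j*composition} to compose the intermediate extensions. The only thing to add is that your concern about ind-finite type is handled exactly as you suggest, by working over a fixed $\Gr_G^{(l)}$ via Kashiwara's equivalence.
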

\begin{proof}
  This follows from base change (Lemma~\ref{lm:basechange}), the properties of $\boxtimes$ (Proposition~\ref{pr:boxtimes}), and the descent for torsors (Proposition~\ref{pr:TorsorDescent}).
\end{proof}

\begin{lemma}\label{lm:star}
For $\mu\in X_+$ and for polarizable pure Hodge structures $\cH$ and $\cH'$ we have a canonical isomorphism
\begin{equation}\label{eq:star}
    \IC_\mu(\cH\otimes\cH')=\IC_0(\cH)\star\IC_\mu(\cH').
\end{equation}
\end{lemma}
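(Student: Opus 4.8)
The plan is to unwind the definition of the convolution $\star$ through the twisted product $\twext$, using the formula for $\IC_\lambda(\cH)\twext\IC_\mu(\cH')$ from the preceding lemma, and to exploit that for $\lambda=0$ the relevant sub-ind-scheme of $\Gr_{conv}$ is a ``unit section'' over which the convolution morphism $m$ is an isomorphism.

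Fix $l$ and $m$ as in the definition of $\twext$, so that $\Gr^\mu$ factors through $\Gr_G^{(l)}$ and $G_\cO$ acts on $\Gr_G^{(l)}$ through $G_{(m)}$. The key geometric observation is that $\Gr^{0,(m)}$, the preimage of the base point $\Gr^0\subset\Gr_G$ under $p\colon G_\cK/G^{(m)}\to\Gr_G$, is exactly the fiber $G_\cO/G^{(m)}=G_{(m)}$. Consequently $\Gr^{0,(m)}\times^{G_{(m)}}\Gr^\mu=\Gr^\mu$ canonically, $a_{0,\mu}$ becomes the structure morphism $\Gr^\mu\to pt$, and the locally closed embedding $j_{0,\mu}$ of \eqref{eq:jj} factors canonically as
\[
    \Gr^\mu\hookrightarrow\Gr_G\xrightarrow{s}\Gr_{conv},
\]
where $s\colon\Gr_G\to\Gr_{conv}$, $yG_\cO\mapsto[(1,yG_\cO)]$, is a closed embedding satisfying $m\circ s=\mathrm{id}_{\Gr_G}$.

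Granting this, the rest is formal. The preceding lemma gives $\IC_0(\cH)\twext\IC_\mu(\cH')=(j_{0,\mu})_{!*}a_{0,\mu}^\dagger(\cH\otimes\cH')$; combining the factorization above with Lemma~\ref{lm:j*composition}(ii),(iii) and the definition $\IC_\mu(-)=\IC_{\Gr^\mu}(-)$ from Section~\ref{sect:irr}, this equals $s_*\IC_\mu(\cH\otimes\cH')$. Then Definition~\ref{def:conv}, the compatibility of direct image with composition (Proposition~\ref{pr:DirectImage}(iii), applicable because a closed embedding is ind-proper), and $m\circ s=\mathrm{id}$ give
\[
    \IC_0(\cH)\star\IC_\mu(\cH')=H^0\bigl(m_*s_*\IC_\mu(\cH\otimes\cH')\bigr)=H^0\bigl((m\circ s)_*\IC_\mu(\cH\otimes\cH')\bigr)=\IC_\mu(\cH\otimes\cH'),
\]
the last $H^0$ being harmless since $\IC_\mu(\cH\otimes\cH')$ is a Hodge module, i.e.\ concentrated in degree zero.

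The only real content is the geometric identification $\Gr^{0,(m)}=G_{(m)}$ and the ensuing description of $j_{0,\mu}$ in terms of the unit section $s$; this is the step where one must be careful, in particular to check that every identification used is canonical so that the resulting isomorphism in \eqref{eq:star} is canonical as asserted. I do not expect any further obstacle.
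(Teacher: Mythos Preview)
Your proof is correct and follows essentially the same route as the paper's: your unit section $s$ is exactly the closed embedding the paper denotes $i_0\colon\Gr_G=G_\cO\times^{G_\cO}\Gr_G\to\Gr_{conv}$, and both arguments reduce to the factorization $j_{0,\mu}=i_0\circ j_\mu$ together with $m\circ i_0=\Id_{\Gr_G}$ and the formula for $\twext$ from the preceding lemma. The paper is slightly terser but the logic is identical.
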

\begin{proof}
  Consider the closed embedding
  \[
    i_0\colon\Gr_G=G_\cO\times^{G_\cO}\Gr_G\to G_\cK\times^{G_\cO}\Gr_G=\Gr_{conv}.
  \]
  It is easy to see that when $\lambda=0$ the morphism~\eqref{eq:jj} factors as $j_{0,\mu}=i_0\circ j_\mu$, where $j_\mu\colon\Gr^\mu\to\Gr_G$ is the embedding (note that $\Gr^{0,(m)}\times^{G_{(m)}}\Gr^\mu=\Gr^\mu$ and $a_{0,\mu}=a_\mu$). Thus, according to the previous lemma,
  \[
    \IC_0(\cH)\star\IC_\mu(\cH')=m_*(j_{0,\mu})_{!*}a_\mu^\dagger(\cH\otimes\cH')=
    m_*(i_0)_*(j_\mu)_{!*}a_\mu^\dagger(\cH\otimes\cH')=\IC_\mu(\cH\otimes\cH'),
  \]
  where we used that $m\circ i_0=\Id_{\Gr_G}$.
\end{proof}

It follows from Lemma~\ref{lm:star} and Proposition~\ref{pr:equiv} that the Hodge module $\delta_1:=\IC_0(\cH_0)$ is the identity for the convolution. Thus, we have equipped $\MH^{G_\cO}(\Gr_G)$ with a structure of monoidal category in the sense of~\cite[Def.~1.1]{DeligneMilneTannakian}.

\subsection{Rigidity} Let $\cF$ be an object of $\MH^{G_\cO}(\Gr_G)$. Recall from~\cite[Def.~2.1.1]{KirillovBakalov} that an object $\cF^\vee$ is the left dual of $\cF$ if there are morphisms $e_\cF\colon\cF^\vee\star\cF\to\delta_1$ (called the counit) and $i_\cF\colon\delta_1\to\cF\star\cF^\vee$ (called the unit) such that the composition
\[
    \cF\xrightarrow{i_\cF\otimes\Id_\cF}\cF\star\cF^\vee\star\cF\xrightarrow{\Id_\cF\otimes e_\cF}\cF
\]
is equal to $\Id_\cF$ and the composition
\[
    \cF^\vee\xrightarrow{\Id_{\cF^\vee}\otimes i_\cF}\cF^\vee\star\cF\star\cF^\vee\xrightarrow{\Id_{\cF^\vee}\otimes e_\cF}\cF^\vee
\]
is equal to $\Id_{\cF^\vee}$.
Right dual objects are defined similarly. Our goal is to show that $\MH^{G_\cO}(\Gr_G)$ is a rigid monoidal category, that is, every object has a left and a right dual.

Our first observation is that a monoidal functor obviously preserves duals. The functor $\MH(pt)\to\MH^{G_\cO}(\Gr_G)$ sending $\cH$ to $\IC_0(\cH)$ is easily seen to be monoidal, and the category of pure polarizable Hodge structures is rigid. Thus every object of the form $\IC_0(\cH)$ has duals.

Secondly, for $\lambda\in X_+$ set $\lambda':=w_0(-\lambda)\in X_+$, where $w_0$ is the longest element of the Weyl group of $G$. Set $\cF:=\IC_\lambda(\cH_0)$ and $\cF':=\IC_{\lambda'}(\cH_0)$. We claim that $\cF'(i)$, where $i=\dim\Gr^\lambda$, is the dual of $\cF$.

Indeed, first of all, we have $G_\cO\cdot(t^{-\lambda}\cdot G_\cO)=\Gr^{\lambda'}$. Now it follows from~\cite[(11.10)]{MirkovicVilonen} that we have $\Rat(\cF'(i))\simeq(\Rat(\cF))^\vee$ (note that for perverse sheaves we can trivialize the Tate twists). Next, we claim that the injective map
\begin{equation}\label{eq:RatIso}
    \Hom(\cF'(i)\star\cF,\delta_1)\hookrightarrow\Hom(\Rat(\cF'(i)\star\Rat(\cF),\Rat(\delta_1))
\end{equation}
is an isomorphism. Indeed, since the RHS is equal to $\Hom(\Rat(\cF),\Rat(\cF))$, it is a 1-dimensional vector space over $\Q$, so it is enough to show that the LHS is non-zero. However, we know that $\cF^\vee\star\cF$ is the direct sum of Hodge modules isomorphic to $\IC_\lambda(\cH)$, where $\cH$ are pure polarizable Hodge structures. Applying $\Rat$, and noting once again that the RHS of~\eqref{eq:RatIso} is a 1-dimensional vector space, we see that there is exactly one summand with $\lambda=0$ in this sum, and the rank of the corresponding Hodge structure $\cH$ is one. Thus $\cH=\cH_0(j)$ is Tate's Hodge structure.

It remains to compare weights: we have
\[
    \dim\Gr^{\lambda'}=(2\rho,\lambda')=(w_0^{-1}(2\rho),-\lambda)=(-2\rho,-\lambda)=\dim\Gr^\lambda.
\]
Thus, $\cF'(i)\star\cF$ has weight $\dim\Gr^{\lambda'}-2\dim\Gr^\lambda+\dim\Gr^\lambda=0$, so $j=0$. Thus the LHS of~\eqref{eq:RatIso} is non-zero, so~\eqref{eq:RatIso} is an isomorphism. We define the counit $e_\cF$ as the preimage of the counit for $\Rat(\cF)$ under this isomorphism.

We define the unit $i_\cF$ similarly. It follows from the fact that $\Rat$ is faithful and monoidal that $e_\cF$ and $i_\cF$ satisfy the above conditions for the unit and the counit. The existence of a right dual is proved similarly.

One checks that the dual of $\cF_1\star\cF_2$ is $\cF_2^\vee\star\cF_1^\vee$.
Thus, according to~\eqref{eq:star}, for every $\lambda\in X_+$ and for every pure polarizable Hodge structure $\cH$, the Hodge module $\IC_\lambda(\cH)$ has a dual. Finally, every object is the direct sum of the objects isomorphic to $\IC_\lambda(\cH)$, so every object has a dual and the category is proved to be rigid.

\subsection{Fiber functor}\label{sect:fiber}
We define the fiber functor $\Fib\colon\MH^{G_\cO}(\Gr_G)\to\Vect_\Q$ as the composition of $\Rat$ and the global cohomology functor from~\cite[Sect.~3]{MirkovicVilonen}. According to Proposition~\ref{pr:HM_basic_properties}(ii), Lemma~\ref{lm:RatMonoidal}(ii)), and~\cite[Cor.~3.7, Prop~6.2]{MirkovicVilonen}$\Fib$ is a faithful exact monoidal functor. We have equipped $\MH^{G_\cO}(\Gr_G)$ with a structure of a neutral Tannakian category.

\section{Geometric Satake equivalence for Hodge modules}\label{Sect:proof}
\subsection{Fiber functor to Hodge structures} First of all, we would like to upgrade the fiber functor to a functor to Hodge structures. Let $a$ denote the projection $\Gr_G\to pt$. Consider the functor
\[
   a_+\colon\MH^{G_\cO}(\Gr_G)\to\MH(pt)\colon\cF\mapsto\bigoplus_{i\in\Z}H^{2i}(a_*(\cF))(i).
\]
We note that $a_+$ takes $\MH^{G_\cO}(\Gr_G,n)$ to $\MH(pt,n)$. Here $\MH(pt)$ is the category of pure Hodge structures, $(i)$ stands for the $i$-th Tate twist of a pure Hodge structure. Note that by~\cite[Thm.~3.6]{MirkovicVilonen} the odd direct images vanish, so the composition of $a_+$ with $\Rat$ is the fiber functor defined in Section~\ref{sect:fiber}.

\begin{proposition}\label{pr:a_plus_monoidal}
    The functor $a_+$ is monoidal.
\end{proposition}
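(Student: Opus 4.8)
The plan is to reduce the monoidality of $a_+$ to the fusion description of the convolution product established in Proposition~\ref{pr:Fusion}, combined with the well-known fact that cohomology commutes with the nearby-cycles/restriction-to-zero-fiber operation. First I would record the naturality of the objects involved: since the identity object is $\delta_1 = \IC_0(\cH_0)$ and $a_*(\delta_1) = \cH_0$ placed in degree $0$, the functor $a_+$ is unital in the obvious way, so the real content is the tensor compatibility isomorphism $a_+(\cF_1) \otimes a_+(\cF_2) \xrightarrow{\sim} a_+(\cF_1\star\cF_2)$, functorial in $\cF_1,\cF_2$ and compatible with the associativity and commutativity constraints.

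The key step is the following chain of identifications. Write $\tilde a\colon \widetilde\Gr \to \A^1$ and $b = a\times\mathrm{id}_{\A^1-0}\times a \colon \Gr_G\times(\A^1-0)\times\Gr_G \to \A^1-0$ for the structure maps, and let $\cG := j_{!*}(\cF_1\boxtimes\Q^{\mathrm{Hdg}}_{\A^1-0}\boxtimes\cF_2) \in \MH(\widetilde\Gr)$. By Proposition~\ref{pr:Fusion}, $\cF_1\star\cF_2 = \cG|_0$. Now I would apply $a_*$: pushing forward along $\widetilde\Gr\to\A^1$ and then restricting to the zero fiber, one uses that the (shifted) restriction $(-)|_0 = i^*[-1]$ commutes with proper pushforward (base change along $i\colon 0\hookrightarrow\A^1$), so $a_*(\cG|_0) = (\tilde a_*\cG)|_0$ as complexes of (mixed) Hodge modules on a point. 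On the open part $\A^1-0$, the factorization isomorphism identifies $\cG|_{\A^1-0} = \cF_1\boxtimes\Q^{\mathrm{Hdg}}_{\A^1-0}\boxtimes\cF_2$, so by the Künneth formula (compatibility of $a_*$ with $\boxtimes$, Proposition~\ref{pr:boxtimes} together with the analogous statement for direct images) we get $(\tilde a_*\cG)|_{\A^1-0} = (a_*\cF_1)\boxtimes\Q^{\mathrm{Hdg}}_{\A^1-0}\boxtimes(a_*\cF_2)$, i.e.\ a complex on $\A^1-0$ whose restriction to each point is canonically $a_*(\cF_1)\otimes a_*(\cF_2)$ (with a degree shift from the cohomology of $\A^1-0$ — but here one uses, as in~\cite{MirkovicVilonen}, the parity vanishing of $a_*$ so that this complex is, up to the appropriate normalization, the constant family with fiber $a_*(\cF_1)\otimes a_*(\cF_2)$). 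Since $\tilde a_*\cG = \tilde a_* j_{!*}(\cdots)$ has, by the nearby-cycles argument already used in the proof of Lemma~\ref{lm:OuterConv}, no vanishing cycles and unipotent (in fact trivial) monodromy along $0$, its restriction to $0$ agrees with the nearby cycles, hence with the generic fiber: $(\tilde a_*\cG)|_0 = a_*(\cF_1)\otimes a_*(\cF_2)$. Combining, $a_*(\cF_1\star\cF_2) = a_*(\cF_1)\otimes a_*(\cF_2)$, and after applying the Tate twists in the definition of $a_+$ (which are multiplicative in the evident way, since $(i)\otimes(j) = (i+j)$) this yields the desired isomorphism $a_+(\cF_1\star\cF_2) = a_+(\cF_1)\otimes a_+(\cF_2)$ of pure Hodge structures.

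It remains to check that this isomorphism is associative and commutative, i.e.\ that it upgrades $a_+$ to a symmetric monoidal functor rather than merely a functor respecting the underlying tensor products. For associativity I would use the triple-convolution/triple-fusion Grassmannian over $\A^2$ (as in~\cite{GaitsgoryCentral}) and run the same restriction-to-the-origin argument; the coherence then follows formally. For the commutativity constraint, the point is precisely that the constraint on $\MH^{G_\cO}(\Gr_G)$ was \emph{defined} via $\widetilde{sw}$ and the factorization (Section~\ref{sect:conv} preceding this proposition), and $\tilde a \circ \widetilde{sw} = (x\mapsto -x)\circ\tilde a$, so the swap is intertwined with the sign map on $\A^1$, which acts trivially on the fiber of a constant family; hence the induced isomorphism on $a_+$ is the plain tensor flip. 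Alternatively — and this is the cleanest route — one invokes faithfulness: the composition of $a_+$ with $\Rat$ (forgetting the Hodge structure) is the Mirkovic--Vilonen fiber functor $\Fib$, which is already known to be symmetric monoidal by~\cite[Prop.~6.3]{MirkovicVilonen}, and the comparison isomorphisms constructed above are compatible with $\Rat$ by construction; since the category of pure polarizable Hodge structures $\MH(pt)$ has a faithful fiber functor to $\Vect_\Q$, any diagram of morphisms of Hodge structures that becomes commutative after applying $\Rat$ is already commutative. Thus the coherence diagrams for $a_+$ follow from those for $\Fib$.

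The main obstacle I anticipate is not the construction of the comparison map but the bookkeeping of shifts, Tate twists, and the passage from complexes to Hodge structures: one must be careful that $\cF|_0 = i^*\cF[-1]$ is \emph{a priori} only a complex of \emph{mixed} Hodge modules, that $a_*$ of it lives in $D^b\MHM(pt)$, and that purity is recovered only after taking cohomology and twisting — so the cleanest formulation uses $D^b\MHM$ throughout and only at the end restricts to the pure part, invoking the parity vanishing of~\cite[Thm.~3.6]{MirkovicVilonen} to collapse the cohomological degrees. A secondary subtlety is verifying that the factorization isomorphism and the Künneth isomorphism are genuinely compatible (not merely abstractly isomorphic) so that the resulting constraint is canonical and functorial; this is routine given the properties of $\boxtimes$ collected in Proposition~\ref{pr:boxtimes}, but it is where the bulk of the (omitted) verification lies.
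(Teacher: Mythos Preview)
Your overall strategy matches the paper's exactly: push the fusion Hodge module $j_{!*}(\cF_1\boxtimes\Q^{\mathrm{Hdg}}_{\A^1-0}\boxtimes\cF_2)$ forward along $\pi\colon\widetilde\Gr\to\A^1$, identify the restriction to $\A^1-0$ via K\"unneth, identify the restriction to $0$ via base change and Proposition~\ref{pr:Fusion}, and compare. The paper calls the resulting complex on $\A^1$ by the name $\cG$ and computes the same two restrictions you do.

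The one substantive difference is how the comparison is closed. You assert directly that $\pi_*j_{!*}(\cdots)$ has no vanishing cycles at $0$, so that the special fiber equals the generic one. The paper instead uses only the strict support decomposition~\eqref{eq:support_decomposition} on $\A^1$ to write $\cG$ as a constant part plus a summand $\cG'$ supported at $0$; restricting to $0$ then yields a \emph{split injection} $a_*(\cF_1)\otimes a_*(\cF_2)\hookrightarrow a_*(\cF_1\star\cF_2)$, and the paper concludes it is an isomorphism by comparing graded dimensions, invoking that the Mirkovi\'c--Vilonen fiber functor on $\Perv_\Q^{G_\cO}(\Gr_G)$ is already known to be monoidal. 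Your route is more direct and avoids this dimension count, but the justification ``by the nearby-cycles argument already used in the proof of Lemma~\ref{lm:OuterConv}'' needs one more sentence: that lemma controls vanishing cycles of $\tilde\jmath_{!*}(\cdots)$ on $\widetilde\Gr_{conv}$ (via the local product structure of Lemma~\ref{lm:TwextOuter}), and you must then push forward twice (along $\tilde m$ and then $\pi$) using that vanishing cycles commute with proper direct image. Once that is spelled out your argument is complete, and in fact proves the slightly stronger statement $\cG'=0$. The paper's route, by contrast, uses only soft properties of pure Hodge modules on $\A^1$ and offloads the hard work to~\cite{MirkovicVilonen}. Your handling of coherence via faithfulness of $\Rat$ is correct and more explicit than what the paper writes.
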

\begin{proof}
    Let $\cF_1,\cF_2\in\MH^{G_\cO}(\Gr_G)$, let $\pi$ and $j$ be as in diagram~\eqref{eq:CD}. Set
    \[
        \cG:=\pi_*j_{!*}(\cF_1\boxtimes\Q^{\mathrm{Hdg}}_{\A^1-0}\boxtimes\cF_2)\in D^b\MH(\A^1).
    \]
    \begin{lemma} Let $a\colon\Gr_G\to pt$ be the projection. We have
    \begin{equation}\label{eq:resG}
        \cG|_{\A^1-0}=a_*(\cF_1)\otimes a_*(\cF_2)\boxtimes\Q^{\mathrm{Hdg}}_{\A^1-0}.
    \end{equation}
    \end{lemma}
    \begin{proof}
      Using base change and Lemma~\ref{lm:j*composition}(i), we get
      \[
        \cG|_{\A^1-0}=p_{2,*}(\cF_1\boxtimes\Q^{\mathrm{Hdg}}_{\A^1-0}\boxtimes\cF_2),
      \]
      where $p_2\colon\Gr_G\times(\A^1-0)\times\Gr_G\to\A^1-0$ is the projection. Since $p_2=a\times\Id_{\A^1-0}\times a$, it remains to use the compatibility of $\boxtimes$ with direct images.
    \end{proof}
    Using the base change and Proposition~\ref{pr:Fusion}, we get
    \[
        \cG|_0=
        a_*(j_{!*}(\cF_1\boxtimes\Q^{\mathrm{Hdg}}_{\A^1-0}\boxtimes\cF_2)|_0)=a_*(\cF_1\star\cF_2).
    \]
    It follows from~\eqref{eq:resG},~\eqref{eq:support_decomposition}, and Proposition~\ref{pr:Saito3.21} that $\cG=a_*(\cF_1)\otimes a_*(\cF_2)\boxtimes\Q^{\mathrm{Hdg}}_{\A^1}\oplus\cG'$, where $\cG'\in D^b\MH(\A^1)$ is supported at zero. Thus we get a split injective homomorphism in $D^b\MH(pt)$:
    \[
        a_*(\cF_1)\otimes a_*(\cF_2)\hookrightarrow\cG|_0=a_*(\cF_1\star\cF_2).
    \]
    Applying the cohomology functor and using the K\"unneth decomposition (which follows from Proposition~\ref{pr:KashDb}(i)) we get an injective graded homomorphism
    \[
        a_+(\cF_1)\otimes a_+(\cF_2)\hookrightarrow a_+(\cF_1\star\cF_2).
    \]
    Since the global cohomology functor from~\cite{MirkovicVilonen} is monoidal, the graded dimensions are equal, so this homomorphism is an isomorphism.
\end{proof}

\subsection{The category of Tate's Hodge modules}
Recall that $\cH_0$ stands for the trivial 1-dimensional Hodge structure. The following proposition is crucial for the proofs of Theorems~\ref{th:HodgeSatake},~\ref{th:TateSatake},~and~\ref{th:MixedSatake}.

\begin{proposition}\label{pr:ClosedUnderStar}
Let $\Tate^{G_\cO}(\Gr_G)$ be the strictly full subcategory of $\MH^{G_\cO}(\Gr_G)$ whose objects are isomorphic to the direct sums of $\IC_\lambda(\cH_0(i))$, where $\lambda$ ranges over $X_+$ and $i$ ranges over $\Z$.

(i) For $\cF\in\MH^{G_\cO}(\Gr_G)$ the Hodge structure $a_+(\cF)$ is the direct sum of Tate twists of the trivial Hodge structure if and only if $\cF\in\Tate^{G_\cO}(\Gr_G)$.

(ii) The category $\Tate^{G_\cO}(\Gr_G)$ is closed under the convolution.
\end{proposition}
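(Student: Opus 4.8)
The plan is to prove (i) first and then derive (ii) as an easy consequence. For (i), the "if" direction is immediate: if $\cF\cong\bigoplus\IC_\lambda(\cH_0(i))$, then $a_+(\cF)=\bigoplus a_+(\IC_\lambda(\cH_0(i)))$, and each $a_+(\IC_\lambda(\cH_0(i)))$ is, by the description of $a_*$ of an IC-module and the fact that the cohomology of $\IC_\lambda$ with $\Q$-coefficients is a direct sum of Tate twists (this is part of the Mirkovic--Vilonen computation underlying the fiber functor, together with the fact that $a_+$ just repackages these cohomology groups with their Hodge structures), a direct sum of Tate twists of $\cH_0$. For the "only if" direction, I would use the decomposition from Proposition~\ref{pr:equiv}: write $\cF$ as a direct sum of $\IC_\lambda(\cH)$ with $\cH$ irreducible polarizable Hodge structures, and show that $a_+(\IC_\lambda(\cH))$ contains $\cH$ (up to Tate twist and multiplicity) as a direct summand --- indeed, from Lemma~\ref{lm:star} and Proposition~\ref{pr:a_plus_monoidal}, or more directly from the fact that the lowest nonzero cohomology of $a_*\IC_\lambda(\cH)$ is $\cH$ placed in degree $-\dim\Gr^\lambda$ with an appropriate Tate twist (the stalk at the base point $t^\lambda$, which is an open point of its own $G_\cO$-orbit, contributes $\cH$). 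Since a direct sum of Tate twists of $\cH_0$ has only Tate summands, every $\cH$ appearing must be a Tate twist of $\cH_0$, forcing $\cF\in\Tate^{G_\cO}(\Gr_G)$.

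For (ii), take $\cF_1,\cF_2\in\Tate^{G_\cO}(\Gr_G)$. By Proposition~\ref{pr:a_plus_monoidal}, $a_+(\cF_1\star\cF_2)=a_+(\cF_1)\otimes a_+(\cF_2)$, and the tensor product of two direct sums of Tate twists of $\cH_0$ is again a direct sum of Tate twists of $\cH_0$ (here one uses that $\cH_0(i)\otimes\cH_0(j)=\cH_0(i+j)$ in the category of Hodge structures). Since $\cF_1\star\cF_2\in\MH^{G_\cO}(\Gr_G)$, part~(i) applied to $\cF_1\star\cF_2$ immediately gives $\cF_1\star\cF_2\in\Tate^{G_\cO}(\Gr_G)$, as desired.

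The main obstacle is the "only if" direction of (i): one needs to argue cleanly that for each simple summand $\IC_\lambda(\cH)$ the Hodge structure $\cH$ itself (not merely its underlying vector space) reappears as a summand of $a_+(\IC_\lambda(\cH))$, so that a Tate total cohomology forces $\cH$ to be Tate. The cleanest route is probably to restrict $a_*\IC_\lambda(\cH)$ to the open orbit $\Gr^\lambda$ and identify the top (or rather lowest-degree) cohomology sheaf there, using that on $\Gr^\lambda$ the module $\IC_\lambda(\cH)$ restricts to $a_{\Gr^\lambda}^\dagger\cH$ and that $\Gr^\lambda$ has nontrivial (but purely Tate) cohomology --- so its contribution to $a_+$ is $\cH$ tensored with Tate pieces, and in the bottom degree it is $\cH$ up to a single Tate twist; a weight or semisimplicity argument (the decomposition theorem splits off this summand) then isolates $\cH$ as an actual Hodge-structure summand of $a_+(\IC_\lambda(\cH))$. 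Everything else is formal once Proposition~\ref{pr:a_plus_monoidal}, Proposition~\ref{pr:equiv}, and Lemma~\ref{lm:star} are in hand.
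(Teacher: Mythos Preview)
Your argument for (ii) is exactly the paper's. The issue is in (i): you have the two directions reversed in difficulty.

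The direction you call ``immediate'' --- that $\cF\in\Tate^{G_\cO}(\Gr_G)$ forces $a_+(\cF)$ to be a sum of Tate twists --- is the substantive one. Mirkovi\'c--Vilonen compute the underlying $\Q$-vector space of $H^*(\Gr_G,\IC_\lambda)$, not a Hodge structure; there is no Hodge structure on a perverse sheaf. The assertion that $a_+(\IC_\lambda(\cH_0))$ is Tate \emph{as a Hodge structure} is exactly the content of this direction, and your phrase ``$a_+$ just repackages these cohomology groups with their Hodge structures'' assumes what must be shown. The paper handles this by passing to the standard object $\cF_\lambda:=H^0((j_\lambda)_!a_\lambda^\dagger\Q^{\mathrm{Hdg}})$, upgrading the Mirkovi\'c--Vilonen weight-functor decomposition to mixed Hodge modules so that $H^i(a_*\cF_\lambda)$ is identified with a direct sum of top compactly supported cohomology groups $H_c^{2\rho(\nu+\lambda)}(S_\nu\cap\Gr^\lambda,\Q^{\mathrm{Hdg}})$, invoking the general fact that the top compactly supported cohomology of any variety is Tate, and finally using that $\IC_\lambda(\cH_0)$ is a direct summand of an associated graded piece of the weight filtration of $\cF_\lambda$.

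Conversely, the direction you single out as the ``main obstacle'' is the easy one in the paper: from $a_+(\IC_\lambda(\cH))=\cH\otimes a_+(\IC_\lambda(\cH_0))$ (Lemma~\ref{lm:star} plus Proposition~\ref{pr:a_plus_monoidal}) and faithfulness of $a_+$, one only needs that $\cH\otimes V$ Tate with $V\neq0$ pure forces $\cH$ to be concentrated in a single Hodge bidegree, hence Tate. Your route of extracting $\cH$ as an honest summand via this same formula secretly uses that $a_+(\IC_\lambda(\cH_0))$ already has a one-dimensional (hence Tate) summand, i.e.\ the ``if'' direction; your alternate route via the bottom cohomology $H^{-\dim\Gr^\lambda}(a_*\IC_\lambda(\cH))\cong\cH$ is valid and independent, but that is not where the real work lies.
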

\begin{proof}
    (i) Using~\eqref{eq:star} and Lemma~\ref{pr:a_plus_monoidal}, we see that
    \[
        a_+(\IC_\lambda(\cH))=a_+(\IC_0(\cH)\star\IC_\lambda(\cH_0))=\cH\otimes a_+(\IC_\lambda(\cH_0)).
    \]
    Now the `only if' direction follows easily from Proposition~\ref{pr:equiv} and faithfulness of $a_+$.

    For the `if' direction, let $j_\lambda\colon\Gr^\lambda\to\Gr$ be the locally closed embedding and $a_\lambda\colon\Gr^\lambda\to pt$ be the constant morphism. Consider the mixed Hodge module $\cF_\lambda:=H^0((j_{\lambda})_!a_\lambda^\dagger\Q^{\mathrm{Hdg}})\in\MHM(\Gr_G)$. Recall that $a\colon\Gr_G\to pt$ is the constant morphism. We need a lemma.
\begin{lemma}\label{lm:StandardSheaf}
     For $i\in\Z$ we have $H^{2i+1}(a_*\cF_\lambda)=0$ and $H^{2i}(a_*\cF_\lambda)$ is a Tate twist of a trivial Hodge structure.
\end{lemma}
\begin{proof}
  Let $N\subset G$ be the unipotent radical of the Borel subgroup $B$. Let $\nu\in X_*$ be a co-character and let $S_\nu:=N_\cK\cdot(t^\nu G_\cO)$ be a semi-infinite Schubert cell (here $t^\nu$ is as in Section~\ref{sect:irr}). Let $\rho$ be the half sum of positive roots of $G$ with respect to $B$. Recall from~\cite[Thm.~3.2]{MirkovicVilonen} that the intersection $S_\nu\cap\Gr^\lambda$ is of pure dimension $\rho(\nu+\lambda)$ as long as it is non-empty.

  Let $a_{\nu,\lambda}\colon S_\nu\cap\Gr^\lambda\to pt$ be the projection to the point. Consider the top degree cohomology with compact support
  \[
        H_c^{2\rho(\nu+\lambda)}(S_\nu\cap\Gr^\lambda,\Q^{\mathrm{Hdg}}):=
        H^{2\rho(\nu+\lambda)}((a_{\nu,\lambda})_!a_{\nu,\lambda}^\dagger\Q^{\mathrm{Hdg}})
        \in\MHM(pt).
  \]
  Repeating the arguments of~\cite[Sect.~3]{MirkovicVilonen} (see especially Prop.~3.10 in loc.~cit.), we identify $H^i(a_*\cF_\lambda)$ with
  \[
    \bigoplus_{\nu\in X_*,2\rho(\nu)=i}H_c^{2\rho(\nu+\lambda)}(S_\nu\cap\Gr^\lambda,\Q).
  \]
  In particular, we see that $H^i(a_*\cF_\lambda)=0$ for odd $i$.

  It remains to show that $H_c^{2\rho(\nu+\lambda)}(S_\nu\cap\Gr^\lambda,\Q)$ is a Tate twist of a trivial Hodge structure. In fact, the top cohomology with compact support is always Tate. One way to show this is to reduce to the case when the scheme is irreducible and to note that the top cohomology is 1-dimensional in this case. 
\end{proof}
    We return to the proof of Proposition~\ref{pr:ClosedUnderStar}. Since $a_*=a_!$ preserves weights, we have $\gr^0(a_*\cF_\lambda)=a_*(\gr^0(\cF_\lambda))$. Now the previous lemma implies that $a_*(\gr^0(\cF_\lambda))$ is the direct sum of Tate twists of trivial Hodge structures.

    Note that $\gr^0\cF_\lambda\in\MH^0(\Gr_G)$ and it follows from semisimplicity that $\IC_\lambda(\cH_0)$ is
    its direct summand. Thus $a_*\IC_\lambda(\cH_0)$ is a direct summand of $a_*(\gr^0(\cF_\lambda))$ and the claim follows.

(ii) Since $a_+$ is monoidal (Proposition~\ref{pr:a_plus_monoidal}), part~(ii) follows from part~(i).
\end{proof}

\subsection{Proof of Theorem~\ref{th:HodgeSatake}}
We will construct an equivalence
\[
    \MH^{G_\cO}(\Gr_G)[\sqrt T]=\Perv_\Q(\Gr_G)\boxtimes\MH(pt)[\sqrt T].
\]
The theorem will follow by applying the usual geometric Satake equivalence $\Perv_\Q(\Gr_G)=\Rep\Q(\check G_\Q)$ and taking the even components of the both sides.

Consider the functor $F\colon\Perv_\Q(\Gr_G)\to\MH^{G_\cO}(\Gr_G)[\sqrt T]$ sending $\IC_\lambda$ to $\IC_\lambda(\cH_0)^+(\frac12\dim\Gr^\lambda)$. We claim that this is a monoidal functor. Indeed, take $\lambda,\mu\in X_*$.  By Proposition~\ref{pr:ClosedUnderStar}, $\IC_\lambda(\cH_0)(\frac12\dim\Gr^\lambda)\star\IC_\mu(\cH_0)(\frac12\dim\Gr^\mu)$ is the direct sum of objects of the form $\IC_\nu(\cH_0)(i)$, where $\nu\in X_*$ and $i\in\frac12\Z$. However, this object has weight zero, and the statement follows.

Next, we have an obvious embedding $G\colon\MH(pt)[\sqrt T]\to\MH^{G_\cO}(\Gr_G)[\sqrt T]$. We claim that $F\boxtimes G$ is an equivalence of categories. According to~\eqref{eq:star} and the commutativity constraints, we have
\[
    \IC_\lambda(\cH)^+=\IC_\lambda(\cH_0)^+\left(\frac12\dim\Gr^\lambda\right)\star
    \IC_0(\cH)^+\left(-\frac12\dim\Gr^\lambda\right).
\]
Similarly,
\[
    \IC_\lambda(\cH)^-=\IC_\lambda(\cH_0)^+\left(\frac12\dim\Gr^\lambda\right)\star
    \IC_0(\cH)^-\left(-\frac12\dim\Gr^\lambda\right).
\]
We see that by Proposition~\ref{pr:equiv}(i), the functor is essentially surjective. The fact that the functor is fully faithful follows from the definitions and Proposition~\ref{pr:equiv}(ii). \qed

\subsection{Proof of Theorem~\ref{th:TateSatake}}
Consider the functor $F\colon\Perv_\Q(\Gr_G)\to\Tate^{G_\cO}(\Gr_G)[\sqrt T]$ sending $\IC_\lambda$ to $\IC_\lambda(\cH_0)^+(\frac12\dim\Gr^\lambda)$. As in the proof of Theorem~\ref{th:HodgeSatake}, we check that this functor is monoidal.

Consider the category $\Rep\gm\Q$. Its irreducible objects are in a natural bijection with $\Z$, and we equip the category with the $\Z/2\Z$ parity grading. Consider the functor
\[
    G\colon\Rep\gm\Q\to\Tate^{G_\cO}(\Gr_G)[\sqrt T]
\]
sending the representation with weight $i\in\Z$ to $\IC(\cH_0)^+(i/2)$. As in the proof of Theorem~\ref{th:HodgeSatake}, we check that $F\boxtimes G$ is an equivalence of categories. Thus, using the usual geometric Satake, we get
\[
    \Tate^{G_\cO}(\Gr_G)[\sqrt T]=\Perv_\Q(\Gr_G)\boxtimes\Rep_\Q\gm\Q=\Rep_\Q(\check G_\Q\times\gm\Q).
\]
We note that the $\Z/2\Z$-grading on this category corresponds to a central embedding $\mu_2\to\check G_\Q\times\gm\Q$. It is easy to see that this embedding is exactly $2\rho\times\iota$. The theorem follows. \qed

\section{Mixed Hodge modules}\label{sect:Mixed} In this Section we sketch a proof of Theorem~\ref{th:MixedSatake}. In Section~\ref{sect:DerHodge} we defined the category $\MHM(Y)$ of mixed Hodge modules on an ind-scheme $Y$ with an exact faithful functor $\MHM(Y)\to\Perv_\Q(Y)$. We note that we have smooth descents for mixed Hodge modules, proof being the same as that of Proposition~\ref{pr:SmoothDescent}.

\subsection{Equivariant mixed Hodge modules} Let, as in Section~\ref{sect:EquivHodge}, $H$ be a connected pro-algebraic group acting nicely on an ind-scheme $Y$. In this case, using smooth descent for morphisms of mixed Hodge modules, we can define the category $\MHM^H(Y)$ of equivariant mixed Hodge modules. Similarly to Proposition~\ref{pr:ForgetEquiv}, we show that the forgetful functor $\MHM^H(Y)\to\MHM(Y)$ is fully faithful and we identify $\MHM^H(Y)$ with a full subcategory of $\MHM(Y)$. Finally, we have an analogue of Lemma~\ref{lm:EquivPushFrwrd}.

Recall that an object of $\MHM(Y)$ has a weight filtration. It follows from~\cite[Lemma~2.25]{SaitoMixed} that for an object in $\MHM^H(Y)$ this is a filtration by equivariant submodules so that the associated graded is in $\MH^H(Y)$.

\subsection{$G_\cO$-equivariant mixed Hodge modules on $\Gr_G$} By the above, we can consider the category $\MHM^{G_\cO}(\Gr_G)$. We want to describe the objects. Using the weight filtration and Proposition~\ref{pr:equiv}, we see that the simple objects are pure Hodge modules of the form $\IC_\lambda(\cH)$, where $\lambda\in X_+$ and $\cH$ is an irreducible Hodge structure.

The definition of $\IC_\lambda(\cH)$, given in Section~\ref{sect:irr}, generalizes to the mixed Hodge structures $\cH$ as follows: $\IC_\lambda(\cH):=\cH\otimes\IC_\lambda(\cH_0)$. (The original definition cannot be used because we do not define intermediate extensions for mixed Hodge modules.)

\begin{proposition}\label{pr:MHM_DirectSum}
Every object of $\MHM^{G_\cO}(\Gr_G)$ can be uniquely written as the direct sum of subobjects isomorphic to $\IC_\lambda(\cH)$ for some distinct co-characters $\lambda\in X_+$ and some mixed Hodge structures $\cH$.
\end{proposition}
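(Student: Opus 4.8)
The plan is to reduce the statement to a combination of the weight filtration on mixed Hodge modules and the semisimplicity results already available for pure equivariant Hodge modules. First I would fix $\cF\in\MHM^{G_\cO}(\Gr_G)$ and consider its weight filtration $W_\bullet\cF$; by the remark preceding this proposition (an equivariant consequence of \cite[Lemma~2.25]{SaitoMixed}) each $W_k\cF$ is equivariant and each graded piece $\gr^W_k\cF$ lies in $\MH^{G_\cO}(\Gr_G,k)$. By Proposition~\ref{pr:equiv}, each $\gr^W_k\cF$ is a direct sum of objects $\IC_\lambda(\cH)$ with $\cH$ a pure Hodge structure of the appropriate weight. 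The key point is then to show that the weight filtration on $\cF$ \emph{splits}, i.e. that $\cF\simeq\bigoplus_k\gr^W_k\cF$; granting this, collecting, for each $\lambda\in X_+$, all summands supported on $\overline{\Gr^\lambda}$ and using Proposition~\ref{pr:equiv}(ii) together with $\IC_\lambda(\cH)=\cH\otimes\IC_\lambda(\cH_0)$ lets us package them as a single $\IC_\lambda(\cH_\lambda)$ for a (now possibly mixed, but in fact constructed as a direct sum of pure) Hodge structure $\cH_\lambda$, giving existence. Uniqueness follows because $\Hom(\IC_\lambda(\cH),\IC_\mu(\cH'))=0$ for $\lambda\ne\mu$ (different strict supports, via $\Rat$ and Proposition~\ref{pr:equiv}) and $\Hom(\IC_\lambda(\cH),\IC_\lambda(\cH'))=\Hom(\cH,\cH')$.

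The splitting of the weight filtration is the main obstacle, and I would handle it by an $\Ext^1$-vanishing argument. It suffices to show that for pure equivariant Hodge modules $\cF'$, $\cF''$ of weights $k$, $k'$ with $k'<k$ one has $\Ext^1_{\MHM^{G_\cO}(\Gr_G)}(\cF',\cF'')=0$; then every short exact sequence in the weight filtration splits equivariantly, and an induction on the length of the filtration finishes the job. Since the forgetful functor to $\MHM(\Gr_G)$ is fully faithful (the equivariant analogue of Proposition~\ref{pr:ForgetEquiv}), and since all the objects in sight are supported on a fixed $\overline{\Gr^\lambda}$ for $\cF'$ and $\overline{\Gr^\mu}$ for $\cF''$, I would reduce by Kashiwara's equivalence and the support decomposition to computing $\Ext^1$ between $\IC_\lambda(\cH_0)$-type objects tensored with Hodge structures. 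For $\lambda\ne\mu$ the relevant $\Ext^1$ vanishes because the two IC-modules have disjoint (or nested) strict supports and $\IC$-extensions between simple perverse sheaves with distinct strict supports that would be needed here are controlled by $\Rat$; for $\lambda=\mu$ one has $\IC_\lambda(\cH)=\cH\otimes\IC_\lambda(\cH_0)$, so $\Ext^1(\IC_\lambda(\cH),\IC_\lambda(\cH'))=\Ext^1_{\MHM(pt)}(\cH,\cH')$, which vanishes when $\cH$, $\cH'$ are \emph{pure of different weights} (a standard fact: $\Ext^1$ in $\MHM(pt)$ between pure Hodge structures of weights $k'$, $k$ vanishes unless $k-k'$ is $1$ or more and, crucially, vanishes when $k'<k$ it does \emph{not} — so I must be careful here).

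Let me instead organize the argument so that the potentially nonzero $\Ext^1_{\MHM(pt)}$ does no harm. The correct statement to prove is not that \emph{every} weight-filtered object splits but that an object of $\MHM^{G_\cO}(\Gr_G)$ whose graded pieces are, on each stratum, sums of $a_\lambda^\dagger(\text{const. VHS})$ is itself such a sum; equivalently, the category $\MHM^{G_\cO}(\Gr_G)$ is equivalent, via the support decomposition and $j_\lambda$-restriction, to $\bigoplus_\lambda$(equivariant mixed Hodge modules on $\Gr^\lambda$), and an equivariant mixed Hodge module on a single $G_\cO$-orbit $\Gr^\lambda$ is, since the orbit is simply connected up to the action and $G_\cO$ is connected, a \emph{constant} mixed Hodge module $a_\lambda^\dagger\cH$ for a single mixed Hodge structure $\cH$ — with no extension problem at all, because on the orbit the module is lisse and $G_\cO$-equivariance with $G_\cO$ connected forces it to be pulled back from the point. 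So the real content is: (1) the support-stratification of $\MHM^{G_\cO}(\Gr_G)$ into orbit-by-orbit contributions, which follows as in Proposition~\ref{pr:IndFinitelyMany} from $\Rat$-faithfulness, Kashiwara's equivalence, and the fact that $j_\lambda^*$ of an equivariant module is equivariant hence constant; (2) reassembling a module from its constant restrictions $\cH_\lambda$ to the orbits via intermediate extension of the associated graded and then observing the mixed structure is forced to be the direct sum because the clean gluing that would produce a nontrivial extension is obstructed by equivariance and by $\Rat$ (at the level of perverse sheaves the $G_\cO$-equivariant derived category on $\Gr_G$ is semisimple). I expect step (2) — making precise that no nontrivial equivariant mixed extensions between distinct-$\lambda$ strata exist — to be the delicate point, and I would prove it exactly as in the proof of Proposition~\ref{pr:IndFinitelyMany}(i): if $\cF$ were indecomposable but not simple, a weight-filtration step would give a submodule or quotient supported on a smaller union of strata, and playing off $\Rat$ (semisimplicity of $\Perv^{G_\cO}_\Q(\Gr_G)$) against faithfulness of the forgetful functor yields a contradiction, exactly the mechanism already used for the pure case.
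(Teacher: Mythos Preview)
Your overall shape is right: once you know $\Ext^1_{\MHM^{G_\cO}(\Gr_G)}(\IC_\lambda(\cH),\IC_\mu(\cH'))=0$ for $\lambda\ne\mu$, the decomposition follows, with the $\lambda$-piece absorbing all the mixed structure into a single $\cH_\lambda$. You also correctly diagnose that splitting the weight filtration directly is hopeless, since already on a point $\Ext^1_{\MHM(pt)}(\Q(0),\Q(1))\ne0$.

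The gap is in your final mechanism for the $\lambda\ne\mu$ case. You propose to deduce the $\Ext^1$-vanishing in $\MHM^{G_\cO}(\Gr_G)$ from the semisimplicity of $\Perv_\Q^{G_\cO}(\Gr_G)$ together with faithfulness of $\Rat$. But a faithful exact functor need not be injective on $\Ext^1$: the same example $\Rat\colon\MHM(pt)\to\Vect_\Q$ sends a nonzero $\Ext^1$ to zero. So ``playing off $\Rat$ against faithfulness'' cannot by itself rule out a nonsplit mixed extension whose underlying perverse extension happens to split.

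What the paper actually uses is the \emph{parity vanishing} on the affine Grassmannian: for $\Gr^\mu\subset\overline{\Gr^\lambda}$ with $\mu\ne\lambda$ and $j\colon\Gr^\mu\hookrightarrow\overline{\Gr^\lambda}$, one has $j^*\IC_\lambda\in{}^pD^{\le-2}$, not merely ${}^pD^{\le-1}$. This is a statement about \emph{which cohomology objects vanish}, and that is something the exact conservative functor $\Rat$ \emph{does} detect; so the same amplitude bound holds for $j^*\IC_\lambda(\cH)$ in $D^b\MHM$. From there a standard adjunction argument gives $\Ext^1(\IC_\lambda(\cH),\IC_\mu(\cH'))=0$ in $\MHM$. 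The parity bound is a genuine geometric input specific to $\Gr_G$ (the reference is~\cite[Sect.~2.1.3]{GaitsgoryCentral}), not a formal consequence of having ind-finitely many orbits; your proposal never invokes it, and without it the argument does not close.
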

\begin{proof}
Let $\lambda,\mu\in X_+$, $\lambda\ne\mu$, let $\cH$ and $\cH'$ be mixed Hodge structures. It is enough to show that
$\Ext^1(\IC_\lambda(\cH),\IC_\mu(\cH'))=0$. Without loss of generality we may assume that $\Gr^\mu\subset\overline\Gr^\lambda$. Let $j\colon\Gr^\mu\to\overline\Gr^\lambda$ be the locally closed embedding. It is enough to show that $j^*\IC_\lambda(\cH)$ lives in cohomological degrees $\le-2$. Since $\Rat$ is exact and faithful, it is enough to show that $j^*\IC_\lambda$ lives in perverse cohomological degrees $\le-2$. This is explained in \cite[Sect.~2.1.3]{GaitsgoryCentral}.
\end{proof}

We define the convolution of mixed Hodge modules similarly to the convolution of pure Hodge modules.

The constructions of rigidity, commutativity constraint, unit object, and the fiber functor are the same as in the pure case with minor modifications, thus $\MHM^{G_\cO}(\Gr_G)$ becomes a neutral Tannakian category.

Finally, as in the proof of Theorem~\ref{th:HodgeSatake} we construct an equivalence of categories.
\[
    \MHM^{G_\cO}(\Gr_G)[\sqrt T]=\Perv_\Q(\Gr_G)\boxtimes\MHM(pt)[\sqrt T].
\]
and the theorem follows. \qed

\bibliographystyle{../../alphanum}
\bibliography{../../RF}
\end{document}